\numberwithin{equation}{section}
\newtheorem{thm}{Theorem}
\newtheorem{prop}{Proposition}[section]
\newtheorem{claim}{Claim}[section]
\newcommand{\be}{\begin{equation*}}
\newcommand{\ee}{\end{equation*}}
\newcommand{\beq}{\begin{equation}}
\newcommand{\eeq}{\end{equation}}
\newcommand{\begincal}{\begin{eqnarray*}}
\newcommand{\fincal}{\end{eqnarray*}}
\newcommand{\ds}{\displaystyle}
\newcommand{\eps}{\varepsilon}
\newcommand{\xia}{x_{i,\alpha}}
\newcommand{\xja}{x_{j,\alpha}}
\newcommand{\xa}{x_\alpha}
\newcommand{\tmua}{\tilde{\mu}_\alpha}
\newcommand{\ma}{\mu_\alpha}
\newcommand{\tua}{\tilde{u}_\alpha}
\newcommand{\tha}{\tilde{h}_\alpha}
\newcommand{\tfa}{\tilde{f}_\alpha}
\newcommand{\taa}{\tilde{a}_\alpha}
\newcommand{\mua}{\mu_\alpha}
\newcommand{\hha}{\hat{h}_\alpha}
\newcommand{\hfa}{\hat{f}_\alpha}
\newcommand{\haa}{\hat{a}_\alpha}
\newcommand{\hWa}{\hat{W}_\alpha}
\newcommand{\hXa}{\hat{X}_\alpha}
\newcommand{\hYa}{\hat{Y}_\alpha}
\newcommand{\tya}{\tilde{y}_\alpha}
\newcommand{\va}{v_\alpha}
\newcommand{\hpa}{\hat{\rho}_\alpha}
\newcommand{\tva}{\tilde{v}_\alpha}
\newcommand{\cva}{\check{v}_\alpha}
\newcommand{\cWa}{\check{W}_\alpha}
\newcommand{\cVa}{\check{V}_\alpha}
\newcommand{\cZa}{\check{Z}_\alpha}
\newcommand{\cXa}{\check{X}_\alpha}
\newcommand{\cYa}{\check{Y}_\alpha}
\newcommand{\cha}{\check{h}_\alpha}
\newcommand{\cfa}{\check{f}_\alpha}
\newcommand{\caa}{\check{a}_\alpha}
\newcommand{\cv}{\check{v}}
\newcommand{\tWa}{\tilde{W}_\alpha}
\newcommand{\Dx}{\overrightarrow{\Delta}_\xi}
\newcommand{\Dh}{\overrightarrow{\Delta}_h}
\newcommand{\Sp}{{\mathbb S}^3}
\newcommand{\RR}{{\mathbb R}}
\begin{document}
\title[The Einstein-Lichnerowicz constraints system]
{Stability of the Einstein-Lichnerowicz constraints system} 

\author{Olivier Druet } 
\address{Olivier Druet, CNRS UMR 5208, Institut Camille Jordan, Universit\'e de Lyon 1, France.}
\email{druet@math.univ-lyon1.fr}

\author{Bruno Premoselli } 
\address{Bruno Premoselli, Universit\'e de Cergy-Pontoise 
D\'epartement de Math\'ematiques 
2 avenue Adolphe Chauvin 
95302 Cergy-Pontoise cedex 
France}
\email{bruno.premoselli@u-cergy.fr}

\date{May 2014}

\begin{abstract} 
We study the Einstein-Lichnerowicz constraints system,
obtained through the conformal method when addressing the initial data
problem for the Einstein equations in a scalar field theory. We prove that this
system is stable with respect to the physics data when posed on the 
standard $3$-sphere. 
\end{abstract}

\maketitle 

\section{Introduction}

The Einstein field equations are a set of 10 equations in the theory of relativity which relate the geometry of a space-time to the distribution of matter and energy. Given a space-time $(\mathcal{M},\tilde g)$ -- namely a $(3+1)$-dimensional Lorentzian manifold -- they are written as
\begin{equation} \label{ee} \tag{E}
\textrm{Ric}(\tilde g)_{ab} - \frac{1}{2} R(\tilde g) \tilde g_{ab} = T_{ab},
\end{equation}
where $\mathrm{Ric}(\tilde g)$ is the Ricci tensor of $\tilde g$, $R(\tilde g)$ is the scalar curvature of $\tilde g$ and $T$ is the stress-energy tensor which depends on the model used to represent the distribution of matter and energy. In the scalar field setting, the stress-energy tensor takes the form:
\begin{equation} \label{T}
T_{ab} = \nabla_a \psi \nabla_b \psi - \left(\frac{1}{2} |\nabla \psi |_{\tilde g}^2 + V(\psi) \right) {\tilde g}_{ab},
\end{equation}
where the scalar-field $\psi$ is a smooth function in $\mathcal{M}$, $V$ - the potential associated to $\psi$ - is a smooth function in $\RR$ and $\psi$ and $V$ satisfy:
\begin{equation} \label{divnulle} 
 \nabla^i \nabla_i \psi = \frac{dV}{d \psi},
 \end{equation}
where $\nabla^i \nabla_i \psi$ is the laplacian for the Lorentzian metric $\tilde g$, so that there holds $\nabla_i T^{ij} = 0$. 
When looking for solutions of \eqref{ee}, in order to avoid pathological examples from a physical point of view, the theory restricts itself to a class of space-times where the Einstein equations can be seen as an evolution problem from a given initial data. Following standard terminology, 
an initial data for \eqref{ee} consists of $(M,h,K,\psi_0,\psi_1)$ where $(M,h)$ is a $3$-dimensional Riemannian manifold, $K$ is a smooth $(2,0)$-tensor and $\psi_0$ and $\psi_1$ are smooth functions in $M$ satisfying the following \emph{constraint equations} in $M$:
\begin{equation} \label{c}  
\left \{
\begin{aligned}
& R(h) + (\mathrm{tr}_h K)^2 - || K ||_h^2 = \psi_1^2 + |\nabla \psi_0|_h^2 + 2 V(\psi_0) \\
& \nabla^j K_{ij} - \nabla_i (\mathrm{tr}_h K)  = \psi_1 \nabla_i \psi_0 .
\end{aligned}
\right.
\end{equation}
Then, an initial data set $(M,h,K,\psi_0,\psi_1)$ is said to admit a globally hyperbolic space-time development if there exists a Lorentzian manifold $(\mathcal{M},\tilde g)$, a smooth function $\psi$ in $\mathcal{M}$ and an embedding $i : M \to \mathcal{M}$ such that \eqref{ee} and \eqref{divnulle} are satisfied, such that $i(M)$ is a Cauchy hypersurface 
of $\mathcal{M}$ and such that $i^{*} \tilde g = h$, $i^* \mathcal{K} = K$, $\psi \circ i = \psi_0$ and $(N \cdot \psi )\circ i = \psi_1$, where $\mathcal{K}$ is the second fundamental form of $i(M)$ in $\mathcal{M}$ and $N$ is the future directed timelike unit normal to $i(M)$. Straightforward application of the Gauss and Codazzi equation shows that a necessary condition for a globally hyperbolic space-time $(\mathcal{M}, \tilde g)$ with a scalar-field $\psi$ to solve \eqref{ee} and \eqref{divnulle} is that the system \eqref{c} be satisfied on a Cauchy hypersurface of $\mathcal{M}$. Since the works of Choquet-Bruhat \cite{ChoBru} and Choquet-Bruhat and Geroch \cite{ChoGe}, it is known that \eqref{c} is also a sufficient condition on an initial data set $(M, h, K, \psi_0, \psi_1)$ to admit a maximal globally hyperbolic development (MGHD). This object provides a framework for the main open conjectures in general relativity and for the analysis of solutions of \eqref{ee}. 

\medskip The constraint equations \eqref{c} are a highly underdetermined system of $4$ equations for $14$ unknowns. To overcome this problem, the conformal method, initiated by Lichnerowicz \cite{Lich}, freezes some of the unknown variables -- considering them from now on as parameters -- and proposes to solve the system for the remaining set of variables. In 
this process, one looks for the unknown metric $h$ in the conformal class of a reference Riemannian metric $g$ in $M$. 
We consider the case of closed manifolds in what follows, where closed means compact without boundary. We let $(M,g)$ be a closed Riemannian 
$3$-manifold. By the conformal method, in order to obtain a solution of \eqref{c} in $M$ it is enough to solve the following elliptic system of equations, where the unknown $(\varphi,W)$ are a smooth positive function on $M$ and a smooth $1$-form on $M$:
\begin{equation} \label{cc} \tag{$C_F$} 
\left \{ 
\begin{aligned} 
&    \triangle_g \varphi + \mathcal{R}_\psi  \varphi  = \mathcal{B}_{\tau, \psi, V} \varphi^{5} + \frac{\mathcal{A}_{\pi, U}(W)}{ \varphi^{7}}~,  \\ 
 & \overrightarrow{\triangle_{g}} W  = - \frac{2}{3}\varphi^{6} \nabla\tau - \pi\nabla \psi~,   
\end{aligned}
\right.
\end{equation}
where
\begin{equation} \label{expressions}
\begin{aligned}
& \mathcal{R}_{\psi}  = \frac{1}{8} \left( R(g) - |\nabla \psi|_g^2 \right)~~,~~\mathcal{B}_{\tau,\psi,V}  = \frac{1}{8} \left( 2 V(\psi) - \frac{2}{3} \tau^2 \right), \\
& \mathcal{A}_{\pi,U}(W)  = \frac{1}{8} \left(  |U + \mathcal{L}_g W |_g^2 + \pi^2 \right)~~,\\
\end{aligned} \end{equation}
$R(g)$ is the scalar curvature of $(M,g)$ and $\mathcal{L}_g W$ -- the conformal Killing operator -- is a $(2,0)$-tensor defined in coordinates by:
\begin{equation} \label{conflie}
  \mathcal{L}_gW_{ij} = W_{i,j} + W_{j,i} - \frac{2}{3} \left(\mathrm{div}_g W\right) g_{ij}\hskip.1cm. 
\end{equation}
Also, in \eqref{cc}, $\triangle_g = - \mathrm{div}_g (\nabla \cdot)$ is the Laplace-Beltrami operator and we have let:
\[\overrightarrow{\triangle_{g}} W = - \mathrm{div}_g (\mathcal{L}_g W)\]
for any $1$-form $W$. Note that in \eqref{cc} $6$ is the critical exponent for the embedding of the Sobolev space $H^1(M)$ into Lebesgue spaces. The initial data in \eqref{cc} are $F = (\tau, \psi, \pi, U)$, the unknowns are $\varphi$ and $W$. 
We let $\mathcal{F}$ be the initial data set
\begin{equation} \label{initialdata}
 \mathcal{F} = \left\{(\tau, \psi, \pi, U)~,~\tau, \psi, \pi \in C^\infty(M), U \in T_{(2,0)}(M)\right\},
 \end{equation}
where $T_{(2,0)}(M)$ denotes the set of smooth symmetric traceless and divergence-free $(2,0)$-tensors in $M$. We endow $\mathcal{F}$ with the following norm: 
for $F = (\tau, \psi, \pi, U )\in \mathcal{F}$,
\begin{equation} \label{Fnorm}
 \Vert F \Vert_\mathcal{F} = \Vert \tau \Vert_{C^2(M)} + \Vert \psi \Vert_{C^1(M)} + \Vert \pi \Vert_{C^0(M)} + \Vert U \Vert_{C^0(M)} .
 \end{equation}
Given an initial data $F = (\tau, \psi, \pi, U)$ and a solution $(\varphi, W)$ of \eqref{cc} one obtains an initial data set $(M,h,K,\psi_0,\psi_1)$ solution of the original constraint equations \eqref{c} by letting
\begin{equation} \label{dataparam}
(h,K,\psi_0, \psi_1) = (\varphi^4g, \frac{\tau}{3}\varphi^4g + \varphi^{-2}(U + \mathcal{L}_g W), \psi, \varphi^{- 6} \pi). 
\end{equation}
Conversely, starting from a solution of \eqref{c} one obtains a solution $(\varphi, W)$ of \eqref{cc} for suitably defined parameters $F = (\tau, \psi, \pi, U)$ according to \eqref{dataparam}. Note that then $\tau$ turns out to be the mean curvature of the embedding of $M$ in its globally hyperbolic development.

\medskip Different physical setting are roughly distinguished by the sign of the coefficient $\mathcal{B}_{\tau, \psi, V}$. In the vacuum case, $\mathcal{B}_{\tau, \psi, V}$ is nonpositive while it becomes positive if we allow a cosmological constant $\Lambda$ (corresponding to the case $V(\psi) \equiv \Lambda$) and $\tau$ is not too big with respect to this constant. The $\mathcal{B}_{\tau, \psi, V}$ positive case has received increasing attention in recent years with the attempts to use scalar-field theories to model the observed acceleration of the expansion of the universe, see Rendall \cite{Rendall}. Several existence results for \eqref{cc} are known. When $M$ is closed these are of different nature according to the sign of $\mathcal{B}_{\tau, \psi, V}$ and the case $\mathcal{B}_{\tau, \psi, V} > 0$ turns out to exhibit a rich behavior where solutions are not necessarily unique as shown in Premoselli \cite{Premoselli2} or Holst - Meier \cite{HolstMeier}. If $\nabla \tau = 0$ (in which case the system \eqref{cc} is decoupled), existence results are in Isenberg \cite{Ise} for the $\mathcal{B}_{\tau, \psi, V} \le 0$ case and in Hebey-Pacard-Pollack \cite{HePaPo} for the $\max_M \mathcal{B}_{\tau, \psi, V} > 0$ case. If $\tau$ is not constant, existence results are in Dahl-Gicquaud-Humbert \cite{DaGiHu} and the references therein if $\mathcal{B}_{\tau, \psi, V} \le 0$ and Premoselli \cite{Premoselli1} if $\mathcal{B}_{\tau, \psi, V} > 0$. 
As \eqref{dataparam} shows, solving the system \eqref{cc} provides solutions of \eqref{c} parameterized by some initial data in $\mathcal{F}$. We are in this work interested in the stability of the system \eqref{cc} with respect to perturbations both of the initial data in 
$\mathcal{F}$ and of the potentials $V$ in $C^1\left(\RR\right)$. We fully answer the question in the positive case 
of the round $3$-sphere.

\begin{thm} \label{Th1}
Let $(M,g) = (\mathbb{S}^3, h)$ be the standard round $3$-sphere.  Let $F_0 = (\tau, \psi, \pi, U) \in \mathcal{F}$ and 
$V_0 \in C^1\left(\mathbb R\right)$ be such that $\mathcal{B}_{\tau, \psi, V_0} > 0$ in $\mathbb{S}^3$, $\triangle_{h} +  \mathcal{R}_{\psi}$ is coercive and 
$\pi \not \equiv 0$. For any sequence $\left(F_\alpha\right)$, $F_\alpha = \left(\tau_\alpha, \psi_\alpha, \pi_\alpha, U_\alpha\right)$, 
of initial data in 
$\mathcal{F}$, any sequence $(V_\alpha)$ 
of potentials in $C^1\left(\RR\right)$, such that
$$\Vert F_\alpha - F_0 \Vert_{\mathcal{F}} + \Vert V_\alpha-V \Vert_{C^1\left(\RR\right)} \to 0$$
as $\alpha \to \infty$, where $\Vert \cdot \Vert_\mathcal{F}$ is as in \eqref{Fnorm}, and any sequence 
$\left(\varphi_\alpha, W_\alpha\right)$ of solutions of the conformal constraint 
system \eqref{cc} with $F = F_\alpha$ and $V = V_\alpha$, there exists a solution $(\varphi_0, W_0)$ of the limit system \eqref{cc} with $F = F_0$ 
and $V = V_0$ such that, up to a subsequence, $\left(\varphi_\alpha, W_\alpha\right)$ converges to $(\varphi_0, W_0)$ in $C^{1, \theta}(M)$ as $\alpha\to +\infty$, where $0 < \theta < 1$ is arbitrary.
\end{thm}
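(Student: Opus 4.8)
The plan is to argue by contradiction and to regard the control of $W_\alpha$ as slaved to that of $\varphi_\alpha$ through the momentum constraint, the second equation of \eqref{cc}.

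\textbf{Step 1: reduction to an $L^\infty$-bound.} I would first show that it suffices to bound $\varphi_\alpha$ away from $0$ and from $+\infty$ in $L^\infty(M)$. The uniform lower bound $\varphi_\alpha\ge c_0>0$ is where $\pi\not\equiv0$ and the coercivity of $\triangle_h+\mathcal{R}_\psi$ first enter: working at an interior minimum point of $\varphi_\alpha$, where $\triangle_g\varphi_\alpha\le 0$, and using $\mathcal{A}_{\pi_\alpha,U_\alpha}(W_\alpha)\ge\frac18\pi_\alpha^2$ together with coercivity forces $\min_M\varphi_\alpha$ to stay away from $0$ along the sequence. Granted $c_0\le\varphi_\alpha\le C$, the right-hand side of the first equation is bounded in $L^\infty$, elliptic regularity gives $\varphi_\alpha$ bounded in $C^{1,\theta}$, the second equation then gives $W_\alpha$ bounded in $C^{1,\theta}$, and a standard bootstrap together with Arzel\`a--Ascoli produces a subsequence converging in $C^{1,\theta}$ to some $(\varphi_0,W_0)$; passing to the limit in \eqref{cc} identifies $(\varphi_0,W_0)$ as a solution with $(F_0,V_0)$. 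So the whole problem is to exclude $\sup_M\varphi_\alpha\to+\infty$.

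\textbf{Step 2: blow-up analysis.} Assume $\sup_M\varphi_\alpha\to+\infty$. After controlling $W_\alpha$ by $\varphi_\alpha$ via the second equation, the first equation takes the form of a perturbed critical equation, $\triangle_g\varphi_\alpha+\mathcal{R}_{\psi_\alpha}\varphi_\alpha=\mathcal{B}_{\tau_\alpha,\psi_\alpha,V_\alpha}\varphi_\alpha^5+\mathcal{A}_\alpha(W_\alpha)\varphi_\alpha^{-7}$ with a nonnegative remainder. I would then run the blow-up machinery for critical elliptic equations in dimension $3$: a Struwe-type $H^1$-decomposition of $\varphi_\alpha$ into a weak limit $\varphi_0$ (which is $\ge c_0$) plus a finite sum of bubbles concentrating at points $x_1,\dots,x_k$, followed by the sharp pointwise (Druet--Hebey--Robert type) estimates $\varphi_\alpha(x)\lesssim\sum_i\mu_{i,\alpha}^{1/2}\big(\mu_{i,\alpha}^2+d_g(x,x_{i,\alpha})^2\big)^{-1/2}$ away from the weak limit. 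Under the natural rescaling $y\mapsto\mu_{i,\alpha}^{1/2}\varphi_\alpha(\exp_{x_{i,\alpha}}(\mu_{i,\alpha}y))$ with $\mu_{i,\alpha}=\varphi_\alpha(x_{i,\alpha})^{-2}$ the exponent $5$ is exactly scale-invariant while both $\mathcal{R}_{\psi_\alpha}\varphi_\alpha$ and $\mathcal{A}_\alpha(W_\alpha)\varphi_\alpha^{-7}$ disappear in the limit; since $\mathcal{B}_{\tau,\psi,V_0}>0$, each bubble is, by Caffarelli--Gidas--Spruck, a rescaled standard solution of $\triangle_{\mathrm{euc}}U=\mathcal{B}_{\tau,\psi,V_0}(x_i)U^5$ on $\R^3$, which on $\mathbb{S}^3$ is, up to a conformal diffeomorphism, an extremal of the Yamabe quotient; the local geometry of blow-up is thus completely explicit. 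The goal of the rest of the proof is to show that no bubble can actually occur, whence $\varphi_\alpha\to\varphi_0$ strongly, hence in $C^{1,\theta}$, and $(\varphi_0,W_0)$ solves \eqref{cc} with $(F_0,V_0)$.

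\textbf{Step 3: the Pohozaev obstruction.} To reach a contradiction I would combine two ingredients. First a global Kazdan--Warner--Bourguignon--Ezin identity, obtained by testing the first equation against the conformal vector fields $\nabla x_j$ of $\mathbb{S}^3$ (gradients of the first spherical harmonics): in the limit, the contribution of the weak limit $\varphi_0$ cancels out — it is precisely the Kazdan--Warner identity for the would-be solution $\varphi_0$ — leaving $\sum_i m_i\,\langle\nabla\mathcal{B}_{\tau,\psi,V_0}(x_i),\nabla x_j(x_i)\rangle=0$ for all $j$, where $m_i>0$ is the mass of the $i$-th bubble; since the $\nabla x_j(x_i)$ span $T_{x_i}\mathbb{S}^3$, every blow-up point must be a critical point of $\mathcal{B}_{\tau,\psi,V_0}$. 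Second, the refined local Pohozaev--Pucci--Serrin identity on small (possibly $\alpha$-dependent) geodesic balls about each $x_{i,\alpha}$, evaluated through the sharp pointwise estimates: the critical term contributes only $-\frac16\int(x\cdot\nabla\mathcal{B})\varphi_\alpha^6$, now negligible; the singular term, whose exponent $-7$ is exactly the conformally natural value for which no ``bulk mass'' term appears in dimension $3$, contributes $\frac16\int(x\cdot\nabla\mathcal{A}_\alpha(W_\alpha))\varphi_\alpha^{-6}$; and the comparison of these bulk terms with the boundary flux of a bubble forces, on $\mathbb{S}^3$ where the bubble's mass term vanishes, an identity that the genuine presence of the singular term — i.e. $\pi\not\equiv0$, which moreover keeps $W_0$ and hence $\mathcal{A}(W_0)$ free of obstructive singularities at the $x_i$ — together with the coercivity of $\triangle_h+\mathcal{R}_\psi$ make impossible. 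This closes the argument.

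\textbf{Main difficulty.} The hard part is squarely Steps 2 and 3: establishing the sharp pointwise and $H^1$ estimates for the \emph{coupled} system in dimension $3$ — tracking logarithmic corrections and the neck regions between a bubble and the weak limit, and controlling $W_\alpha$, in particular $\mathcal{A}_\alpha(W_\alpha)$, both near the concentration points (where $\varphi_\alpha^6$ behaves like a spike and $\nabla\tau$ is \emph{not} assumed small) and in the limit — and then carrying out the Pohozaev and Kazdan--Warner bookkeeping precisely enough that every error is shown to be $o(1)$ and the surviving terms have the sign that excludes blow-up. Essentially all the work lies there.
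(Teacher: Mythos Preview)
Your broad three-step outline (uniform lower bound, reduce to excluding $\sup\varphi_\alpha\to+\infty$, blow-up analysis and a Pohozaev-type obstruction) matches the paper's architecture, but the mechanisms you propose at each step differ from what the paper actually does, and in Step~3 there is a genuine conceptual gap.

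\medskip
\textbf{Differences in approach.} For the lower bound you invoke the minimum principle; this fails as stated, since $\pi_\alpha$ may well vanish at the minimum point of $\varphi_\alpha$, and then $\mathcal{A}_{\pi_\alpha,U_\alpha}(W_\alpha)\ge\tfrac18\pi_\alpha^2$ gives nothing. The paper instead uses the Green representation for $\triangle_h+h_\alpha$ together with the pointwise lower bound $f_\alpha u^5+b_\alpha u^{-7}\ge c\,b_\alpha^{5/12}f_\alpha^{-7/12}$, which is global and only needs $\pi\not\equiv0$ somewhere. For the blow-up description you propose a Struwe $H^1$-decomposition plus Druet--Hebey--Robert pointwise estimates; the paper bypasses $H^1$ entirely and works pointwise from the start, selecting critical points $x_{1,\alpha},\dots,x_{N_\alpha,\alpha}$ and proving directly a weak estimate $d_h(x,\mathcal{S}_\alpha)^3\bigl(u_\alpha^6+|\mathcal{L}_hW_\alpha|\bigr)\le C$ that already couples the two unknowns. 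The final contradiction in the paper is \emph{not} a Kazdan--Warner constraint on the location of blow-up points followed by a sign argument: there is no Kazdan--Warner step at all. Instead the local Pohozaev identity is used to show that around each isolated concentration point the solution is asymptotic to a single exact bubble all the way out to the scale set by the nearest other concentration point; the contradiction then comes from a bubble-interaction argument (two bubbles at mutual distance $d_\alpha\to0$ cannot both satisfy the Green-function lower bound), together with Caffarelli--Gidas--Spruck forbidding two critical points of the limit profile.

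\medskip
\textbf{The gap.} Your Step~3 misidentifies what drives the Pohozaev obstruction. You suggest that the singular term $\mathcal{A}_\alpha(W_\alpha)\varphi_\alpha^{-7}$, through $\pi\not\equiv0$, produces the decisive sign. In the paper it is exactly the opposite: the entire point of the hardest estimate (the sharp pointwise bound $|\mathcal{L}_\xi\hat W_\alpha|(x)\le C\mu_\alpha^2 r_\alpha^{-3}(\mu_\alpha^2+|x|^2)^{-1}$) is to show that the contribution of $\mathcal{A}_\alpha\varphi_\alpha^{-7}$ to the Pohozaev balance is \emph{negligible}. The condition $\pi\not\equiv0$ is used only for the uniform lower bound on $\varphi_\alpha$; it plays no role in the local obstruction. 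What you are missing is precisely this control of $\mathcal{L}_hW_\alpha$ near a concentration point: since the source term $\varphi_\alpha^6\nabla\tau_\alpha$ in the second equation carries mass $\sim\mu_\alpha^{-3}$ concentrating at scale $\mu_\alpha$, a naive elliptic estimate gives only $|\mathcal{L}_hW_\alpha|\lesssim(\mu_\alpha^2+|x|^2)^{-1}$, and then $|\mathcal{L}_hW_\alpha|^2\varphi_\alpha^{-7}$ is far too large both for the Harnack step and for the Pohozaev identity. The paper closes this by a bootstrap on the Lam\'e Green's representation, comparing $\hat W_\alpha$ to an explicit model field $V_\alpha$ and proving along the way that $|X_\alpha(x_\alpha)|=O(\mu_\alpha^2 r_\alpha^{-3})$; this coupled estimate is the core of the analysis and has no counterpart in your proposal. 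Without it, neither the sharp pointwise description of $\varphi_\alpha$ nor the Pohozaev computation can be carried out.
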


Theorem \ref{Th1} is of course a compactness result. Sequences of solutions of small perturbations of the constraint system \eqref{cc} with respect to the initial data 
and the potential converge smoothly to a solution of the unperturbed system. In particular, 
small perturbations of the initial data do not create sequences of solutions far away from the set of solutions of the original system. 

\medskip When combined with local well-posedness results for quasilinear hyperbolic systems, Theorem \ref{Th1} provides a stability result on spacetime developments with respect to the physics data in small time in the sense that the spacetime development corresponding to a perturbation of the initial scalar-field data $( \tau, \psi, \pi, U)$ will be close in strong sense, for small times, to a spacetime development of the original 
problem. In particular small numerical errors in $(\tau, \psi, \pi, U)$ do not affect the solutions of \eqref{ee} for small time.

\medskip The regularity of convergence of solutions in Theorem \ref{Th1} depends on the convergence of the initial data. We stated it under reasonable assumptions on the convergence of the $F_\alpha$'s and $V_\alpha$'s,  but due to its elliptic features, the system \eqref{cc} is regularizing: if the perturbed initial data as well as the potentials converge in $C^{k, \theta}$, $k \ge 2$, $0 < \theta <1$, then the solutions converge in $C^{k+1,\theta'}(M)$ for all $\theta' < \theta$.

\medskip
We state Theorem \ref{Th1} in the model case of $\mathbb{S}^3$ in order to slightly simplify the presentation of the proof but our arguments easily extend to the general case of a $3$-dimensional locally conformally flat manifold. Also note that the only relevant assumptions in Theorem \ref{Th1} are $\mathcal{B}_{\tau, \psi, V_0} > 0$ and 
$\pi \not \equiv 0$. Assuming that $\mathcal{B}_{\tau, \psi, V_0} > 0$, 
the coercivity of $\triangle_{g_s} + \mathcal{R}_{\psi}$ is a necessary condition for the existence of solutions of the scalar equation of \eqref{cc}. 

\medskip The proof of Theorem \ref{Th1} goes through the proof of an involved stability result concerning a slightly wider class of systems than \eqref{cc}, see \eqref{equationEalpha} in section \ref{sectionstatement}  below. Section \ref{sectionproof} contains the arguments in the proof of Theorem \ref{Th1} and uses the pointwise description of sequences of blowing-up solutions of \eqref{cc} around a concentration point. Such a pointwise description is obtained in section \ref{sectionlba} and is the core of the analysis of the paper. It requires a simultaneous investigation of the defects of compactness that may occur in each of the two equations of \eqref{cc}. Finally, sections 6 to 9 gather some technical results used throughout the paper.

\medskip {\bf Acknowledgements} - The two authors wish to thank Emmanuel Hebey for fruitful discussions during the preparation of this paper.

\section{A PDE result}\label{sectionstatement}

We shall in fact prove a more general result than Theorem \ref{Th1}. We consider a sequence $\left(u_\alpha,W_\alpha\right)$ of solutions on $\left(S^3,h\right)$ of 
\begin{equation}\label{equationEalpha}
\left\{\begin{array}{l}
\,\\{\ds \Delta_h u_\alpha + h_\alpha u_\alpha = f_\alpha u_\alpha^5 + \frac{a_\alpha}{u_\alpha^7}}\\
\,\\
{\ds \overrightarrow{\Delta_h} W_\alpha = u_\alpha^{6}X_\alpha +Y_\alpha}\\
\,\\
\end{array}\right. 
\end{equation}
where 
$$a_\alpha = \left\vert U_\alpha + {\mathcal L}_h W_\alpha\right\vert^2 + b_\alpha$$
and $\left(h_\alpha, f_\alpha, b_\alpha\right)$ are smooth functions with $f_\alpha>0$, $b_\alpha>0$, $X_\alpha$ and $Y_\alpha$ are smooth $1$-forms and $U_\alpha$ is a smooth traceless divergence free $(2,0)$-tensor field. We assume that 
\begin{equation}\label{eqconv}
\left(h_\alpha, f_\alpha, X_\alpha, Y_\alpha, U_\alpha, b_\alpha\right)\to \left(h_0, f_0, X_0, Y_0, U_0, b_0\right)\hbox{ as }\alpha\to +\infty
\end{equation}
where all the convergences take place in $C^0$, except the convergences of $X_\alpha$ and $f_\alpha$ to $X_0$ and $f_0$ which take place in $C^1$ with $f_0>0$. We assume moreover that $h_0$ is such that $\Delta_h + h_0$ is coercive and that 
\begin{equation}\label{eqnonzero}
b_0\not\equiv 0\hskip.1cm.
\end{equation}
In the following, we may assume that $W_\alpha$ is orthogonal to the set of Killing $1$-forms on $S^3$ (see proposition \ref{Killingsphere} for a classification of these Killing $1$-forms). Indeed, the only quantity entering into the equations is ${\mathcal L}_h W_\alpha$ so that the system is completely invariant under the addition of a Killing $1$-form to $W_\alpha$.

\medskip Then we have the following result~:

\begin{thm}\label{thm2}
Such a sequence $\left(u_\alpha,W_\alpha\right)$ is uniformly bounded in $C^{1,\eta}$ for all $\eta>0$ and, after passing to a subsequence, $\left(u_\alpha,W_\alpha\right)\to \left(u_0,W_0\right)$ in $C^{1,\eta}$ where $\left(u_0,W_0\right)$ is a solution of the limiting system.
\end{thm}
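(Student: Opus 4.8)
The plan is to argue by contradiction: suppose $(u_\alpha, W_\alpha)$ is \emph{not} uniformly bounded in $L^\infty$, so that $\Vert u_\alpha\Vert_{L^\infty}\to +\infty$ along a subsequence. The strategy is a blow-up analysis of the scalar equation, coupled with elliptic estimates on the vector equation. First I would establish, via standard elliptic regularity applied to the second equation of \eqref{equationEalpha}, that once $u_\alpha$ is controlled in $L^p$ for large $p$ then $\mathcal{L}_h W_\alpha$ is controlled in $C^{0,\eta}$, so $a_\alpha$ is bounded in $C^{0,\eta}$; conversely, any blow-up of $u_\alpha$ feeds into the $u_\alpha^6 X_\alpha$ source term and must be tracked. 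The key point is that the negative power $u_\alpha^{-7}$ in the scalar equation forces $u_\alpha$ to stay \emph{uniformly bounded below} away from any concentration point (a positivity/Harnack argument), and near a concentration point the $a_\alpha/u_\alpha^7$ term becomes negligible, so the blow-up profile should be governed purely by the critical term $f_\alpha u_\alpha^5$, i.e.\ by the Yamabe-type equation on $\R^3$.

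The main technical input — which the excerpt tells us is proved in Section \ref{sectionlba} and is ``the core of the analysis'' — is a sharp pointwise (C^0) description of the blowing-up sequence: there exist concentration points $x_{i,\alpha}$ and scales $\mu_{i,\alpha}\to 0$ such that $u_\alpha$ is, up to a controlled error, a sum of rescaled standard bubbles $U_{\mu_{i,\alpha}, x_{i,\alpha}}$ plus a bounded part, with a pointwise estimate of the form $u_\alpha(x) \le C\sum_i \mu_{i,\alpha}^{1/2}/(\mu_{i,\alpha} + d_h(x, x_{i,\alpha}))$ away from the other concentration points. Granting this pointwise control, I would then derive a contradiction from a Pohozaev-type identity: integrating the scalar equation against a Pohozaev multiplier $\langle x - x_{i,\alpha}, \nabla u_\alpha\rangle$ on a small geodesic ball around a concentration point, the critical term $f_\alpha u_\alpha^5$ contributes a boundary term which, by conformal flatness of $\Sp$ and the bubble profile, carries a definite sign, while the subcritical terms $h_\alpha u_\alpha$, the $a_\alpha u_\alpha^{-7}$ term, and the error terms are shown to be of lower order. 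Here is precisely where the hypotheses enter: coercivity of $\Delta_h + h_0$ controls the linear term and prevents the bounded part from degenerating, while $b_0\not\equiv 0$ (hence $a_\alpha$ bounded away from $0$ somewhere) guarantees the $u_\alpha^{-7}$ term genuinely interacts — it is what makes the Pohozaev balance impossible, since on $\Sp$ the sign of the leading term cannot be compensated.

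Once $L^\infty$ boundedness of $u_\alpha$ is established, the rest is routine bootstrapping: the lower bound $u_\alpha \ge c > 0$ (again from the $u_\alpha^{-7}$ term, using the maximum principle and coercivity) makes $a_\alpha/u_\alpha^7$ smooth and bounded; elliptic $L^p$ theory on the vector equation gives $W_\alpha$ bounded in $W^{2,p}$ for all $p$, hence $\mathcal{L}_h W_\alpha$ and $a_\alpha$ bounded in $C^{0,\eta}$; then Schauder estimates on the scalar equation give $u_\alpha$ bounded in $C^{2,\eta}$, which feeds back to give $W_\alpha$ bounded in $C^{2,\eta}$ as well, in particular in $C^{1,\eta}$. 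Passing to a subsequence and using Arzel\`a--Ascoli, $(u_\alpha, W_\alpha)\to (u_0, W_0)$ in $C^{1,\eta}$ for every $\eta$, and the convergence hypotheses \eqref{eqconv} on the coefficients let us pass to the limit in \eqref{equationEalpha}, so $(u_0, W_0)$ solves the limit system; the limit $u_0$ is positive by the uniform lower bound, completing the proof.

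\medskip The hard part is unquestionably the blow-up exclusion, i.e.\ proving $L^\infty$-boundedness. The genuine difficulty specific to \eqref{equationEalpha}, as opposed to a classical scalar Yamabe-type problem, is the \emph{coupling}: the source term $u_\alpha^6 X_\alpha$ in the vector equation means a blow-up of $u_\alpha$ could in principle produce a blow-up of $\mathcal{L}_h W_\alpha$, which then enters $a_\alpha$ and hence the scalar equation through $a_\alpha/u_\alpha^7$; controlling this loop, and showing that $\mathcal{L}_h W_\alpha$ remains bounded (or at worst blows up strictly more slowly than $u_\alpha$ near a concentration point, so that $a_\alpha/u_\alpha^7 \to 0$ there) is exactly the ``simultaneous investigation of the defects of compactness in each of the two equations'' alluded to in the introduction, and is the crux of Section \ref{sectionlba}.
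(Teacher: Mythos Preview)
Your overall architecture --- uniform lower bound from $b_0\not\equiv 0$, blow-up analysis producing concentration points and bubble profiles, a contradiction ruling out blow-up, then elliptic bootstrap --- matches the paper's, and your identification of the coupling $u_\alpha^6 X_\alpha \to \mathcal{L}_h W_\alpha \to a_\alpha \to a_\alpha/u_\alpha^7$ as the essential difficulty is exactly right. The bootstrap paragraph and the lower-bound argument are also correct and match Claims~4.1--4.2.

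However, your account of \emph{how} the contradiction is obtained misplaces two ingredients, and as written would not close. First, on $\mathbb{S}^3$ the Poho\v{z}aev identity around a single bubble does \emph{not} by itself yield a sign obstruction: the pure critical equation $\Delta_h u + \tfrac34 u = u^5$ has a noncompact family of solutions (the conformal orbit of the round metric), so ``the leading boundary term carries a definite sign'' is false without further input. In the paper the Poho\v{z}aev identity is used not to produce the final contradiction but, inside Section~\ref{sectionlba} (Claim~\ref{claim-lba5}), to show that the local mass $\beta(0)$ vanishes, which forces the sharp bubble estimate $v_\alpha \approx B_\alpha$ to hold on the \emph{entire} available scale $\hat\rho_\alpha$ (this is Claim~\ref{claim5}). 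Second, the role of $b_0\not\equiv 0$ is not to ``make the Poho\v{z}aev balance impossible'': near the concentration point $u_\alpha$ is large, so $a_\alpha/u_\alpha^7$ is small and contributes only lower-order terms to the Poho\v{z}aev computation. Its role is to give the global lower bound $u_\alpha\ge\eps_0$ (Claim~\ref{claim1}); combined with the full-scale bubble estimate $u_\alpha\approx B_\alpha\sim\sqrt{\mu_\alpha}/\rho_\alpha$ at distance $\rho_\alpha$, this forces $\rho_\alpha\to 0$, i.e.\ any concentration point must have another one nearby.

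The actual contradiction is then a bubble-interaction argument (end of Section~\ref{sectionproof}): rescaling at the minimal mutual distance $d_\alpha$ between concentration points, one uses the Green representation and the sharp estimate of Claim~\ref{claim5} to show that two nearby bubbles would each force the other's height to vanish ($\mu_{2,\alpha}/\mu_{1,\alpha}\to 0$ and $\mu_{1,\alpha}/\mu_{2,\alpha}\to 0$ simultaneously), while if both are bounded the limit satisfies $\Delta_\xi\tilde u = f_0(x_1)\tilde u^5$ with two distinct critical points, contradicting Caffarelli--Gidas--Spruck. So the chain is: $b_0\not\equiv 0$ $\Rightarrow$ lower bound $\Rightarrow$ (with Poho\v{z}aev) bubble fills scale $\Rightarrow$ $\rho_\alpha\to 0$ $\Rightarrow$ $N_\alpha\ge 2$ $\Rightarrow$ interaction contradiction. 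Your sketch collapses the last three steps into a single Poho\v{z}aev obstruction that does not exist on the sphere.
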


Compactness and stability results have a long time history. A striking result was the recent complete proof of the compactness of the Yamabe equation by Khuri-Marques-Schoen \cite{KhuMaSc} together with its limitation by Brendle \cite{Bre} and Brendle-Marques \cite{BreMa}. As a remark an equation can be compact but unstable, namely sensitive to changes of the parameters in the equation, and this is the case for the 
Yamabe equation on the projective space. General references on the stability of elliptic PDEs are by Druet \cite{DruetENSAIOS} and Hebey \cite{HebeyZLAM}. The specific case of the Einstein-Lichnerowicz equation is addressed in Druet-Hebey \cite{DruHeb}, Hebey-Veronelli \cite{HeVer} and Premoselli \cite{Premoselli2}. Changing the viewpoint, passing from the elliptic world to the dynamical setting, we mention the striking groundbreaking global stability of the Minkowski space-time obtained by Christodoulou-Klainerman \cite{ChristodoulouKlainerman} or the amount of work concerning the stability of Kerr space-times (see Dafermos-Rodnianski \cite{DafermosRodnianski} and the references therein). 

\medskip Theorem \ref{Th1} is a direct consequence of Theorem \ref{thm2}. The rest of the paper is devoted to the proof of Theorem \ref{thm2}. By standard elliptic theory, if the convergences in (\ref{eqconv}) take place in more regular spaces, one would get more regularity in the convergence in the conclusion of Theorem \ref{thm2}.

\section{Notations}\label{sectionnotations}

Given $P\in {\mathbb S}^3$, we let $\pi_P$ be the stereographic projection of pole $-P$. Then we have that 
$$\left(\pi_P^{-1}\right)^\star h= U^4 \xi\hskip.1cm,$$
where $\xi$ is the Euclidean metric and 
\begin{equation}\label{eqdefconffactor}
U(x)=\sqrt{\frac{2}{1+\vert x\vert^2}}\hskip.1cm.
\end{equation}
Given $u\in C^\infty\left({\mathbb S}^3\right)$ and $W$ a smooth $1$-form on $S^3$, we shall denote by $u\left[P\right]$ and $W\left[P\right]$ the following function and $1$-form on ${\mathbb R}^3$~:
\begin{equation}\label{eqfctstereo}
u\left[P\right](x) = u\circ \pi_P^{-1}(x)
\end{equation}
and 
\begin{equation}\label{eqvectstereo}
\left(\pi_P\right)^\star W\left[P\right]=W\hskip.1cm.
\end{equation}
Then we have that 
\begin{equation}\label{eqlapfctstereo}
\Delta_\xi\left(u[P]U\right)= U^5\left(\Delta_h u +\frac{3}{4}u\right)\circ \pi_P^{-1}\hskip.1cm,
\end{equation}
that 
\begin{equation}\label{eqLgstereo}
\left(\pi_P\right)^\star \left(U^4{\mathcal L}_\xi \left(U^{-4}W[P]\right) \right)=  {\mathcal L}_h W
\end{equation}
and that 
\begin{equation}\label{eqlapvectstereo}
\left(\pi_P\right)^\star\left(\overrightarrow{\Delta_\xi} \left(U^{-4}W[P]\right)_i - 6U^{-1}{\mathcal L}_\xi \left(U^{-4}W[P]\right)_{ij}\partial^j U\right)= \left(\overrightarrow{\Delta_h} W\right)_i\hskip.1cm.
\end{equation}

\section{Proof of Theorem \ref{thm2}}\label{sectionproof}

We let $\left(u_\alpha,W_\alpha\right)$ be a sequence of solutions on $\left(S^3,h\right)$ of $\left(\ref{equationEalpha}\right)$. And we assume that (\ref{eqconv}) and (\ref{eqnonzero}) hold. We first claim that $u_\alpha$ stays uniformly positive~:

\begin{claim}\label{claim1}
There exists $\eps_0>0$ such that $u_\alpha\ge \eps_0$ on ${\mathbb S}^3$ for all $\alpha$.
\end{claim}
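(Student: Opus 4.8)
The plan is to argue by contradiction using the structure of the scalar equation in \eqref{equationEalpha}, and in particular the presence of the singular term $a_\alpha/u_\alpha^7$, which blows up as $u_\alpha \to 0$. Suppose that, up to a subsequence, $\min_{\mathbb{S}^3} u_\alpha \to 0$. Let $x_\alpha \in \mathbb{S}^3$ be a point where $u_\alpha$ attains its minimum. At $x_\alpha$ one has $\Delta_h u_\alpha(x_\alpha) \le 0$, so the scalar equation gives
\[
h_\alpha(x_\alpha) u_\alpha(x_\alpha) \ge f_\alpha(x_\alpha) u_\alpha(x_\alpha)^5 + \frac{a_\alpha(x_\alpha)}{u_\alpha(x_\alpha)^7} \ge \frac{b_\alpha(x_\alpha)}{u_\alpha(x_\alpha)^7},
\]
since $a_\alpha = |U_\alpha + \mathcal{L}_h W_\alpha|^2 + b_\alpha \ge b_\alpha$. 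Hence $u_\alpha(x_\alpha)^8 \ge b_\alpha(x_\alpha)/|h_\alpha(x_\alpha)|$ (wherever $h_\alpha(x_\alpha)\ne 0$), and if we knew that $b_\alpha$ stays uniformly bounded away from zero near $x_\alpha$, and $h_\alpha$ stays bounded, this would force $u_\alpha(x_\alpha)$ to stay bounded away from zero, a contradiction. The difficulty is that we only know $b_0 \not\equiv 0$ from \eqref{eqnonzero}, not that $b_0$ is everywhere positive, so the concentration point $x_\alpha$ could converge to a point where $b_0$ vanishes.

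To handle this, I would upgrade the pointwise minimum argument to an integral / comparison argument that propagates the positivity away from the (possibly small) region where $b_0$ is large. Concretely: since $b_0 \not\equiv 0$ and $b_0 \ge 0$, there is a point $x_\star$ and a ball $B$ around it on which $b_0 \ge c_0 > 0$; by the $C^0$ convergence in \eqref{eqconv}, $b_\alpha \ge c_0/2$ on $B$ for $\alpha$ large. The pointwise argument above, applied at the minimum of $u_\alpha$ over $\overline{B}$, shows that either the minimum over $\overline B$ is attained on $\partial B$, or $u_\alpha \ge (c_0/(2\|h_\alpha\|_\infty))^{1/8}$ somewhere — in fact the argument localizes: at any interior minimum point of $u_\alpha$ in $B$ one gets a lower bound. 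A cleaner route is to test the scalar equation against a suitable function, or to use the coercivity of $\Delta_h + h_0$ together with an $L^2$ (or $H^1$) bound: one shows $\int_{\mathbb{S}^3} u_\alpha^{-7}$ is controlled, hence $\int_B u_\alpha^{-7} b_\alpha$ is controlled, which combined with a Harnack-type or moving-plane-free comparison on the sphere forces $u_\alpha$ to stay uniformly positive on all of $\mathbb{S}^3$. The mechanism is always the same: $u_\alpha$ cannot be too small where $b_\alpha$ is positive, and ellipticity spreads this positivity.

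The main obstacle I anticipate is precisely the interplay between the singular nonlinearity and the fact that $b_0$ may vanish on part of $\mathbb{S}^3$: one cannot simply read off a lower bound from the equation at a global minimum, because the minimum could sit where $b_\alpha$ is arbitrarily small. The key step is therefore to combine (i) the a priori bound on $u_\alpha$ in $H^1$ or $L^\infty$ coming from coercivity of $\Delta_h + h_0$ and the sign of $f_\alpha$ (so that the $f_\alpha u_\alpha^5$ term does not help $u_\alpha$ collapse either), with (ii) a quantitative statement that $\int_{\mathbb{S}^3} a_\alpha u_\alpha^{-7}$, and hence $\int_{\mathbb{S}^3} b_\alpha u_\alpha^{-7}$, remains bounded — integrate the scalar equation against $1$ and use $\int_{\mathbb{S}^3}\Delta_h u_\alpha = 0$:
\[
\int_{\mathbb{S}^3} \frac{b_\alpha}{u_\alpha^7} \le \int_{\mathbb{S}^3} \frac{a_\alpha}{u_\alpha^7} = \int_{\mathbb{S}^3}\big( -h_\alpha u_\alpha + f_\alpha u_\alpha^5\big) \le C,
\]
provided $u_\alpha$ is bounded in $L^\infty$ (or at least in $L^5$). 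Then Fatou's lemma applied to $b_\alpha u_\alpha^{-7}$ forces $u_\alpha$ to be bounded below a.e. on $\{b_0 > 0\}$, and a final elliptic regularity / Harnack argument on $(\mathbb{S}^3, h)$ — using that $u_\alpha$ is a positive supersolution of a linear equation with bounded coefficients — promotes this to the uniform lower bound $u_\alpha \ge \varepsilon_0$ everywhere. I expect the $L^\infty$ (or $L^5$) bound on $u_\alpha$ needed to close step (ii) to be the delicate point at this stage of the paper, and it is likely established separately or bootstrapped together with Claim \ref{claim1}.
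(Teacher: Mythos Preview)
Your proposal has a genuine gap, and you identify it yourself: the route via integrating the equation against $1$ requires an a priori bound on $\int f_\alpha u_\alpha^5$, i.e.\ essentially an $L^5$ or $L^\infty$ bound on $u_\alpha$. But no such bound is available at this stage of the argument; indeed, the entire paper is devoted to proving that $u_\alpha$ stays bounded, and this is precisely what is assumed to fail in \eqref{hypblowup}. So your step (ii) cannot be closed, and the ``bootstrapped together with Claim \ref{claim1}'' hope does not materialize.

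The key idea you are missing is that the two nonlinear terms \emph{compete}: for fixed $f>0$ and $b>0$, the function $t\mapsto f t^5 + b t^{-7}$ has a strictly positive minimum over $t>0$ (attained at $t^{12}=7b/(5f)$), namely
\[
f t^5 + b t^{-7}\;\ge\; \frac{12b}{5}\left(\frac{7b}{5f}\right)^{-7/12}.
\]
So $f_\alpha u_\alpha^5 + a_\alpha u_\alpha^{-7} \ge f_\alpha u_\alpha^5 + b_\alpha u_\alpha^{-7}$ is bounded below by a nonnegative function that is \emph{independent of $u_\alpha$} and not identically zero (since $b_0\not\equiv 0$ and $f_0>0$). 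This is what removes any need for an a priori bound on $u_\alpha$.

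To convert this into a uniform lower bound at every point, the paper uses Green's representation for the coercive operator $\Delta_h + h_\alpha$: the Green function $G_\alpha$ is uniformly positive on $\mathbb{S}^3\times\mathbb{S}^3$, so
\[
u_\alpha(x)=\int_{\mathbb{S}^3} G_\alpha(x,y)\bigl(f_\alpha u_\alpha^5 + a_\alpha u_\alpha^{-7}\bigr)(y)\,dv_h(y)\;\ge\; C\int_{\mathbb{S}^3}\frac{12b_\alpha}{5}\left(\frac{7b_\alpha}{5f_\alpha}\right)^{-7/12}dv_h\;\ge\;\varepsilon_0>0.
\]
This is a one-line argument once you spot the algebraic inequality; your minimum-principle and Harnack machinery are not needed.
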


\medskip {\bf Proof.} We let $G_\alpha$ be the Green function of $\Delta_g + h_\alpha$ on ${\mathbb S}^3$ which is uniformly positive thanks to (\ref{eqconv}) and the fact that $\Delta_h + h_0$ is coercive (see \cite{RobWeb}). Then we can use Green's representation formula to write that 
$$u_\alpha(x) \ge C \int_{{\mathbb S}^3} \left(f_\alpha u_\alpha^5+a_\alpha u_\alpha^{-7}\right)\, dv_h \hskip.1cm.$$
Since 
$$f_\alpha u_\alpha^5+a_\alpha u_\alpha^{-7}\ge f_\alpha u_\alpha^5+b_\alpha u_\alpha^{-7}\ge \frac{12b_\alpha}{5}\left(\frac{7b_\alpha}{5f_\alpha}\right)^{-\frac{7}{12}}\hskip.1cm,$$
the claim follows from the assumptions $f_0>0$ and $b_0\not\equiv 0$. \hfill $\diamondsuit$

\begin{claim}\label{claim2}
If there exists $C>0$ such that $u_\alpha\le C$ on ${\mathbb S}^3$ for all $\alpha$, then, for all $0<\eta<1$, there exists $D>0$ such that 
$$\left\Vert W_\alpha\right\Vert_{C^{1,\eta}} + \left\Vert u_\alpha\right\Vert_{C^{1,\eta}}\le D\hskip.1cm.$$
In particular, the conclusion of Theorem \ref{thm2} holds in this case.
\end{claim}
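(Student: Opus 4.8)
The plan is a two‑step elliptic bootstrap followed by an Arzel\`a--Ascoli extraction; this is the case where the $u_\alpha$ do not blow up, so no genuinely hard point arises. Note first that Claim \ref{claim1} already provides the uniform lower bound $u_\alpha\ge\eps_0$, so that under the hypothesis $u_\alpha\le C$ of the present claim one has $\eps_0\le u_\alpha\le C$ on $\mathbb{S}^3$, uniformly in $\alpha$.

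First I would estimate $W_\alpha$ from the second equation of \eqref{equationEalpha}. Since $u_\alpha^6$ is bounded in $C^0$ and $X_\alpha\to X_0$, $Y_\alpha\to Y_0$ in $C^0$ by \eqref{eqconv}, the right-hand side $u_\alpha^6 X_\alpha+Y_\alpha$ is bounded, uniformly in $\alpha$, in $L^p(\mathbb{S}^3)$ for every $p<\infty$. The operator $\overrightarrow{\Delta_h}$ is second-order elliptic on $1$-forms with finite-dimensional kernel, so combining the $L^p$ elliptic estimate for $\overrightarrow{\Delta_h}$ with the normalization on $W_\alpha$ — valid since the system only involves $\mathcal{L}_h W_\alpha$ and is thus unchanged by adding to $W_\alpha$ any $1$-form $Z$ with $\mathcal{L}_h Z=0$ — and a routine contradiction argument ruling out unbounded contributions from that kernel, one obtains that $W_\alpha$ is bounded in $W^{2,p}(\mathbb{S}^3)$ for every $p$, hence in $C^{1,\eta}(\mathbb{S}^3)$ for every $0<\eta<1$ by Sobolev embedding. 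In particular $\mathcal{L}_h W_\alpha$ is bounded in $C^0$, so $a_\alpha=\vert U_\alpha+\mathcal{L}_h W_\alpha\vert^2+b_\alpha$ is bounded in $C^0$.

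Next I would estimate $u_\alpha$ from the first equation. With $\eps_0\le u_\alpha\le C$, with $f_\alpha$ and $h_\alpha$ bounded in $C^0$, and with $a_\alpha$ bounded in $C^0$ by the previous step, the right-hand side $f_\alpha u_\alpha^5+a_\alpha u_\alpha^{-7}$ is bounded in $C^0$. Since $h_\alpha\to h_0$ in $C^0$ and $\Delta_h+h_0$ is coercive, $\Delta_h+h_\alpha$ is uniformly coercive, hence uniformly invertible, for $\alpha$ large; the $L^p$ elliptic estimate then gives $u_\alpha$ bounded in $W^{2,p}(\mathbb{S}^3)$ for every $p$, thus in $C^{1,\eta}(\mathbb{S}^3)$ for every $0<\eta<1$. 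This establishes the bound $\Vert W_\alpha\Vert_{C^{1,\eta}}+\Vert u_\alpha\Vert_{C^{1,\eta}}\le D$.

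Finally, bounding the pair in $C^{1,\eta}$ for a slightly larger $\eta$ and applying the Arzel\`a--Ascoli theorem, I would extract a subsequence with $(u_\alpha,W_\alpha)\to(u_0,W_0)$ in $C^{1,\eta}(\mathbb{S}^3)$ and $u_0\ge\eps_0>0$. Using \eqref{eqconv} and the $C^1$ convergence of $W_\alpha$, so that $\mathcal{L}_h W_\alpha\to\mathcal{L}_h W_0$ in $C^0$ and hence $a_\alpha\to a_0=\vert U_0+\mathcal{L}_h W_0\vert^2+b_0$, one passes to the limit in both equations and finds that $(u_0,W_0)$ solves the limiting system weakly, and then classically by elliptic regularity. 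This gives the conclusion of Theorem \ref{thm2} in this case. The only point requiring the slightest care is the control of $W_\alpha$ in the first step, because $\overrightarrow{\Delta_h}$ has a nontrivial kernel; everything else is textbook elliptic regularity.
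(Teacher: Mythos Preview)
Your proposal is correct and is precisely the standard elliptic bootstrap that the paper invokes in its one-line proof (``standard elliptic theory'' together with Proposition~\ref{stdellipticclosed}); you have simply unpacked the details. Note that the orthogonality of $W_\alpha$ to the conformal Killing forms is already assumed in the setup, so Proposition~\ref{stdellipticclosed} applies directly with $C_2=0$ and no separate contradiction argument is needed.
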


\medskip {\bf Proof.} This follows from standard elliptic theory as developed in section \ref{stddelltheory}, see proposition \ref{stdellipticclosed}. \hfill $\diamondsuit$ 

\medskip From now on, we assume that 
\begin{equation}\label{hypblowup}
\sup_{{\mathbb S}^3} u_\alpha\to +\infty \hbox{ as }\alpha\to +\infty\hskip.1cm.
\end{equation}

\begin{claim}\label{claim3}
There exist $N_\alpha\in {\mathbb N}^\star$ and $\left(x_{1,\alpha}, x_{2,\alpha},\dots, x_{N_\alpha,\alpha}\right)$ points in ${\mathbb S}^3$ such that~:

\smallskip i) $\nabla u_\alpha\left(x_{i,\alpha}\right)=0$ for $i=1,\dots,N_\alpha$.

\smallskip ii) $d_h\left(\xia,\xja\right)^{\frac{1}{2}}u_\alpha\left(\xia\right)\ge 1$ for all $i,j\in\left\{1,\dots,N_\alpha\right\}$, $i\neq j$.

\smallskip iii) There exists $C>0$ such that 
$$\left(\min_{i=1,\dots,N_\alpha}d_h\left(\xia,x\right)\right)^{3}\left(u_\alpha(x)^6+\left\vert {\mathcal L}_h W_\alpha\right\vert_h(x)\right)\le C$$
for all $\alpha$ and all $x\in {\mathbb S}^3$.
\end{claim}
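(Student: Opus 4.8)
\textbf{Proof of Claim \ref{claim3} (approach).} The plan is to build the points $x_{i,\alpha}$ by an iterative selection among critical points of $u_\alpha$, and then to obtain the weak pointwise bound iii) by contradiction from a maximality property, feeding back the second equation of \eqref{equationEalpha} to control the vector unknown. \emph{Step 1 (selection).} Fix $\alpha$. I would observe that a family $\{x_1,\dots,x_k\}$ of critical points of $u_\alpha$ satisfying ii) is automatically finite with cardinality controlled by $\sup_{\mathbb{S}^3}u_\alpha$: requiring ii) for both orderings is equivalent to $d_h(x_i,x_j)\ge\max(u_\alpha(x_i)^{-2},u_\alpha(x_j)^{-2})\ge(\sup u_\alpha)^{-2}$, so the points are uniformly separated on the compact manifold. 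Starting from a point where $u_\alpha$ attains its maximum (a critical point), I would enlarge it to a family $(x_{1,\alpha},\dots,x_{N_\alpha,\alpha})$ maximal with respect to ii). This gives i), ii) and $N_\alpha\in\mathbb{N}^\star$ by construction.

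\emph{Step 2 (the estimate iii)).} Set $d_\alpha(x)=\min_{1\le i\le N_\alpha}d_h(x_{i,\alpha},x)$ and suppose, arguing by contradiction and after passing to a subsequence, that there are $y_\alpha$ with $d_\alpha(y_\alpha)^3(u_\alpha(y_\alpha)^6+|\mathcal{L}_hW_\alpha|_h(y_\alpha))\to+\infty$. Since $d_h$ is bounded on $\mathbb{S}^3$, along the subsequence either $u_\alpha(y_\alpha)\to+\infty$, or $|\mathcal{L}_hW_\alpha|_h(y_\alpha)\to+\infty$; in the latter case the Green representation for $\overrightarrow{\Delta_h}$ applied to the second equation, together with the kernel bound $\sim d_h(x,z)^{-2}$ and the boundedness of $X_\alpha,Y_\alpha$, yields $|\mathcal{L}_hW_\alpha|_h(y_\alpha)\le C\int_{\mathbb{S}^3}(u_\alpha(z)^6+1)\,d_h(y_\alpha,z)^{-2}\,dv_h+C$, so that a large value of $|\mathcal{L}_hW_\alpha|_h(y_\alpha)$ forces $\int u_\alpha^6$ to be large on a ball of radius comparable to $d_\alpha(y_\alpha)$ around $y_\alpha$, hence, by a mean value argument, $u_\alpha^6$ to be large at some interior point of that ball lying at distance $\gtrsim d_\alpha(y_\alpha)$ from every $x_{i,\alpha}$. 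In both cases we produce $z_\alpha$ with $d_h(z_\alpha,x_{i,\alpha})\ge c\,d_\alpha(y_\alpha)$ for all $i$ and $d_\alpha(y_\alpha)^{1/2}u_\alpha(z_\alpha)\to+\infty$. \emph{Step 3 (moving to the top of a bubble).} On the ball $B(z_\alpha,\tfrac12 d_\alpha(y_\alpha))$, which contains none of the $x_{i,\alpha}$, I would maximize a scale-invariant weight such as $z\mapsto(\tfrac12 d_\alpha(y_\alpha)-d_h(z_\alpha,z))u_\alpha(z)^2$ to obtain $\tilde z_\alpha$ with $\mu_\alpha:=u_\alpha(\tilde z_\alpha)^{-2}\to0$ and $d_h(\tilde z_\alpha,\partial B)\gg\mu_\alpha$. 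Rescaling the scalar equation of \eqref{equationEalpha} at $\tilde z_\alpha$ at scale $\mu_\alpha$, discarding the $u_\alpha^{-7}$ term via Claim \ref{claim1} and using $f_0>0$, the rescaled solutions converge in $C^1_{loc}(\mathbb{R}^3)$ to the standard bubble, whose unique critical point is its nondegenerate maximum; hence $u_\alpha$ has a critical point $x'_\alpha$ with $d_h(x'_\alpha,\tilde z_\alpha)=o(\mu_\alpha)$, so $u_\alpha(x'_\alpha)\to+\infty$ and $d_h(x'_\alpha,x_{i,\alpha})\ge\tfrac14 c\,d_\alpha(y_\alpha)$ for all $i$. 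A short bookkeeping (using that $(x_{i,\alpha})$ already satisfies ii) and that $\mu_\alpha\ll d_\alpha(y_\alpha)$) then shows $\{x_{1,\alpha},\dots,x_{N_\alpha,\alpha},x'_\alpha\}$ still satisfies ii), contradicting the maximality of Step 1 and proving iii).

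\emph{Main obstacle.} The delicate point is Step 2, the simultaneous treatment of the two equations. One cannot simply run the selection for $u_\alpha$ and add $|\mathcal{L}_hW_\alpha|$ afterwards, because the weak bound $u_\alpha^6\le C d_\alpha^{-3}$ is only borderline non-integrable against the kernel $d_h^{-2}$ of $\overrightarrow{\Delta_h}$; the argument genuinely needs to convert a concentration of $W_\alpha$ into a concentration of $u_\alpha$ through the vector equation, and then to make the reductions quantitative with the correct powers of $d_\alpha(y_\alpha)$. Keeping track of this coupling — here and in the subsequent pointwise analysis — is where the bulk of the technical work lies.
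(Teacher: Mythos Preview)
Your selection step is fine, and the overall architecture (select, assume iii) fails, rescale, find a new critical point contradicting the selection) is the right one. The gap is in how you handle the coupling, and it occurs precisely at the two places you flag.

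\medskip
\textbf{Step 3 does not go through.} You write that after rescaling at $\tilde z_\alpha$ one can ``discard the $u_\alpha^{-7}$ term via Claim~\ref{claim1}''. Claim~\ref{claim1} bounds $u_\alpha^{-7}$, but the numerator is $a_\alpha=|U_\alpha+\mathcal L_hW_\alpha|^2+b_\alpha$, and you have no control whatsoever on $|\mathcal L_hW_\alpha|$ near $\tilde z_\alpha$ at scale $\mu_\alpha$; your weighted maximum in Step~3 only involves $u_\alpha$. After rescaling, the offending term is $\mu_\alpha^6 a_\alpha(\cdot)\,\tilde u_\alpha^{-7}$, and nothing prevents $\mu_\alpha^6|\mathcal L_hW_\alpha|^2$ from blowing up, so you cannot apply Harnack nor pass to the limit and get a bubble.

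\medskip
\textbf{Step 2 does not localize.} From the vector Green representation you correctly get $|\mathcal L_hW_\alpha|_h(y_\alpha)\lesssim\int_{\mathbb S^3}u_\alpha^6\,d_h(y_\alpha,\cdot)^{-2}+C$, but the conclusion ``hence $\int u_\alpha^6$ is large on a ball of radius $\sim d_\alpha(y_\alpha)$ around $y_\alpha$'' does not follow: the mass of $u_\alpha^6$ could sit near the $x_{i,\alpha}$'s (or anywhere else) and still make the convolution large at $y_\alpha$. Without an a priori bound on $\int u_\alpha^6$ or on $u_\alpha$ away from the selected points, which is exactly what iii) would give, this splitting is circular.

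\medskip
\textbf{What the paper does instead.} The paper first invokes a known selection lemma that already gives i), ii) together with the stronger property iv): \emph{every} critical point $x$ of $u_\alpha$ satisfies $d_\alpha(x)^{1/2}u_\alpha(x)\le 1$. To upgrade iv) to iii), one takes $x_\alpha$ to be a point where the \emph{combined} quantity
\[
\Phi_\alpha(x)=d_\alpha(x)^3\bigl(u_\alpha(x)^6+|\mathcal L_hW_\alpha|_h(x)\bigr)
\]
is maximal and sets $\mu_\alpha^{-3}=u_\alpha(x_\alpha)^6+|\mathcal L_hW_\alpha|_h(x_\alpha)$. The point of rescaling at the maximum of $\Phi_\alpha$ is that it gives, for free, the simultaneous bound $\tilde u_\alpha^6+|\mathcal L_\xi\tilde W_\alpha|\le 1+o(1)$ on every fixed ball; this is exactly the missing ingredient in your Step~3. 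To then show $\mathcal L_\xi\tilde W_\alpha\to 0$ (not merely bounded), one does \emph{not} use the vector equation directly: one uses the scalar equation. Since $\Delta_\xi\tilde u_\alpha\ge \tilde a_\alpha/\tilde u_\alpha^7 - O(\mu_\alpha^2)\tilde u_\alpha$ and $\tilde a_\alpha\gtrsim|\mathcal L_\xi\tilde W_\alpha|^2$, the Green representation for $\tilde u_\alpha$ on $B_y(R)$ yields $\int_{B_y(R)}|x-y|^{-1}|\mathcal L_\xi\tilde W_\alpha|^2\le C$ with $C$ independent of $R$; picking a good radius $r_\alpha\in[R/2,R]$ gives a boundary with $\int_{\partial B_y(r_\alpha)}|\mathcal L_\xi\tilde W_\alpha|^2\le C$, and then the vector Green representation on $B_y(r_\alpha)$ shows $|\mathcal L_\xi\tilde W_\alpha|(y)\le C/R+o(1)$. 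Letting $R\to\infty$ gives $\mathcal L_\xi\tilde W_\alpha\to 0$, after which the rescaled scalar equation converges to $\Delta\tilde u=f_0(x_0)\tilde u^5$, the limit is a standard bubble, and the induced critical point contradicts iv). The key idea you are missing is thus twofold: rescale at the maximum of the combined quantity, and feed $|\mathcal L W|^2$ back through the \emph{scalar} equation (via the nonnegative term $a_\alpha u_\alpha^{-7}$) rather than through the vector one.
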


\medskip {\bf Proof.} We start with lemma 1.1 of \cite{DruHeb} which gives us $N_\alpha\in {\mathbb N}^\star$ and points $\left(x_{1,\alpha}, x_{2,\alpha},\dots, x_{N_\alpha,\alpha}\right)$  in ${\mathbb S}^3$ such that~:

\smallskip i) $\nabla u_\alpha\left(x_{i,\alpha}\right)=0$ for $i=1,\dots,N_\alpha$.

\smallskip ii) $d_h\left(\xia,\xja\right)^{\frac{1}{2}}u_\alpha\left(\xia\right)\ge 1$ for all $i,j\in\left\{1,\dots,N_\alpha\right\}$, $i\neq j$.

\smallskip iv) For any $x\in {\mathbb S}^3$ with $\nabla u_\alpha(x)=0$,  
$$ \left(\min_{i=1,\dots,N_\alpha}d_h\left(\xia,x\right)\right)^{\frac{1}{2}}u_\alpha(x)\le 1\hskip.1cm.$$

\medskip\noindent The aim is to replace iv) by iii). Assume by contradiction that iii) is false and let $x_\alpha\in {\mathbb S}^3$ be such that 
\begin{equation}\label{eq-claim3-1}
\Phi_\alpha\left(x_\alpha\right)= \sup_{{\mathbb S}^3} \Phi_\alpha(x)\to +\infty \hbox{ as }\alpha\to +\infty\hskip.1cm,
\end{equation}
where
$$\Phi_\alpha(x)=\left(\min_{i=1,\dots,N_\alpha}d_h\left(\xia,x\right)\right)^{3}\left(u_\alpha(x)^6+\left\vert {\mathcal L}_h W_\alpha\right\vert_h(x)\right)\hskip.1cm.$$
We let $\mu_\alpha>0$ be such that  
\begin{equation}\label{eq-claim3-2}
u_\alpha\left(\xa\right)^6+\left\vert {\mathcal L}_h W_\alpha\right\vert_h\left(x_\alpha\right)=\mu_\alpha^{-3}\hskip.1cm.
\end{equation}
Since ${\mathbb S}^3$ is compact, it is then clear that 
\beq\label{eq-claim3-3}
\ma\to 0\hbox{ as }\alpha\to +\infty\hskip.1cm.
\eeq
We also have thanks to (\ref{eq-claim3-1}) that
\beq\label{eq-claim3-4}
\frac{d_h\left(x_\alpha,{\mathcal S}_\alpha\right)}{\ma}\to +\infty\hbox{ as }\alpha\to +\infty\hskip.1cm,
\eeq
where 
$${\mathcal S}_\alpha = \left\{x_{1,\alpha},\dots,x_{N_\alpha,\alpha}\right\}\hskip.1cm.$$
We set 
\begin{equation}\label{eq-claim3-5}
\tilde{u}_\alpha = \ma^{\frac{1}{2}}u_\alpha\left[x_\alpha\right]\left(\mu_\alpha x\right) U\left(\mu_\alpha x\right)
\end{equation}
and
\begin{equation}\label{eq-claim3-6}
\tilde{W}_\alpha = \mu_\alpha^2 U\left(\mu_\alpha x\right)^{-4} W_\alpha\left[x_\alpha\right]\left(\mu_\alpha x\right)\hskip.1cm.
\end{equation}
Then, equation (\ref{equationEalpha}) leads with (\ref{eqlapfctstereo}), (\ref{eqLgstereo}) and (\ref{eqlapvectstereo}) to 
\begin{equation}\label{eq-claim3-7}
\left\{\begin{array}{l}
{\ds \Delta_\xi \tilde{u}_\alpha + \ma^2 \tilde{h}_\alpha \tilde{u}_\alpha = \tilde{f}_\alpha \tilde{u}_\alpha^5 + \frac{\tilde{a}_\alpha}{\tilde{u}_\alpha^7}}\\
\,\\
{\ds \left(\overrightarrow{\Delta_\xi} \tilde{W}_\alpha\right)_i = -6\mu_\alpha^2 \frac{x^j}{1+\mu_\alpha^2 \vert x\vert^2} \left({\mathcal L}_\xi \tilde{W}_\alpha\right)_{ij} + \mu_\alpha \tilde{u}_\alpha^{6}\left(\tilde{X}_\alpha\right)_i +\mu_\alpha^4\left(\tilde{Y}_\alpha\right)_i}\\
\end{array}\right. 
\end{equation}
where 
\begin{equation}\label{eq-claim3-8}
\begin{array}{l}
{\ds \tilde{h}_\alpha (x)= \left(h_\alpha\left[x_\alpha\right]\left(\mu_\alpha x\right)-\frac{3}{4}\right)U\left(\mu_\alpha x\right)^4\hskip.1cm,}\\
\,\\{\ds \tilde{f}_\alpha (x)= f_\alpha\left[x_\alpha\right]\left(\mu_\alpha x\right)\hskip.1cm,}\\
\,\\{\ds \tilde{a}_\alpha(x)= \left\vert \mu_\alpha^3 \tilde{U}_\alpha+U\left(\mu_\alpha x\right)^6 {\mathcal L}_\xi \tilde{W}_\alpha\right\vert_\xi^2+\mu_\alpha^6 U\left(\mu_\alpha x\right)^{12} b_\alpha\left[x_\alpha\right]\left(\mu_\alpha x\right)\hskip.1cm,}\\
\,\\{\ds \tilde{U}_\alpha(x)=U\left(\mu_\alpha x\right)^2 \left(\pi_{x_\alpha}\right)_\star U_\alpha\left(\mu_\alpha x\right)\hskip.1cm,}\\
\,\\{\ds \tilde{X}_\alpha (x) = U\left(\mu_\alpha x\right)^{-6}\left(\pi_{x_\alpha}\right)_\star X_\alpha \left(\mu_\alpha x\right)\hskip.1cm,}\\
\,\\{\ds \tilde{Y}_\alpha(x)= \left(\pi_{x_\alpha}\right)_\star Y_\alpha \left(\mu_\alpha x\right)\hskip.1cm.}\\
\end{array}
\end{equation}
We know thanks to (\ref{eq-claim3-1}), (\ref{eq-claim3-2}), (\ref{eq-claim3-4}), (\ref{eq-claim3-5}) and (\ref{eq-claim3-6})  that 
\begin{equation}\label{eq-claim3-9}
\sup_{B_0\left(R\right)}\left(\frac{1}{8}\tilde{u}_\alpha^6+\left\vert {\mathcal L}_\xi \tilde{W}_\alpha\right\vert_\xi\right) \le 1+o(1)
\end{equation}
for all $R>0$ and that 
\begin{equation}\label{eq-claim3-10}
\frac{1}{8}\tilde{u}_\alpha(0)^6+\left\vert {\mathcal L}_\xi \tilde{W}_\alpha\right\vert_\xi (0)=1\hskip.1cm.
\end{equation}
Given $y\in {\mathbb R}^3$ and $R>0$, let us use the Green representation formula to write that 
$$\tilde{u}_\alpha(y)\ge \frac{1}{4\pi}\int_{B_y(2R)} \left(\frac{1}{\vert x- y\vert}-\frac{1}{2R}\right) \Delta_\xi \tilde{u}_\alpha\, dx$$
since $\tilde{u}_\alpha \ge 0$. Using equation (\ref{eq-claim3-7}) and the fact that $\tilde{f}_\alpha > 0$, we get that 
\begincal
\tilde{u}_\alpha(y)&\ge& \frac{1}{4\pi} \int_{B_y(2R)} \left(\frac{1}{\vert x- y\vert}-\frac{1}{2R}\right) \frac{\tilde{a}_\alpha(x)}{\tilde{u}_\alpha(x)^7}\, dx \\
&&-\frac{\mu_\alpha^2}{4\pi} \int_{B_y(R)} \left(\frac{1}{\vert x- y\vert}-\frac{1}{2R}\right) \tilde{h}_\alpha(x)\tilde{u}_\alpha(x)\, dx\hskip.1cm.
\fincal
We deduce that 
$$\int_{B_y(R)} \left\vert x-y\right\vert^{-1} \tilde{a}_\alpha(x)\le 8\pi \left(\sup_{B_y\left(2R\right)}\tilde{u}_\alpha\right)^8\left(1 +2 R^2\mu_\alpha^2 \sup_{B_y\left(2R\right)}\left\vert \tilde{h}_\alpha\right\vert\right)\hskip.1cm.$$
Using (\ref{eq-claim3-9}), we get that 
$$\limsup_{\alpha\to +\infty}\int_{B_y(R)} \left\vert x-y\right\vert^{-1} \tilde{a}_\alpha(x)\, dx \le C_1$$
for any $R>0$ and any $y\in {\mathbb R}^3$ where $C_1>0$ is some constant independent of $R$ and $y$. Thanks to (\ref{eqconv}) and (\ref{eq-claim3-8}), this leads to the existence of some $C_2>0$ independent of $R>0$ and $y\in {\mathbb R}^3$ such that 
$$\limsup_{\alpha\to +\infty}\int_{B_y(R)\setminus B_y\left(\frac{R}{2}\right)} \left\vert x-y\right\vert^{-1}\left\vert {\mathcal L}_\xi \tilde{W}_\alpha\right\vert_\xi^2(x)\, dx \le C_2$$
for all $R>0$ and all $y\in {\mathbb R}^3$. We deduce easily that~: for any $y\in {\mathbb R}^3$ and for any $R>0$, there exists $\frac{R}{2}\le r_\alpha\le R$ such that 
\begin{equation}\label{eq-claim3-11}
\int_{\partial B_y\left(r_\alpha\right)} \left\vert {\mathcal L}_\xi \tilde{W}_\alpha\right\vert_\xi^2(x)\, dx \le \frac{2C_2 r_\alpha}{R}\hskip.1cm.
\end{equation}
We use now the Green representation formula, see Proposition \ref{vecteurGreen}, to write that 
\begincal
\left\vert {\mathcal L}_\xi \tilde{W}_\alpha\right\vert (y)&\le & C_3 \int_{B_y\left(r_\alpha\right)} \left\vert x-y\right\vert^{-2} \left\vert \overrightarrow{\Delta_\xi} \tilde{W}_\alpha(x)\right\vert\, dx + C_3 r_\alpha^{-2}\int_{\partial B_y\left(r_\alpha\right)}\left\vert {\mathcal L}_\xi \tilde{W}_\alpha\right\vert \, d\sigma \\
&\le &C_3 \int_{B_y\left(r_\alpha\right)} \left\vert x-y\right\vert^{-2} \left\vert \overrightarrow{\Delta_\xi} \tilde{W}_\alpha(x)\right\vert\, dx + \frac{4C_3\sqrt{\pi C_2}}{R}\\
\fincal
thanks to (\ref{eq-claim3-11}) and to the fact that $2r_\alpha\ge R$. Using (\ref{eqconv}), (\ref{eq-claim3-7}), (\ref{eq-claim3-8}) and (\ref{eq-claim3-9}), we get that 
$$ \int_{B_y\left(r_\alpha\right)} \left\vert x-y\right\vert^{-2} \left\vert \overrightarrow{\Delta_\xi} \tilde{W}_\alpha(x)\right\vert\, dx \to 0$$
as $\alpha\to +\infty$. Thus we obtain that 
$$\limsup_{\alpha\to +\infty} \left\vert {\mathcal L}_\xi \tilde{W}_\alpha\right\vert (y)\le \frac{4C_3\sqrt{\pi C_2}}{R}\hskip.1cm.$$
Since this holds for all $R>0$, we have proved that 
\begin{equation}\label{eq-claim3-12}
 {\mathcal L}_\xi \tilde{W}_\alpha\to 0\hbox{ in }L^\infty_{loc}\left({\mathbb R}^3\right)\hbox{ as }\alpha\to +\infty\hskip.1cm.
 \end{equation}
Then we have thanks to (\ref{eqconv}) and (\ref{eq-claim3-8}) that 
$$\mu_\alpha^2 \tilde{h}_\alpha\to 0,\, \tilde{f}_\alpha\to f_0\left(x_0\right)\hbox{ and }\tilde{a}_\alpha\to 0\hbox{ in }L^\infty_{loc}\left({\mathbb R}^3\right)\hbox{ as }\alpha\to +\infty$$
where $x_\alpha\to x_0$ as $\alpha\to +\infty$ (up to a subsequence). By (\ref{eq-claim3-10}) and (\ref{eq-claim3-12}), there holds ${\ds \tilde{u}_\alpha(0)=\sqrt{2}+o(1)}$. Thanks to (\ref{eq-claim3-9}) and (\ref{eq-claim3-12}), the Harnack inequality of Proposition \ref{prop-HI} shows that $\left(\tilde{u}_\alpha\right)$ is uniformly positive in any compact set of ${\mathbb R}^3$ so that we can pass to the limit in equation (\ref{eq-claim3-7}) to get that, after passing to a subsequence, 
\begin{equation}\label{eq-claim3-13}
\tilde{u}_\alpha\to \tilde{u}\hbox{ in } C^{1,\eta}_{loc}\left({\mathbb R}^3\right)\hbox{ as }\alpha\to +\infty\hskip.1cm,
\end{equation}
where 
$$\Delta_\xi \tilde{u} = f_0\left(x_0\right) \tilde{u}^5\hbox{ in }{\mathbb R}^3$$
and 
$$0< \tilde{u}\le \tilde{u}(0)=\sqrt{2}$$
thanks to (\ref{eq-claim3-9}), (\ref{eq-claim3-10}) and (\ref{eq-claim3-12}). Then, by the classification result of Caffarelli-Gidas-Spruck \cite{CaGiSp}, we have that 
$$\tilde{u}(x) =  U\left(\sqrt{\frac{4f_0\left(x_0\right)}{3}}x\right)$$
so that it possesses a strict maximum in $0$. Thus $\tilde{u}_\alpha$ necessarily possesses a critical point in the neighbourhoud of $0$ for $\alpha$ large, which means that $u_\alpha$ possesses a critical point $y_\alpha\in {\mathbb S}^3$ with $d_h\left(y_\alpha,x_\alpha\right)=o\left(\mu_\alpha\right)$. This critical point $y_\alpha$ clearly violates point (iv). This proves that (iii) must hold and this end the proof of the claim. \hfill $\diamondsuit$

\medskip For the following claims (\ref{claim4} and \ref{claim5}), we consider $x_\alpha\in {\mathbb S}^3$ such that $\nabla u_\alpha\left(x_\alpha\right)=0$ and $\rho_\alpha>0$ such that there exists $C>0$ such that 
\beq\label{eq-ba-1}
d_h\left(x_\alpha,x\right)^{3} \left(u_\alpha(x)^6+\left\vert {\mathcal L}_h W_\alpha\right\vert_h (x)\right)\le C\hbox{ for all }x\in B_{\xa}\left(8\rho_\alpha\right)
\eeq
and such that 
\beq\label{eq-ba-2}
\rho_\alpha^{\frac{1}{2}} u_\alpha\left(x_\alpha\right)\ge \frac{1}{C}\hskip.1cm.
\eeq
This will be the case for any sequence $\left(x_{i_\alpha,\alpha}\right)$ with $\rho_\alpha = \frac{1}{16}d_h\left(x_{i_\alpha,\alpha}, {\mathcal S}_\alpha\setminus \left\{x_{i_\alpha,\alpha}\right\}\right)$ as in claim \ref{claim3}.

\begin{claim}\label{claim4}
If 
$$\rho_\alpha^3\sup_{B_{x_\alpha}\left(8\rho_\alpha\right)} \left(u_\alpha^6 + \left\vert {\mathcal L}_h W_\alpha\right\vert_h\right)\le C_1$$
for some $C_1>0$, then there exists $C_2>0$ such that 
$$\rho_\alpha^{\frac{1}{2}}u_\alpha\ge C_2 \hbox{ in }B_{\xa}\left(4\rho_\alpha\right)\hskip.1cm.$$
\end{claim}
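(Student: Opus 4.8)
The plan is to rescale the solution around $x_\alpha$ at the scale $\rho_\alpha$ and to run the Harnack argument from the proof of Claim \ref{claim3}. Let $\mu_\alpha>0$ be defined by $u_\alpha(x_\alpha)^6+|\mathcal{L}_h W_\alpha|_h(x_\alpha)=\mu_\alpha^{-3}$. Combining \eqref{eq-ba-2} with the hypothesis of the claim applied at the point $x_\alpha\in B_{x_\alpha}(8\rho_\alpha)$ gives $\frac1{C'}\rho_\alpha\le\mu_\alpha\le C'\rho_\alpha$ for some $C'>0$, so $\rho_\alpha$ is, up to a bounded factor, the natural concentration scale at $x_\alpha$ and no blow-up finer than $\rho_\alpha$ occurs there. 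Following \eqref{eq-claim3-5}--\eqref{eq-claim3-6} with $\mu_\alpha$ replaced by $\rho_\alpha$, I would set $\hat u_\alpha=\rho_\alpha^{1/2}u_\alpha[x_\alpha](\rho_\alpha x)U(\rho_\alpha x)$ and $\hat W_\alpha=\rho_\alpha^2 U(\rho_\alpha x)^{-4}W_\alpha[x_\alpha](\rho_\alpha x)$, so that $(\hat u_\alpha,\hat W_\alpha)$ solves the rescaled system \eqref{eq-claim3-7}--\eqref{eq-claim3-8} with $\mu_\alpha$ replaced by $\rho_\alpha$ throughout.

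Next I would record the a priori bounds on a fixed ball. Fix $R_0>0$, depending only on the constants in \eqref{eq-ba-1}--\eqref{eq-ba-2} and on $C_1$, so that for $\alpha$ large $B_{x_\alpha}(4\rho_\alpha)\subset\pi_{x_\alpha}^{-1}\bigl(\rho_\alpha B_0(R_0)\bigr)\subset B_{x_\alpha}(8\rho_\alpha)$. On $B_0(R_0)$ the hypothesis of the claim gives $\hat u_\alpha^6\le C_1 U(\rho_\alpha x)^6\le 8C_1$, hence $\hat u_\alpha\le C_3$; similarly $|\mathcal{L}_\xi\hat W_\alpha|_\xi$ is uniformly bounded, and \eqref{eq-ba-1} yields the decay $|x|^3\bigl(\hat u_\alpha^6+|\mathcal{L}_\xi\hat W_\alpha|_\xi\bigr)\le C$. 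Moreover $\hat u_\alpha(0)=\sqrt2\,\rho_\alpha^{1/2}u_\alpha(x_\alpha)\ge\sqrt2/C>0$ by \eqref{eq-ba-2}, and $\nabla\hat u_\alpha(0)=0$ since $\nabla u_\alpha(x_\alpha)=0$ and $0$ is a critical point of $U$.

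I would then pass to the limit in the coefficients of the scalar equation. As in the proof of Claim \ref{claim3}, the terms $\rho_\alpha^2\hat h_\alpha$, $\rho_\alpha^3\hat U_\alpha$ (with $\hat U_\alpha$ bounded, $U_\alpha$ converging in $C^0$) and $\rho_\alpha^6 U(\rho_\alpha x)^{12}b_\alpha[x_\alpha](\rho_\alpha x)$ all tend to $0$ in $L^\infty_{loc}$, while $\hat f_\alpha\to f_0(x_0)>0$. Since the right-hand side of the $\hat W_\alpha$-equation in \eqref{eq-claim3-7} is $O(\rho_\alpha)$ in $L^\infty_{loc}$, elliptic estimates for the conformal Killing operator (after subtracting a conformal Killing field, which leaves $\mathcal{L}_\xi\hat W_\alpha$ unchanged) show that $\mathcal{L}_\xi\hat W_\alpha$ is bounded, hence so is $\hat a_\alpha$. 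Thus $\hat u_\alpha$ is a nonnegative, uniformly bounded solution of the scalar equation of \eqref{eq-claim3-7} with bounded coefficients, $\hat f_\alpha$ bounded away from $0$, and $\hat u_\alpha(0)$ bounded away from $0$; the Harnack inequality of Proposition \ref{prop-HI} — invoked exactly as in Claim \ref{claim3} — then gives a constant $C_2'>0$ with $\hat u_\alpha\ge C_2'$ on $B_0(R_0)$. Undoing the rescaling, for $y\in B_{x_\alpha}(4\rho_\alpha)$ one writes $y=\pi_{x_\alpha}^{-1}(\rho_\alpha x)$ with $|x|\le R_0$, and since $U(\rho_\alpha x)\le\sqrt2$ one gets $\rho_\alpha^{1/2}u_\alpha(y)=\hat u_\alpha(x)/U(\rho_\alpha x)\ge C_2'/\sqrt2=:C_2>0$, which is the claim.

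The main obstacle is the last analytic step: ensuring, through Proposition \ref{prop-HI}, that $\hat u_\alpha$ does not develop at the origin a spike at a scale finer than $\rho_\alpha$. The bound $\frac1{C'}\rho_\alpha\le\mu_\alpha\le C'\rho_\alpha$ rules this out at the level of the center value $\hat u_\alpha(0)$, but propagating positivity to the whole of $B_0(R_0)$ requires a Harnack-type estimate for the scalar equation that must cope with the singular coupling term $\hat a_\alpha\hat u_\alpha^{-7}$ — the one place where the two equations of \eqref{eq-claim3-7} genuinely interact. Everything else is largely routine, following the blow-up analysis of Claim \ref{claim3}.
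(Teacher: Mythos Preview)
Your proof is correct and follows the paper's approach: rescale by $\rho_\alpha$, note that the hypothesis gives uniform upper bounds on $\hat u_\alpha$ and on $|\mathcal L_\xi\hat W_\alpha|$ (hence on $\hat a_\alpha$) over a fixed ball while \eqref{eq-ba-2} bounds $\hat u_\alpha(0)$ from below, and apply Proposition~\ref{prop-HI}. The paper is even terser: it disposes of the case $\rho_\alpha\not\to 0$ directly via Claim~\ref{claim1}, and for $\rho_\alpha\to 0$ it invokes Proposition~\ref{prop-HI} immediately---your introduction of $\mu_\alpha$, the discussion of coefficients tending to zero, and the elliptic estimate on $\hat W_\alpha$ are all unnecessary, since only $L^\infty$ boundedness of the coefficients enters Proposition~\ref{prop-HI}, and that is read off directly from the claim's hypothesis. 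Your closing concern is unfounded: Proposition~\ref{prop-HI} is tailored precisely to absorb the $\hat a_\alpha\hat u_\alpha^{-7}$ term once $\|\hat a_\alpha\|_{L^\infty}$ and $\|\hat u_\alpha\|_{L^\infty}$ are bounded, both of which you already have.
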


\medskip {\it Proof }- If $\rho_\alpha\not\to 0$ as $\alpha\to +\infty$, it is a simple consequence of Claim \ref{claim1}. If $\rho_\alpha\to 0$ as $\alpha\to +\infty$, we want to apply Harnack's inequality to $\rho_\alpha^{\frac{1}{2}}u_\alpha\left[x_\alpha\right]\left(\rho_\alpha\, \cdot\,\right)$. It is uniformly bounded in the $\rho_\alpha^{-1}$-dilated image by the stereographic projection of the ball $B_{x_\alpha}\left(8\rho_\alpha\right)$ and ${\ds \rho_\alpha^{\frac{1}{2}}u_\alpha\left[x_\alpha\right](0)\ge \frac{\sqrt{2}}{C}}$ thanks to the assumption of the claim. In this same ball, $\rho_\alpha^3 \left\vert {\mathcal L}_\xi W_\alpha\left[x_\alpha\right]\right\vert$ is also uniformly bounded thanks to the assumption of the claim. Thus we can use Harnack's inequality, see proposition \ref{prop-HI}, together with (\ref{eqconv}) to conclude. \hfill $\diamondsuit$

\begin{claim}\label{claim5}
If 
$$\rho_\alpha^3\sup_{B_{x_\alpha}\left(8\rho_\alpha\right)} \left(u_\alpha^6 + \left\vert {\mathcal L}_h W_\alpha\right\vert_h\right)\to +\infty \hbox{ as }\alpha\to +\infty\hskip.1cm,$$
then $\rho_\alpha\to 0$ as $\alpha\to +\infty$ and 
$$\sup_{B_0\left(\rho_\alpha\right)} \left\vert \frac{u_\alpha\left[x_\alpha\right]U}{B_\alpha}- 1 \right\vert \to 0\hbox { as }\alpha\to +\infty\hskip.1cm,$$
where 
$$B_\alpha(x) = \sqrt{2}\mu_\alpha^{\frac{1}{2}}\left(\mua^2 + \frac{4f_\alpha\left(x_\alpha\right)}{3} \left\vert x\right\vert^2\right)^{-\frac{1}{2}}$$
with 
$$u_\alpha\left(x_\alpha\right)=\ma^{-\frac{1}{2}}\to +\infty \hbox{ as }\alpha\to +\infty$$
and ${\ds \frac{\rho_\alpha}{\mu_\alpha}\to +\infty}$ as $\alpha\to +\infty$.
\end{claim}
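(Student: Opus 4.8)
The plan is to carry out the concentration analysis around $\xa$ at the natural scale $\ma=u_\alpha(\xa)^{-2}$ and then to upgrade the resulting local description of the bubble to a quantitative statement on $B_0(\rho_\alpha)$. The conclusion splits into four points: (a) $\ma\to 0$; (b) after rescaling, $(u_\alpha,W_\alpha)$ converges in $C^1_{loc}$ to a standard bubble, with $\mathcal L_\xi\tWa\to 0$; (c) $\rho_\alpha/\ma\to +\infty$; and (d) the estimate $\sup_{B_0(\rho_\alpha)}\big|u_\alpha[\xa]U/B_\alpha-1\big|\to 0$, from which $\rho_\alpha\to 0$ will follow.

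\emph{Points (a) and (b).} For (a) I argue by contradiction. If $u_\alpha(\xa)$ stays bounded, then $\rho_\alpha$ is bounded away from $0$ by \eqref{eq-ba-2}, while the hypothesis of the claim forces, through the growth control \eqref{eq-ba-1} (and the second equation of \eqref{equationEalpha}, which prevents $|\mathcal L_hW_\alpha|_h$ from blowing up while $u_\alpha$ stays bounded), the existence of a point $\zeta_\alpha\in B_{\xa}(8\rho_\alpha)$ with $u_\alpha(\zeta_\alpha)\to +\infty$; applying \eqref{eq-ba-1} at $\zeta_\alpha$ shows $d_h(\zeta_\alpha,\xa)\to 0$ at a rate comparable to the concentration scale at $\zeta_\alpha$, so that $\xa$ lies in the core of the bubble forming at $\zeta_\alpha$, and Harnack's inequality (Proposition~\ref{prop-HI}) then yields $u_\alpha(\xa)\to +\infty$, a contradiction. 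Once $\ma\to 0$ is known, I rescale exactly as in the proof of Claim~\ref{claim3}, setting $\tua=\ma^{1/2}u_\alpha[\xa](\ma\,\cdot)U(\ma\,\cdot)$ and $\tWa$ accordingly, so that $(\tua,\tWa)$ solves \eqref{eq-claim3-7} with $\tua(0)=\sqrt 2$ and $\nabla\tua(0)=0$. The Green-representation argument already used for Claim~\ref{claim3} — now legitimate since $\ma\to 0$ makes the extra coefficients in \eqref{eq-claim3-7} vanish — gives $\mathcal L_\xi\tWa\to 0$ in $L^\infty_{loc}$, while \eqref{eq-ba-1} rescales to a local $L^\infty$ bound on $\tua$; passing to the limit and using the Caffarelli-Gidas-Spruck classification \cite{CaGiSp} together with $\nabla\tua(0)=0$ and $\tua(0)=\sqrt 2$ identifies the limit as $\tua\to U\big(\sqrt{4f_0(x_0)/3}\,\cdot\big)$ in $C^1_{loc}(\mathbb{R}^3)$, where $\xa\to x_0$. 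Unwinding the rescaling yields, for every fixed $R>0$, $\sup_{B_0(R\ma)}\big|u_\alpha[\xa]U/B_\alpha-1\big|\to 0$.

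\emph{Points (c) and (d), the main difficulty.} Point (c) is quick: if $\rho_\alpha/\ma$ stayed bounded along a subsequence, then $B_{\xa}(8\rho_\alpha)$ would lie in a fixed dilate of the bubble, on which (b) gives $u_\alpha^6+|\mathcal L_hW_\alpha|_h\le C\ma^{-3}$, so $\rho_\alpha^3\sup_{B_{\xa}(8\rho_\alpha)}\big(u_\alpha^6+|\mathcal L_hW_\alpha|_h\big)\le C(\rho_\alpha/\ma)^3$ would stay bounded, against the hypothesis. The real work is (d): propagating the bubble estimate from the fixed balls $B_0(R\ma)$ to all of $B_0(\rho_\alpha)$, i.e. — after rescaling — to a ball of radius $\rho_\alpha/\ma\to +\infty$. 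I would run a continuity (barrier) argument: writing $v_\alpha=u_\alpha[\xa]U$, let $s_\alpha\in[R_\alpha\ma,\rho_\alpha]$, with $R_\alpha\to+\infty$ slowly enough that the estimate of (b) holds on $B_0(R_\alpha\ma)$, be the largest radius with $\sup_{B_0(s_\alpha)}|v_\alpha/B_\alpha-1|\le\eta$, and assume $s_\alpha<\rho_\alpha$. From \eqref{equationEalpha} and \eqref{eqlapfctstereo}, on $B_0(\rho_\alpha)$,
\[\Delta_\xi v_\alpha=f_\alpha[\xa]\,v_\alpha^5+U^{12}a_\alpha[\xa]\,v_\alpha^{-7}+U^4\Big(\tfrac34-h_\alpha[\xa]\Big)v_\alpha\hskip.1cm.\]
The growth bound \eqref{eq-ba-1} controls $|\mathcal L_hW_\alpha|_h$, hence $a_\alpha$, and the uniform positivity $u_\alpha\ge\eps_0$ of Claim~\ref{claim1} controls $v_\alpha^{-7}$, so that on $B_0(s_\alpha)$ — where $v_\alpha$ is comparable to $B_\alpha$, and in particular remains large — the last two terms are lower order with respect to $f_\alpha[\xa]v_\alpha^5$. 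Comparing $v_\alpha$ with the radial super- and subsolutions $(1\pm\eta)B_\alpha$, which solve $\Delta_\xi w=f_\alpha(\xa)w^5$ up to such lower-order terms, via the maximum principle then improves the bound on $B_0(s_\alpha)$ to $o(1)$, contradicting the maximality of $s_\alpha$; hence $s_\alpha=\rho_\alpha$, which is the desired estimate. Finally $\rho_\alpha\to 0$: were it not so, evaluating this estimate at a point at fixed distance from $\xa$ would force $u_\alpha\to 0$ there, since $B_\alpha\to 0$ pointwise away from the origin, contradicting Claim~\ref{claim1}. The genuinely delicate step is (d): the error $U^{12}a_\alpha v_\alpha^{-7}$ is \emph{not} uniformly small far from the concentration point, so the barrier must be designed so that the comparison is pushed out only to where $v_\alpha$ stays large — precisely the range in which \eqref{eq-ba-2} and the smallness of $\rho_\alpha$ become available.
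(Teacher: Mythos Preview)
Your outline correctly identifies the four ingredients, but the argument for (d) — which is the heart of the claim and of the whole paper — does not work as written, and there is also a technical slip in (a)--(b).

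\textbf{On (a)--(b).} You propose to rescale directly at scale $\ma=u_\alpha(\xa)^{-2}$ and rerun the argument of Claim~\ref{claim3}. The difficulty is that the weak estimate \eqref{eq-ba-1} gives you $|x|^{3}\bigl(\tua^6+|\mathcal L_\xi\tWa|\bigr)\le C$ after rescaling, i.e.\ a bound \emph{away from the origin} only; near $0$ you know $\tua(0)=\sqrt 2$ but you have no a~priori control on $|\mathcal L_\xi\tWa|$, so neither the Green--representation step nor Harnack applies in a neighbourhood of $0$. The paper (Claim~\ref{claim-lba1}) avoids this by first introducing the scale $\tmua$ defined through the \emph{supremum} in \eqref{eq-lba-2}; this gives a genuine $L^\infty$ bound \eqref{eq-claim-lba1-10} on all of $B_0(R)$, runs the Claim~\ref{claim3} argument, and only at the end uses $\nabla\tua(0)=0$ to identify $\tilde y_0=0$ and hence $\ma/\tmua\to 1$. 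Your point (a) is then a byproduct.

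\textbf{On (d).} The proposed barrier argument fails for two independent reasons.
\begin{itemize}
\item[(i)] \emph{Wrong monotonicity of the barrier.} With the sign convention $\Delta_\xi=-\mathrm{div}\nabla$, one has $\Delta_\xi B_\alpha=f_\alpha(\xa)B_\alpha^5$, so for $c>1$,
\[
\Delta_\xi(cB_\alpha)=c\,f_\alpha(\xa)B_\alpha^5=c^{-4}f_\alpha(\xa)(cB_\alpha)^5<f_\alpha(\xa)(cB_\alpha)^5\hskip.1cm,
\]
i.e.\ $(1+\eta)B_\alpha$ is a \emph{sub}solution of $\Delta_\xi w=f_\alpha(\xa)w^5$, not a supersolution. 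Likewise $(1-\eta)B_\alpha$ is a supersolution. Thus neither can serve as a barrier in the direction you need. This is exactly the obstruction created by the critical exponent, and it is why a Poho\v zaev identity, not a maximum principle, is the right tool.
\item[(ii)] \emph{The ``lower-order'' terms are not lower order.} On $\partial B_0(s_\alpha)$ with $s_\alpha\gg\ma$, one has $v_\alpha\asymp B_\alpha\asymp\ma^{1/2}s_\alpha^{-1}$, while \eqref{eq-ba-1} only gives $|\mathcal L_\xi\hWa|\lesssim |x|^{-3}$, hence $\haa v_\alpha^{-7}\gtrsim s_\alpha^{-6}\cdot s_\alpha^{7}\ma^{-7/2}=s_\alpha\,\ma^{-7/2}$, whereas $v_\alpha^{5}\asymp\ma^{5/2}s_\alpha^{-5}$. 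The ratio $(\haa v_\alpha^{-7})/v_\alpha^{5}\gtrsim (s_\alpha/\ma)^6\to+\infty$. So without further work the negative-power term dominates far from the concentration point, and your comparison breaks down precisely where you need it.
\end{itemize}

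The paper's route to (d) is substantially more involved. One fixes $\eps>0$ and defines $r_\alpha$ as the largest radius $\le\hpa$ on which $v_\alpha<(1+\eps)B_\alpha$. Harnack (Claim~\ref{claim-lba3}) extends the upper bound $v_\alpha\le C_3B_\alpha$ to $B_0(7r_\alpha)$; a Green--representation argument (Claim~\ref{claim-lba2}) gives the matching lower bound $v_\alpha\ge(1-\eta_\alpha)B_\alpha$. The key analytic step is then Claims~\ref{claim-lba3bis}--\ref{claim-lba4}: an integral estimate on $|\mathcal L_\xi\hWa|^2B_\alpha^{-7}$, the introduction of the model field $V_\alpha$ solving $\overrightarrow{\Delta_\xi}V_\alpha=B_\alpha^6\hXa(0)$, and a bootstrap on the vector equation leading to the sharp pointwise bound
\[
|\mathcal L_\xi\hWa|(x)\le C_6\,\frac{\ma^2}{r_\alpha^{3}(\ma^2+|x|^2)}\quad\text{in }B_0(2r_\alpha)\hskip.1cm.
\]
Only with this in hand can one control $\haa v_\alpha^{-7}$ (see \eqref{eq-lba-32}). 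Rescaling at scale $r_\alpha$ then shows $\cva\to\lambda|x|^{-1}+\beta(x)$ with $\beta\ge0$ super\-harmonic, and a Poho\v zaev identity (Claim~\ref{claim-lba5}) forces $\beta(0)\le 0$, hence $\beta(0)=0$ and $r_\alpha=\hpa$. Since $\eps>0$ was arbitrary, this combined with Claim~\ref{claim-lba2} gives the conclusion; $\rho_\alpha\to 0$ follows from $r_\alpha=O(\sqrt{\ma})$, see \eqref{eq-lba-11bis}.

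In short: your steps (a)--(c) are morally right but need to be reorganised around the auxiliary scale $\tmua$; step (d) requires both the sharp decay of $\mathcal L_\xi\hWa$ obtained in Claims~\ref{claim-lba3bis}--\ref{claim-lba4} and the Poho\v zaev argument of Claim~\ref{claim-lba5}, neither of which can be replaced by a comparison with $(1\pm\eta)B_\alpha$.
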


\medskip{\it Proof} - We postpone the proof of this claim to Section \ref{sectionlba}. This is the core of the analysis of this paper. 

\medskip We are now in position to conclude the proof of Theorem \ref{thm2}. We let 
\beq\label{eq-concl-1}
16 d_\alpha = \min_{i\neq j} d_h\left(\xia,\xja\right)
\eeq
and we assume that 
\begin{equation}\label{eq-concl-2}
d_\alpha\to 0\hbox{ as }\alpha\to +\infty\hskip.1cm,
\end{equation}
where the $\xia$'s, $i=1,\dots, N_\alpha$, are those of claim \ref{claim3} and we assume, up to reordering, that 
$$d_h\left(x_{1,\alpha}, x_{2,\alpha}\right)=16 d_\alpha\hskip.1cm.$$
Note that, by Claim \ref{claim5} above and thanks to Claim \ref{claim1}, we know that $N_\alpha\ge 2$ for $\alpha$ large. We let 
$$\tua(x) = d_\alpha^{\frac{1}{2}}u_\alpha\left[x_{1,\alpha}\right]\left(d_\alpha x\right)U\left(d_\alpha x\right)\hskip.1cm.$$
Equation (\ref{equationEalpha}) becomes 
\begin{equation}\label{eq-concl-3}
\Delta_\xi \tua + d_\alpha^2 \tha \tua =\tfa \tua^{5} +\taa\tua^{-7}\hskip.1cm,
\end{equation}
where
$$\taa = \bigl(64+o(1)\bigr) \left\vert {\mathcal L}_\xi \tWa\right\vert^2+o(1) $$
in any compact set with 
\begincal
\tWa(x)&=& d_\alpha^2 U\left(d_\alpha x\right)^{-4} W_\alpha\left[x_{1,\alpha}\right]\left(d_\alpha x\right)\hskip.1cm,\\
\tfa(x)&=& f_\alpha\left[x_\alpha\right]\left(d_\alpha x\right)\hskip.1cm,\\
\tha(x)&=& h_\alpha\left[x_\alpha\right]\left(d_\alpha x\right)\hskip.1cm.
\fincal
We shall also let 
$$\tilde{x}_{i,\alpha}=d_\alpha^{-1}\pi_{x_{1,\alpha}}\left(x_{i,\alpha}\right)$$
for $i=1,\dots, N_\alpha$. Let us fix $R>0$ large. We reorder the concentration points such that 
$$\left\vert \tilde{x}_{i,\alpha}\right\vert \le R \hbox{ if and only if }1\le i\le N_R$$
for some $N_R\ge 2$ fixed (up to a subsequence and for $\alpha$ large). For any $i\in \left\{1,\dots,N_R\right\}$, we have the following alternative, thanks to Claims \ref{claim4} and \ref{claim5}~: 
\begin{equation}\label{eq-alternative}
\begin{array}{l}
{\ds \hbox{ either there exists }C_i>0\hbox{ such that } C_i^{-1}\le \tua \le C_i\hbox{ in }B_{\tilde{x}_{i,\alpha}}\left(\frac{1}{2}\right)}\\
{\ds \quad \hbox{ or } \sup_{B_{\tilde{x}_{i,\alpha}}\left(\frac{1}{2}\right)} \left\vert \frac{\tua}{\tilde{B}_{i,\alpha}}-1\right\vert \to 0\hbox{ as }\alpha\to +\infty\hskip.1cm,}
\end{array}
\end{equation}
where 
$$\tilde{B}_{i,\alpha}(x)= \sqrt{2}\mu_{i,\alpha}^{\frac{1}{2}}\left(\mu_{i,\alpha}^2 + \frac{4f_\alpha\left(x_{i,\alpha}\right)}{3}\vert x-\tilde{x}_{i,\alpha}\vert^2\right)^{-\frac{1}{2}}$$
with $\sqrt{2}\mu_{i,\alpha}^{-\frac{1}{2}}=\tua\left(\tilde{x}_{i,\alpha}\right)$. These two alternatives are exclusive one to each other and the second one appears if and only if $\tua\left(\tilde{x}_{i,\alpha}\right)\to +\infty$ as $\alpha\to +\infty$. 

Using the Green's representation formula and (\ref{eq-concl-3}), we can write that 
\begincal
\tilde{u}_\alpha\left(x\right)& \ge & \frac{1}{4\pi} \int_{B_{x}(10R)}\left(\frac{1}{\left\vert x-y\right\vert}-\frac{1}{10R}\right) \tfa(y) \tua(y)^5\, dy\\
&& -\frac{d_\alpha^2}{4\pi}\int_{B_{x}(10R)}\left(\frac{1}{\left\vert x-y\right\vert}-\frac{1}{10R}\right) \tha(y) \tua(y)\, dy
\fincal
since $\tfa>0$ and $\tua >0$. By the dominated convergence theorem, since 
$$\tilde{u}_\alpha \le D_1 d_\xi\left(x,\left\{\tilde{x}_{i,\alpha}\right\}\right)^{-\frac{1}{2}}$$
for some $D_1>0$, we have that there exists $D_2>0$ which does not depend on $\alpha$ and $x$ such that  
$$\frac{d_\alpha^2}{4\pi}\int_{B_x\left(10 R\right)}\left(\frac{1}{\left\vert x-y\right\vert}-\frac{1}{10 R}\right) \tha(y) \tua(y)\, dy\le D_2 d_\alpha^2$$
for all $x$ in $B_0(R)$. Thus the above becomes 
\begin{equation}\label{eq-concl-3bis}
\tilde{u}_\alpha\left(x\right) \ge \frac{1}{4\pi} \int_{B_{x}(10 R)}\left(\frac{1}{\left\vert x-y\right\vert}-\frac{1}{10 R} \right)\tfa(y) \tua(y)^5\, dy - D_2 d_\alpha^2
\end{equation}
for all $x\in B_0(R)$.

If the first alternative in (\ref{eq-alternative}) holds for some $i\in \left\{1,\dots,N_R\right\}$, then we can write that 
\begincal
&&\frac{1}{4\pi} \int_{B_{x}(10 R)}\left(\frac{1}{\left\vert x-y\right\vert}-\frac{1}{10 R} \right)\tfa(y) \tua(y)^5\, dy\\
&&\quad \ge \frac{1}{4\pi}C_i^{-5} \int_{B_{\tilde{x}_{i,\alpha}}\left(\frac{1}{2}\right)}\left(\frac{1}{\left\vert x-y\right\vert}-\frac{1}{10 R} \right)\tfa(y) \, dy\\
&&\quad\ge  D_3
\fincal
for some $D_3>0$ independent of $\alpha$ and $x$ for all $x\in B_0(R)$ since $\tfa$ is uniformly positive. This implies thanks to (\ref{eq-concl-2}) and (\ref{eq-concl-3bis}) that 
$$\tilde{u}_\alpha\ge \frac{1}{2} D_3$$
for $\alpha$ large for all $x\in B_0(R)$. This proves that the second alternative in (\ref{eq-alternative}) can not happen for any $j\in \left\{1,\dots,N_R\right\}$. 

So far we have proved that either the first alternative in (\ref{eq-alternative}) holds for all $i\in \left\{1,\dots,N_R\right\}$, in which case $\tua$ is uniformly bounded in $B_0\left(\frac{R}{2}\right)$, or that the second alternative in (\ref{eq-alternative}) holds for all $i\in \left\{1,\dots,N_R\right\}$. 

We claim now that we are in the first situation, namely that the second alternative in (\ref{eq-alternative}) can not hold for all $i\in \left\{1,\dots,N_R\right\}$. Indeed, assume it holds for some $i\in \left\{1,\dots,N_R\right\}$, then it holds for all of them. Then, we can use the estimate of the second alternative in (\ref{eq-alternative}) to write that 
$$\frac{1}{4\pi} \int_{B_{\tilde{x}_{i,\alpha}}\left(\frac{1}{2}\right)}\left(\frac{1}{\left\vert x-y\right\vert}-\frac{1}{10 R} \right)\tfa(y) \tua(y)^5\, dy\ge \left(1+o(1)\right) \tilde{B}_{i,\alpha}(x) - \frac{D_4}{R} \mu_{i,\alpha}^{\frac{1}{2}}$$
for $i=1,2$ where $D_4>0$ is some constant independent of $R$, $\alpha$ and $x$. This proves thanks to (\ref{eq-concl-3bis}) that 
$$\tilde{u}_\alpha(x) \ge \left(1+o(1)\right) \left( \tilde{B}_{1,\alpha}(x) +\tilde{B}_{2,\alpha}(x)\right)-\frac{D_4}{R} \left(\sqrt{\mu_{1,\alpha}}+\sqrt{\mu_{2,\alpha}}\right) - D_2 d_\alpha^2\hskip.1cm.$$
Applying this to some $x$ such that $\vert x\vert \le \frac{1}{4}$, we get thanks to the second alternative of (\ref{eq-alternative}) for $i=1$ that 
$$1+ o(1) \ge \left(1+ o(1)\right)\left(1 +\frac{\tilde{B}_{2,\alpha}(x)}{\tilde{B}_{1,\alpha}(x)}\right) - \frac{D_4}{R} \frac{\left(\sqrt{\mu_{1,\alpha}}+\sqrt{\mu_{2,\alpha}}\right)}{\tilde{B}_{1,\alpha}(x)}-D_2 \frac{d_\alpha^2}{\tilde{B}_{1,\alpha}(x)}\hskip.1cm.$$
Thus we get that 
$$\frac{\tilde{B}_{2,\alpha}(x)}{\tilde{B}_{1,\alpha}(x)} \le o(1) + \frac{D_4}{R} \frac{\left(\sqrt{\mu_{1,\alpha}}+\sqrt{\mu_{2,\alpha}}\right)}{\tilde{B}_{1,\alpha}(x)}-D_2 \frac{d_\alpha^2}{\tilde{B}_{1,\alpha}(x)}\hskip.1cm.$$
If $x\neq 0$, we have that 
$$\frac{\tilde{B}_{2,\alpha}(x)}{\tilde{B}_{1,\alpha}(x)} = \left(1+o(1)\right)\frac{\vert x\vert \sqrt{\mu_{2,\alpha}}}{\vert x- \tilde{x}_2\vert \sqrt{\mu_{1,\alpha}}}\hskip.1cm,$$
where $\tilde{x}_2=\lim_{\alpha\to +\infty} \tilde{x}_{2,\alpha}$ is such that $\left\vert \tilde{x}_2\right\vert=16$. We also have that 
$$\frac{\left(\sqrt{\mu_{1,\alpha}}+\sqrt{\mu_{2,\alpha}}\right)}{\tilde{B}_{1,\alpha}(x)} \le D_5 \vert x\vert\left(1+\frac{\sqrt{\mu_{2,\alpha}}}{\sqrt{\mu_{1,\alpha}}}\right)$$
for some $D_5>0$ independent of $x$ and $\alpha$. At last, we know that 
$$\frac{d_\alpha^2}{\tilde{B}_{1,\alpha}(x)}\to 0\hbox{ as }\alpha\to +\infty$$
thanks to claim \ref{claim1} and the second alternative in (\ref{eq-alternative}) which tells us that $d_\alpha =O\left(\mu_{1,\alpha}\right)$. Thus we arrive to 
$$\frac{\vert x\vert \sqrt{\mu_{2,\alpha}}}{\vert x- \tilde{x}_2\vert \sqrt{\mu_{1,\alpha}}} \le o(1) + \frac{D_4 D_5}{R} \vert x\vert\left(1+\frac{\sqrt{\mu_{2,\alpha}}}{\sqrt{\mu_{1,\alpha}}}\right) \left(1+o(1)\right)\hskip.1cm.$$
Up to choose $R$ large enough, this leads, letting $x\to 0$, to 
$$\limsup_{\alpha\to +\infty}\frac{\sqrt{\mu_{2,\alpha}}}{\sqrt{\mu_{1,\alpha}}}\le \frac{16 D_4 D_5}{R-16 D_4 D_5}\hskip.1cm.$$
Using the same arguments exchanging the role of $\tilde{x}_{1,\alpha}$ and $\tilde{x}_{2,\alpha}$, we obtain also that
$$\limsup_{\alpha\to +\infty}\frac{\sqrt{\mu_{1,\alpha}}}{\sqrt{\mu_{2,\alpha}}}\le \frac{16 D_4 D_5}{R-16 D_4 D_5}\hskip.1cm.$$
This clearly leads to a contradiction as soon as $R$ is chosen large enough so that the right-hand side is less than $1$. 

Thus we have proved that, up to choose $R$ large enough, only the first alternative in (\ref{eq-alternative}) can happen for all $i\in \left\{1,\dots,N_R\right\}$ and Claim \ref{claim3} and Proposition \ref{prop-HI} show that 
$$C_K^{-1}\le  \tua \le C_K$$
for all compact set $K$ of ${\mathbb R}^3$. 

Moreover, we know that ${\mathcal L}_\xi \tWa$ is uniformly bounded in any compact set of ${\mathbb R}^3$. Then we can prove as during claim \ref{claim3} that, after passing to a subsequence,
$${\mathcal L}_\xi \tWa \to 0\hbox{ in }C^0_{loc}\left({\mathbb R}^3\right)\hbox{ as }\alpha\to +\infty$$
and that 
$$\tua \to \tilde{u}\hbox{ in }C^1_{loc}\left({\mathbb R}^3\right)\hbox{ as }\alpha\to +\infty$$
with
$$\Delta_\xi \tilde{u} = f_0\left(x_1\right)\tilde{u}^5\hbox{ in }{\mathbb R}^3\hskip.1cm.$$
Here, ${\ds x_1 = \lim_{\alpha\to +\infty} x_{1,\alpha}}$. Thus $\tilde{u}$ has only one critical point in ${\mathbb R}^3$, thanks to the classification result of \cite{CaGiSp}, but it should have at least two since $0$ and $\tilde{x}_{2,\alpha}$ were critical points of $\tua$. This is absurd and we have contradicted the fact that $d_\alpha\to 0$ as $\alpha\to +\infty$. 

\medskip Thus $d_\alpha \ge \delta_0$ and there is only a finite number of concentration points and for each of them, it is clear that the sequence $\left(u_\alpha\right)$ remains bounded in $B_0\left(\delta_0\right)$. Otherwise, we would be in the situation of Claim \ref{claim5} and $d_\alpha$ would have to go to $0$. It is then clear thanks to this and (iii) of Claim \ref{claim3} that the sequence  $\left(u_\alpha\right)$ is uniformly bounded on ${\mathbb S}^3$. And this ends the proof of the theorem thanks to Claim \ref{claim2}.

\section{Local blow up analysis - Proof of claim \ref{claim4}}\label{sectionlba}

In this section, we perform the local blow up analysis needed to prove claim \ref{claim5}. We assume that we have a sequence $\left(u_\alpha,W_\alpha\right)$ of solutions of (\ref{equationEalpha}) with the convergences (\ref{eqconv}) and (\ref{eqnonzero}). We also assume that we have sequences $\left(x_\alpha\right)$ of critical points of $u_\alpha$ and $\left(\rho_\alpha\right)$ of positive real numbers with $0<\rho_\alpha\le \frac{\pi}{16}$ such that
\begin{equation}\label{eq-lba-1}
d_h\left(x_\alpha,x\right)^{3} \left(u_\alpha(x)^6+ \left\vert {\mathcal L}_h W_\alpha(x)\right\vert_h\right) \le C_1\hbox{ for all }x\in B_{\xa}\left(8\rho_\alpha\right)
\end{equation}
for some $C_1>0$ independent of $\alpha$. We assume moreover that 
\beq\label{eq-lba-2}
\rho_\alpha^{3}\sup_{B_{\xa}\left(8\rho_\alpha\right)}\left(u_\alpha^6 + \left\vert {\mathcal L}_h W_\alpha\right\vert_h\right)\to +\infty\hbox{ as }\alpha\to +\infty\hskip.1cm.
\eeq
In this section, the constants $C_i$'s will always denote constants independent of $\alpha$. The constants $D_i$' s also but they will have nothing to do one with the other when changing of claims. 

\begin{claim}\label{claim-lba1}
We have that, up to a subsequence,  
$$\mu_\alpha^{\frac{1}{2}} u_\alpha\left
[x_\alpha\right]\left(\mu_\alpha x\right)\to  \left(1+\frac{4f\left(x_0\right)}{3}\vert x\vert^2\right)^{-\frac{1}{2}}\hbox{ in }C^{1,\eta}_{loc}\left({\mathbb R}^3\right)\hbox{ as }\alpha\to +\infty$$
and that 
$$\mu_\alpha^3 {\mathcal L}_\xi W_\alpha\left[x_\alpha\right]\left(\mu_\alpha x\right)\to 0\hbox{ in } C^0_{loc}\left({\mathbb R}^3\right)\hbox{ as }\alpha\to +\infty\hskip.1cm,$$
where 
$$u_\alpha\left(x_\alpha\right)=\mu_\alpha^{-\frac{1}{2}} \to +\infty\hbox{ as }\alpha\to +\infty$$
and ${\ds \lim_{\alpha\to +\infty} x_\alpha=x_0}$.
\end{claim}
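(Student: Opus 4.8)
The plan is to reproduce the blow-up analysis performed in the proof of Claim \ref{claim3}, the new difficulty being that the a priori bound \eqref{eq-lba-1} degenerates at the blow-up point, so the uniform local boundedness of the rescaled sequence near its centre has to be obtained separately — and this is the step that forces a simultaneous treatment of the two equations of \eqref{equationEalpha}.

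\emph{Set-up.} I would first note that $\mu_\alpha\to 0$ (and $\rho_\alpha/\mu_\alpha\to+\infty$): if $u_\alpha(x_\alpha)$ stayed bounded, \eqref{eq-lba-1}, Harnack's inequality (Proposition \ref{prop-HI}) and elliptic estimates on the $W$-equation would bound $u_\alpha^6+|\mathcal{L}_h W_\alpha|_h$ on $B_{x_\alpha}(8\rho_\alpha)$, contradicting \eqref{eq-lba-2} since $\rho_\alpha\le\pi/16$. Then I set $\tilde u_\alpha$, $\tilde W_\alpha$ as in \eqref{eq-claim3-5}--\eqref{eq-claim3-6}, so that $(\tilde u_\alpha,\tilde W_\alpha)$ solves \eqref{eq-claim3-7}--\eqref{eq-claim3-8} on balls whose radii go to $+\infty$, with $\tilde u_\alpha(0)=\sqrt 2$ and $\nabla\tilde u_\alpha(0)=0$ ($x_\alpha$ being a critical point of $u_\alpha$ and $U$ having a critical point at $0$); moreover \eqref{eq-lba-1} rescales to $|x|^{3}(\tilde u_\alpha^6+|\mathcal{L}_\xi\tilde W_\alpha|_\xi)\le C$, which controls $\tilde u_\alpha$ and $\mathcal{L}_\xi\tilde W_\alpha$ on annuli $B_0(R)\setminus B_0(\delta)$ but not near $0$.

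\emph{Local boundedness near $0$.} I would show $\tilde u_\alpha$ is bounded in $L^\infty_{loc}(\mathbb{R}^3)$ by contradiction. If $m_\alpha:=\max_{\overline{B_0(1)}}\tilde u_\alpha=\tilde u_\alpha(z_\alpha)\to+\infty$, the bound $\tilde u_\alpha\le C|x|^{-1/2}$ forces $z_\alpha\to 0$ and $|z_\alpha|\le Cm_\alpha^{-2}$. Rescaling again at $z_\alpha$ at the scale $\lambda_\alpha=m_\alpha^{-2}$, i.e. $\hat u_\alpha(y)=m_\alpha^{-1}\tilde u_\alpha(z_\alpha+\lambda_\alpha y)$, one has $\hat u_\alpha(0)=1$, $\hat u_\alpha\le 1$ on fixed balls, and in the equation for $\hat u_\alpha$ the curvature term carries the vanishing factor $\mu_\alpha^2\lambda_\alpha^2$ while the $\tilde a_\alpha$-term carries $\lambda_\alpha^6$, which together with \eqref{eq-lba-1} ($\tilde a_\alpha\le C(\mu_\alpha^6+|\mathcal{L}_\xi\tilde W_\alpha|_\xi^2)\le C|x|^{-6}$ near $0$) keeps the right-hand side controlled. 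As in Claim \ref{claim3} — Green's representation formula, the good-radius choice, the vector Green representation of Proposition \ref{vecteurGreen}, using that the source $\mu_\alpha\tilde u_\alpha^{6}\tilde X_\alpha$ has $L^1$-mass $o(1)$ since $\mu_\alpha\to 0$ — one gets $\mathcal{L}_\xi\tilde W_\alpha\to 0$ locally near $z_\alpha$ and then $\hat u_\alpha\to\hat u$ in $C^{1,\eta}_{loc}$, $\Delta_\xi\hat u=f_0(x_0)\hat u^5$, $0<\hat u\le 1$. This is absurd: the point $y_\alpha=-z_\alpha/\lambda_\alpha$ lies in a fixed ball and corresponds to $x=0$, where $\hat u_\alpha(y_\alpha)=m_\alpha^{-1}\sqrt 2\to 0$, while $\hat u$ is everywhere positive. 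Hence $\tilde u_\alpha$, and then by the vector Green representation also $\mathcal{L}_\xi\tilde W_\alpha$, is bounded on compact subsets of $\mathbb{R}^3$. \emph{This is the main obstacle}: unlike in Claim \ref{claim3}, \eqref{eq-lba-1} gives no bound near $0$, and excluding a faster concentration at $x_\alpha$ requires this second rescaling together with genuinely coupled control of $\tilde u_\alpha$ and the rescaled $1$-form.

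\emph{Passage to the limit.} The proof then ends as in Claim \ref{claim3}: Green's representation gives uniform bounds on $\int_{B_y(R)}|x-y|^{-1}\tilde a_\alpha$, hence on $\int_{B_y(R)\setminus B_y(R/2)}|x-y|^{-1}|\mathcal{L}_\xi\tilde W_\alpha|_\xi^2$, hence a good radius, and the vector Green representation gives $\mathcal{L}_\xi\tilde W_\alpha\to 0$ in $L^\infty_{loc}$; so $\tilde a_\alpha\to 0$, $\mu_\alpha^2\tilde h_\alpha\to 0$, $\tilde f_\alpha\to f_0(x_0)$, Harnack makes $\tilde u_\alpha$ uniformly positive on compacts, and elliptic theory yields (up to a subsequence) $\tilde u_\alpha\to\tilde u$ in $C^{1,\eta}_{loc}$ with $\Delta_\xi\tilde u=f_0(x_0)\tilde u^5$, $0<\tilde u\le\tilde u(0)=\sqrt 2$, $\nabla\tilde u(0)=0$ and $\tilde u(x)\to 0$ at infinity. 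The Caffarelli--Gidas--Spruck classification \cite{CaGiSp} then forces $\tilde u(x)=\sqrt 2\,(1+\frac{4f_0(x_0)}{3}|x|^2)^{-1/2}$, and dividing by $U(\mu_\alpha\,\cdot\,)\to\sqrt 2$ in $C^1_{loc}$ gives the first assertion; the second, $\mu_\alpha^3\mathcal{L}_\xi W_\alpha[x_\alpha](\mu_\alpha\,\cdot\,)\to 0$ in $C^0_{loc}$, follows from $\mathcal{L}_\xi\tilde W_\alpha\to 0$ via the conformal relation \eqref{eqLgstereo} (equivalently, from the term $|\mu_\alpha^3\tilde U_\alpha+U(\mu_\alpha\,\cdot\,)^6\mathcal{L}_\xi\tilde W_\alpha|_\xi^2$ in \eqref{eq-claim3-8} going to $0$).
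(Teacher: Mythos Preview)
Your route differs from the paper's, and the paper's is considerably simpler. Instead of rescaling at the scale $\mu_\alpha=u_\alpha(x_\alpha)^{-2}$, the paper rescales at the scale $\tilde\mu_\alpha$ defined by
\[
\tilde\mu_\alpha^{-3}=\sup_{B_{x_\alpha}(8\rho_\alpha)}\bigl(u_\alpha^6+|\mathcal L_hW_\alpha|_h\bigr),
\]
achieved at some $y_\alpha$. Assumption \eqref{eq-lba-1} then forces $d_h(x_\alpha,y_\alpha)\le C\tilde\mu_\alpha$, so $\tilde y_\alpha=\tilde\mu_\alpha^{-1}\pi_{x_\alpha}(y_\alpha)$ stays in a fixed ball; and by construction the rescaled pair satisfies $\tilde u_\alpha^6+|\mathcal L_\xi\tilde W_\alpha|_\xi\le 1$ \emph{everywhere}, with equality at $\tilde y_\alpha$. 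This is exactly the situation of Claim~\ref{claim3}, and the same argument (Green's representation, good radius, Proposition~\ref{vecteurGreen}, Harnack, Caffarelli--Gidas--Spruck) gives $\tilde u_\alpha\to\tilde u$ a bubble centred at $\tilde y_0=\lim\tilde y_\alpha$. Since $x_\alpha$ is a critical point of $u_\alpha$, $\nabla\tilde u(0)=0$, which forces $\tilde y_0=0$; hence $\mu_\alpha/\tilde\mu_\alpha\to 1$ and the claim follows. No second rescaling is needed, and $u_\alpha(x_\alpha)\to+\infty$ is a \emph{conclusion}, not a hypothesis to be established first.

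Your argument, by rescaling directly at $\mu_\alpha$, loses the uniform bound near $0$ and has to recover it by a second blow-up at $z_\alpha$. This step is more delicate than you indicate. At the second rescaling the coefficient $\hat a_\alpha=\lambda_\alpha^{6}\tilde a_\alpha(z_\alpha+\lambda_\alpha\,\cdot\,)$ is only known to satisfy $\hat a_\alpha(y)\lesssim |y-y_\alpha|^{-6}$ from \eqref{eq-lba-1}, which is \emph{not} bounded near $y_\alpha=-z_\alpha/\lambda_\alpha$; so Proposition~\ref{prop-HI} does not apply directly on balls containing $y_\alpha$, and one cannot immediately conclude $\hat u_\alpha\to\hat u>0$ there. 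You invoke the Claim~\ref{claim3} machinery to get $\mathcal L_\xi\hat W_\alpha\to 0$ first, but that argument used a uniform pointwise bound on $|\mathcal L_\xi\tilde W_\alpha|$ (equation \eqref{eq-claim3-9}) to control the coupling term $-6\mu_\alpha^2\frac{x^j}{1+\mu_\alpha^2|x|^2}(\mathcal L_\xi\tilde W_\alpha)_{ij}$ in the source of the vector equation; here you only have $|\mathcal L_\xi\hat W_\alpha|\lesssim |y-y_\alpha|^{-3}$, which is not locally integrable, and the weighted $L^2$ bound coming from the scalar Green representation does not obviously suffice to bound $\int|x-y|^{-2}|\mathcal L_\xi\hat W_\alpha|$. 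Your opening step ($\mu_\alpha\to 0$ via Harnack) has a similar circularity: Proposition~\ref{prop-HI} needs $a\in L^\infty$, which requires control on $|\mathcal L_hW_\alpha|$ near $x_\alpha$ that \eqref{eq-lba-1} does not give. These gaps may be closable with more work, but the paper's choice of rescaling scale bypasses them entirely.
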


\medskip {\bf Proof.} It is really similar to that of claim \ref{claim3}. We let $y_\alpha\in {\mathbb S}^3$ be such that
\begin{equation}\label{eq-claim-lba1-1}
u_\alpha\left(y_\alpha\right)^6+\left\vert {\mathcal L}_h W_\alpha\right\vert_h\left(y_\alpha\right)= \sup_{B_{x_\alpha}\left(8\rho_\alpha\right)}\left(u_\alpha^6+\left\vert {\mathcal L}_h W_\alpha\right\vert_h\right)\hskip.1cm.
\end{equation}
And we set 
\begin{equation}\label{eq-claim-lba1-2}
u_\alpha\left(y_\alpha\right)^6+\left\vert {\mathcal L}_h W_\alpha\right\vert_h\left(y_\alpha\right)=\tmua^{-3}\hskip.1cm.
\end{equation}
By (\ref{eq-lba-2}), we know that 
\begin{equation}\label{eq-claim-lba1-3}
\frac{\rho_\alpha}{\tmua}\to +\infty \hbox{ as }\alpha\to +\infty\hskip.1cm.
\end{equation}
We also have thanks to (\ref{eq-lba-1}) that 
$$d_h\left(x_\alpha,y_\alpha\right)^3 u_\alpha\left(y_\alpha\right)^6+d_h\left(x_\alpha,y_\alpha\right)^3 \left\vert {\mathcal L}_h W_\alpha\left(y_\alpha\right)\right\vert_h \le C_1$$
so that 
\begin{equation}\label{eq-claim-lba1-4}
\frac{d_h\left(x_\alpha,y_\alpha\right)}{\tmua}\le C_1^{\frac{1}{3}}\hskip.1cm.
\end{equation}
We set 
\begin{equation}\label{eq-claim-lba1-5}
\tilde{u}_\alpha = \tmua^{\frac{1}{2}}u_\alpha\left[x_\alpha\right]\left(\tmua x\right) U\left(\tmua x\right)
\end{equation}
and 
\begin{equation}\label{eq-claim-lba1-6}
\tilde{W}_\alpha = \tmua^2 U\left(\tmua x\right)^{-4} W_\alpha\left[x_\alpha\right]\left(\tmua x\right)
\end{equation}
for $x\in {\mathbb R}^3$. We set 
\begin{equation}\label{eq-claim-lba1-7}
\tya = \tmua^{-1}\pi_{x_\alpha}\left(y_\alpha\right)
\end{equation}
and we know thanks to (\ref{eq-claim-lba1-4}) that 
\begin{equation}\label{eq-claim-lba1-8}
\left\vert \tya\right\vert = O\left(1\right)
\end{equation}
so that, after passing to a subsequence,
\begin{equation}\label{eq-claim-lba1-9}
\tya\to \tilde{y}_0\hbox{ as }\alpha\to +\infty\hskip.1cm.
\end{equation}
Thanks to (\ref{eqLgstereo}), (\ref{eq-claim-lba1-1}) and (\ref{eq-claim-lba1-2}), we know that 
\begin{equation}\label{eq-claim-lba1-10}
U\left(\tmua x\right)^{-6}\tua\left(x\right)^6 + \left\vert {\mathcal L}_\xi \tilde{W}_\alpha\right\vert_\xi(x)\le 1 = U\left(\tmua \tya\right)^{-6}\tua\left(\tya\right)^6 + \left\vert {\mathcal L}_\xi \tilde{W}_\alpha\right\vert_\xi\left(\tya\right)
\end{equation}
for all $x\in {\mathbb R}^3$ such that ${\ds d_h\left(x_\alpha,\pi_{x_\alpha}^{-1}\left(\mu_\alpha x\right)\right)\le 8\rho_\alpha}$. We also have since $x_\alpha$ is a critical point of $u_\alpha$ that 
\begin{equation}\label{eq-claim-lba1-11}
\nabla \tua\left(0\right)=0\hskip.1cm.
\end{equation}
Then, equation (\ref{equationEalpha}) leads with (\ref{eqlapfctstereo}), (\ref{eqLgstereo}) and (\ref{eqlapvectstereo}) to 
\begin{equation}\label{eq-claim-lba1-12}
\left\{\begin{array}{l}
{\ds \Delta_\xi \tua + \tmua^2 \tilde{h}_\alpha \tilde{u}_\alpha = \tilde{f}_\alpha \tilde{u}_\alpha^5 + \frac{\tilde{a}_\alpha}{\tilde{u}_\alpha^7}}\\
\,\\
{\ds \left(\overrightarrow{\Delta_\xi} \tilde{W}_\alpha\right)_i = -6\tmua^2 \frac{x^j}{1+\tmua^2 \vert x\vert^2} \left({\mathcal L}_\xi \tilde{W}_\alpha\right)_{ij} + \tmua  \tilde{u}_\alpha^{6}\left(\tilde{X}_\alpha\right)_i +\tmua^4\left(\tilde{Y}_\alpha\right)_i}\\
\end{array}\right. 
\end{equation}
where 
\begin{equation}\label{eq-claim-lba1-13}
\begin{array}{l}
{\ds \tilde{h}_\alpha (x)= \left(h_\alpha\left[x_\alpha\right]\left(\tmua x\right)-\frac{3}{4}\right)U\left(\tmua x\right)^4\hskip.1cm,}\\
\,\\{\ds \tilde{f}_\alpha (x)= f_\alpha\left[x_\alpha\right]\left(\tmua x\right)\hskip.1cm,}\\
\,\\{\ds \tilde{a}_\alpha(x)= \left\vert \tmua^3 \tilde{U}_\alpha+U\left(\tmua x\right)^6 {\mathcal L}_\xi \tilde{W}_\alpha\right\vert_\xi^2+\tmua^6 U\left(\tmua x\right)^{12} b_\alpha\left[x_\alpha\right]\left(\tmua x\right)\hskip.1cm,}\\
\,\\{\ds \tilde{U}_\alpha(x)=U\left(\tmua x\right)^2 \left(\pi_{x_\alpha}\right)_\star U_\alpha\left(\tmua x\right)\hskip.1cm,}\\
\,\\{\ds \tilde{X}_\alpha (x) = U\left(\tmua x\right)^{-6}\left(\pi_{x_\alpha}\right)_\star X_\alpha \left(\tmua x\right)\hskip.1cm,}\\
\,\\{\ds \tilde{Y}_\alpha(x)= \left(\pi_{x_\alpha}\right)_\star Y_\alpha \left(\tmua x\right)\hskip.1cm.}\\
\end{array}
\end{equation}
Given $y\in {\mathbb R}^3$ and $R>0$, let us use the Green representation formula to write that 
$$\tilde{u}_\alpha(y)\ge \frac{1}{4\pi}\int_{B_y(2R)} \left(\frac{1}{\vert x- y\vert}-\frac{1}{2R}\right) \Delta_\xi \tilde{u}_\alpha\, dx$$
since $\tua \ge 0$. Using equation (\ref{eq-claim-lba1-12}) and the fact that $\tilde{f}_\alpha \ge 0$, we get that 
\begincal
\tilde{u}_\alpha(y)&\ge &\frac{1}{8\pi} \int_{B_y(R)}\frac{1}{\vert x- y\vert} \frac{\tilde{a}_\alpha(x)}{\tilde{u}_\alpha(x)^7}\, dx\\
&&\quad -\frac{\tmua^2}{4\pi} \int_{B_y(R)}\left(\frac{1}{\vert x- y\vert}-\frac{1}{2R}\right) \tilde{h}_\alpha(x)\tilde{u}_\alpha(x)\, dx\hskip.1cm.
\fincal
We deduce that 
$$\int_{B_y(R)} \left\vert x-y\right\vert^{-1} \tilde{a}_\alpha(x)\le 8\pi \left(\sup_{B_y\left(R\right)}\tilde{u}_\alpha\right)^8\left(1 + 2R^2\tmua^2 \sup_{B_y\left(R\right)}\left\vert \tilde{h}_\alpha\right\vert\right)\hskip.1cm.$$
Using (\ref{eq-claim-lba1-10}), we get that 
$$\limsup_{\alpha\to +\infty}\int_{B_y(R)} \left\vert x-y\right\vert^{-1} \tilde{a}_\alpha(x)\, dx \le D_1$$
for any $R>0$ and any $y\in {\mathbb R}^3$ where $D_1>0$ is some constant independent of $R$ and $y$. Thanks to (\ref{eqconv}) and (\ref{eq-claim-lba1-13}), this leads to the existence of some $D_2>0$ independent of $R>0$ and $y$ such that 
$$\limsup_{\alpha\to +\infty}\int_{B_y(R)\setminus B_y\left(\frac{R}{2}\right)} \left\vert x-y\right\vert^{-1}\left\vert {\mathcal L}_\xi \tilde{W}_\alpha\right\vert_\xi^2(x)\, dx \le D_2$$
for all $R>0$ and all $y\in {\mathbb R}^3$. We deduce easily that~: for any $y\in {\mathbb R}^3$ and for any $R>0$, there exists $\frac{R}{2}\le r_\alpha\le R$ such that 
\begin{equation}\label{eq-claim-lba1-14}
\int_{\partial B_y\left(r_\alpha\right)} \left\vert {\mathcal L}_\xi \tilde{W}_\alpha\right\vert_\xi^2(x)\, dx \le 2D_2\hskip.1cm.
\end{equation}
We use now the Green representation formula, see Proposition \ref{vecteurGreen}, to write that 
\begincal
\left\vert {\mathcal L}_\xi \tilde{W}_\alpha\right\vert (y)&\le & D_3 \int_{B_y\left(r_\alpha\right)} \left\vert x-y\right\vert^{-2} \left\vert \overrightarrow{\Delta_\xi} \tilde{W}_\alpha(x)\right\vert\, dx + D_3 r_\alpha^{-2}\int_{\partial B_y\left(r_\alpha\right)}\left\vert {\mathcal L}_\xi \tilde{W}_\alpha\right\vert \, d\sigma \\
&\le &D_3 \int_{B_y\left(r_\alpha\right)} \left\vert x-y\right\vert^{-2} \left\vert \overrightarrow{\Delta_\xi} \tilde{W}_\alpha(x)\right\vert\, dx + \frac{2D_3\sqrt{8\pi D_2}}{R}\\
\fincal
thanks to (\ref{eq-claim-lba1-14}) and to the fact that $2r_\alpha\ge R$. Using (\ref{eqconv}), (\ref{eq-claim-lba1-10}), (\ref{eq-claim-lba1-12}), (\ref{eq-claim-lba1-13}) and the fact that $\tmua\to 0$ as $\alpha\to +\infty$, we get that 
$$ \int_{B_y\left(r_\alpha\right)} \left\vert x-y\right\vert^{-2} \left\vert \overrightarrow{\Delta_\xi} \tilde{W}_\alpha(x)\right\vert\, dx \to 0$$
as $\alpha\to +\infty$. Thus we obtain that 
$$\limsup_{\alpha\to +\infty} \left\vert {\mathcal L}_\xi \tilde{W}_\alpha\right\vert (y)\le \frac{2D_3\sqrt{8\pi D_2}}{R}\hskip.1cm.$$
Since this holds for all $R>0$, we have proved that 
\begin{equation}\label{eq-claim-lba1-15}
 {\mathcal L}_\xi \tilde{W}_\alpha\to 0\hbox{ in }L^\infty_{loc}\left({\mathbb R}^3\right)\hbox{ as }\alpha\to +\infty\hskip.1cm.
 \end{equation}
Then we have thanks to (\ref{eqconv}) and (\ref{eq-claim-lba1-13}) and since $\tilde{\mu}_\alpha\to 0$ as $\alpha\to +\infty$ that 
$$\mu_\alpha^2 \tilde{h}_\alpha\to 0,\, \tilde{f}_\alpha\to f\left(x_0\right)\hbox{ and }\tilde{a}_\alpha\to 0\hbox{ in }L^\infty_{loc}\left({\mathbb R}^3\right)\hbox{ as }\alpha\to +\infty\hskip.1cm,$$
where $x_\alpha\to x_0$ as $\alpha\to +\infty$ (up to a subsequence). By (\ref{eq-claim-lba1-10}) and (\ref{eq-claim-lba1-15}), there holds ${\ds \tilde{u}_\alpha(0)=\sqrt{2}+o(1)}$. Thanks to (\ref{eq-claim-lba1-10}) and (\ref{eq-claim-lba1-15}), the Harnack inequality of Proposition \ref{prop-HI} shows that $\left(\tilde{u}_\alpha\right)$ is uniformly positive in any compact set of ${\mathbb R}^3$ so that we can pass to the limit in equation (\ref{eq-claim-lba1-12}) to get that, after passing to a subsequence, 
\begin{equation}\label{eq-claim-lba1-16}
\tilde{u}_\alpha\to \tilde{u}\hbox{ in } C^{1,\eta}_{loc}\left({\mathbb R}^3\right)\hbox{ as }\alpha\to +\infty\hskip.1cm,
\end{equation}
where 
$$\Delta_\xi \tilde{u} = f_0\left(x_0\right) \tilde{u}^5\hbox{ in }{\mathbb R}^3$$
and 
$$0< \tilde{u}\le \tilde{u}\left(\tilde{y}_0\right)= \sqrt{2}$$
thanks to (\ref{eq-claim-lba1-10}) and (\ref{eq-claim-lba1-15}). Then, by the classification result of Caffarelli-Gidas-Spruck \cite{CaGiSp}, we have that 
$$\tilde{u} = \left(\frac{1+\frac{4f\left(x_0\right)}{3}\vert x-y_0\vert^2}{2}\right)^{-\frac{1}{2}}\hskip.1cm. $$
Since $\nabla \tilde{u}(0)=0$ thanks to (\ref{eq-claim-lba1-11}), we deduce that $\tilde{y}_0=0$ and thus that $\frac{\mua}{\tmua}\to 1$ as $\alpha\to +\infty$. The claim follows.\hfill $\diamondsuit$

\medskip We set in the following 
\begin{equation}\label{eq-lba-3}
B_\alpha(x) = \sqrt{2}\mu_\alpha^{\frac{1}{2}}\left(\mua^2 + \frac{4f_\alpha\left(x_\alpha\right)}{3} \left\vert x\right\vert^2\right)^{-\frac{1}{2}}
\end{equation}
satisfying ${\ds \Delta_\xi B_\alpha = f_\alpha\left(x_\alpha\right) B_\alpha^5}$ in ${\mathbb R}^3$. We also set for $x\in B_0(1)$~:
\begin{equation}\label{eq-lba-4}
\begin{array}{l}
{\ds \va(x)=u_\alpha\left[x_\alpha\right](x) U(x)\hskip.1cm,}\\
\,\\
{\ds \hWa(x)= U(x)^{-4} W_\alpha\left[x_\alpha\right](x)\hskip.1cm,}\\
\,\\
{\ds \hha(x)=\left(h_\alpha\left[x_\alpha\right](x)-\frac{3}{4}\right)U(x)^4\hskip.1cm,}\\
\,\\
{\ds \hfa(x)= f_\alpha\left[x_\alpha\right](x)\hskip.1cm,}\\
\,\\
{\ds\haa(x)= \left\vert U^2 \left(\pi_{x_\alpha}\right)_\star U_\alpha+U^6{\mathcal L}_\xi \hWa\right\vert_\xi^2(x) + U(x)^{12} b_\alpha\left[x_\alpha\right](x)\hskip.1cm,}\\
\,\\
{\ds\hXa(x)= U(x)^{-6}\left(\pi_{x_\alpha}\right)_\star X_\alpha(x)\hskip.1cm,}\\
\,\\
{\ds\hYa(x)= \left(\pi_{x_\alpha}\right)_\star Y_\alpha(x)\hskip.1cm.}
\end{array}
\end{equation}
so that equation (\ref{equationEalpha}) becomes thanks to (\ref{eqlapfctstereo}) and (\ref{eqlapvectstereo}) the following~:
\begin{equation}\label{eq-lba-5}
\left\{\begin{array}{l}
{\ds \Delta_\xi v_\alpha + \hha \va=\hfa \va^5+\haa \va^{-7}}\\
\,\\
{\ds \left(\overrightarrow{\Delta_\xi}\hWa\right)_i = -\frac{6x^j}{1+\vert x\vert^2}\left({\mathcal L}_\xi \hWa\right)_{ij} + \va^6\left(\hXa\right)_i+\left(\hYa\right)_i}
\end{array}\right.
\end{equation}
Note that claim \ref{claim-lba1} tells us that 
\begin{equation}\label{eq-lba-6}
\mua^{\frac{1}{2}} \va\left(\mu_\alpha x\right)\to U\left(\sqrt{\frac{4f\left(x_0\right)}{3}} x\right)\hbox{ in }C^1_{loc}\left({\mathbb R}^3\right)\hbox{ as }\alpha\to +\infty
\end{equation}
and that 
\begin{equation}\label{eq-lba7}
\mu_\alpha^3 \left\vert {\mathcal L}_\xi \hWa\right\vert_\xi\left(\mu_\alpha x\right)\to 0\hbox{ in }L^\infty_{loc}\left({\mathbb R}^3\right)\hbox{ as }\alpha\to +\infty\hskip.1cm.
\end{equation}
We set also 
\begin{equation}\label{eq-lba-8}
8\hpa=\tan\left(4\rho_\alpha\right)
\end{equation}
so that (\ref{eq-lba-1}) becomes 
\begin{equation}\label{eq-lba-9}
\vert x\vert^{3}\left(\va(x)^6 +\left\vert {\mathcal L}_\xi \hWa\right\vert_\xi(x)\right) \le C_2
\end{equation}
in $B_0\left(8\hpa\right)$ for some $C_2>0$ independent of $\alpha$ and $x$. 

\medskip Fix $\eps>0$. We define now $r_\alpha>0$ by 
\beq\label{eq-lba-10}
r_\alpha =\sup\left\{0\le r\le \hpa\hbox{ s.t. } \va< \left(1+\eps\right)B_\alpha\hbox{ in }B_{0}\left(r\right)\right\}
\eeq
where $\hpa$ is as in (\ref{eq-lba-8}). Thanks to (\ref{eq-lba-6}), it is clear that 
\beq\label{eq-lba-11}
\frac{r_\alpha}{\mu_\alpha}\to +\infty \hbox{ as }\alpha\to +\infty\hskip.1cm.
\eeq

\begin{claim}\label{claim-lba3}
There exists $C_3>0$ such that
$$v_\alpha\le C_3 B_\alpha$$
in $B_0\left(7r_\alpha\right)$. 
\end{claim}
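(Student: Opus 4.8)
The plan is to run Green's representation formula for the scalar equation in \eqref{eq-lba-5}, exactly in the spirit of the computations carried out in the proofs of Claims \ref{claim1}, \ref{claim3} and \ref{claim-lba1}, and to split the resulting integral according to the bubble core, an intermediate annulus, and a far/boundary part. First, one reduces to bounding $\va$ on $B_0(7r_\alpha)\setminus B_0(r_\alpha)$, since on $B_0(r_\alpha)$ the bound $\va\le(1+\eps)B_\alpha$ is built into the definition \eqref{eq-lba-10} of $r_\alpha$; note also that $8r_\alpha\le 8\hpa$, so that \eqref{eq-lba-9} provides the a priori bounds $\va(y)\le C_2^{1/6}|y|^{-1/2}$ and $|{\mathcal L}_\xi\hWa|_\xi(y)\le C_2|y|^{-3}$ on $B_0(8r_\alpha)$.

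Fix $x\in B_0(7r_\alpha)$. Letting $G$ be the Green function of $\Delta_\xi$ on a Euclidean ball containing $B_0(8r_\alpha)$ --- which is positive and dominated by $(4\pi|\cdot|)^{-1}$ --- and moving the lower order term $\hha\va$ to the right-hand side of \eqref{eq-lba-5}, one writes
\[\va(x)=\int G(x,y)\left(\hfa\va^5+\haa\va^{-7}-\hha\va\right)(y)\,dy+(\hbox{Poisson boundary term}),\]
and decomposes the leading integral as $\int_{B_0(r_\alpha)}+\int_{\{r_\alpha\le|y|\}}$. For the core part one uses $\va\le(1+\eps)B_\alpha$ on $\overline{B_0(r_\alpha)}$ together with the fact, recorded after \eqref{eq-lba-3}, that $\Delta_\xi B_\alpha=f_\alpha(x_\alpha)B_\alpha^5$ in $\R^3$, so that $B_\alpha(x)=f_\alpha(x_\alpha)\int_{\R^3}(4\pi|x-y|)^{-1}B_\alpha^5(y)\,dy$ by decay of $B_\alpha$ and of its Newtonian potential. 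This yields
\[\int_{B_0(r_\alpha)}G(x,y)\,\hfa(y)\va(y)^5\,dy\le\frac{(1+\eps)^5\sup_{B_0(8r_\alpha)}\hfa}{f_\alpha(x_\alpha)}\,B_\alpha(x)\le C\,B_\alpha(x),\]
with $C$ independent of $\alpha$ by \eqref{eqconv} and $f_0>0$.

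It then remains to control the contribution of the annulus $\{r_\alpha\le|y|\le 8r_\alpha\}$ to $\int G\hfa\va^5$, the term $\int G\haa\va^{-7}$, the term $\int G|\hha|\va$, and the Poisson boundary term, and to show that each of them is $O(B_\alpha(x))$ on $B_0(7r_\alpha)$. This is where the difficulty lies: on the annulus $B_\alpha$ is only of size $\mu_\alpha^{1/2}/r_\alpha$, whereas the a priori bound $\va\lesssim|y|^{-1/2}$ coming from \eqref{eq-lba-9} would merely bound these terms by $O(r_\alpha^{-1/2})$, which is far too large. To close the estimate one has to exploit the exact profile of $B_\alpha$ in \eqref{eq-lba-3} together with the decay already obtained in Claim \ref{claim-lba1} (rescaled at the scale $\mu_\alpha$, both ${\mathcal L}_\xi\hWa$ and $\haa$ tend to $0$, cf.\ also \eqref{eq-lba7}), and --- crucially --- the fact that $7r_\alpha$ is only a \emph{fixed} multiple of $r_\alpha$: one introduces the largest radius $R_\alpha\in[r_\alpha,7r_\alpha]$ up to which $\va\le\Lambda B_\alpha$ holds for a suitably large fixed $\Lambda$, and, assuming $R_\alpha<7r_\alpha$, one reapplies the representation formula at the point of $\partial B_0(R_\alpha)$ where equality is attained and derives a contradiction for $\Lambda$ large enough. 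I expect the sharp handling of the term $\int G\haa\va^{-7}$ away from the core to be the main obstacle: it requires pointwise control of ${\mathcal L}_\xi\hWa$ off $0$, obtained from the vector equation in \eqref{eq-lba-5} through a second Green's representation for $\hWa$ as in Proposition \ref{vecteurGreen}, together with a Harnack inequality at the scale $r_\alpha$ via Proposition \ref{prop-HI}.
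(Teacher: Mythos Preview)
Your Green-representation/bootstrap strategy is quite different from the paper's argument, and the part you flag as ``the main obstacle'' is a genuine gap that your proposed fixes do not close.

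The paper's proof is a direct propagation by Harnack's inequality. One rescales by setting $\tva(x)=(r_\alpha/2)^{1/2}v_\alpha(r_\alpha x/2)$; by \eqref{eq-lba-9} both $\tva$ and the rescaled coefficient $(r_\alpha/2)^6\haa(r_\alpha\,\cdot/2)$ are uniformly bounded on $B_0(16)\setminus B_0(1)$, while the definition \eqref{eq-lba-10} of $r_\alpha$ gives $\tva\le D_2\,\mu_\alpha^{1/2}r_\alpha^{-1/2}$ on $B_0(2)\setminus B_0(1)$. One then applies Proposition~\ref{prop-HI} on overlapping unit balls in successive annuli $B_0(k+1)\setminus B_0(k)$, $k=2,\dots,13$: since $\sup\le C\inf$ and each such ball meets the previous annulus where the bound is already known, the estimate $\tva\le D\,\mu_\alpha^{1/2}r_\alpha^{-1/2}$ propagates to $B_0(14)\setminus B_0(2)$, hence $v_\alpha\le D_5 B_\alpha$ on $B_0(7r_\alpha)\setminus B_0(r_\alpha)$. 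No Green representation, no lower bound on $v_\alpha$, no analysis of $\haa v_\alpha^{-7}$ is needed: the negative-power term is absorbed \emph{inside} the Harnack constant of Proposition~\ref{prop-HI}, whose dependence is only on the $L^\infty$ norms of $a$, $f$, $h$ and of $u$.

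In your scheme, by contrast, the term $\int G\,\haa v_\alpha^{-7}$ cannot be controlled with the ingredients available at this point. The only global lower bound you have is $v_\alpha\ge c>0$ from Claim~\ref{claim1}, and combining this with \eqref{eq-lba-9} gives $\haa v_\alpha^{-7}\lesssim |y|^{-6}$ on the annulus, whose Newtonian potential at $|x|\sim r_\alpha$ is of order $r_\alpha^{-1}\mu_\alpha^{-3}$, hopelessly larger than $B_\alpha(x)\sim\mu_\alpha^{1/2}/r_\alpha$. Improving $|{\mathcal L}_\xi\hWa|$ via the vector Green formula does not help enough: the boundary term on $\partial B_0(s r_\alpha)$ already contributes $O(r_\alpha^{-3})$ by \eqref{eq-lba-9}, and even the best interior bound $|{\mathcal L}_\xi\hWa|\lesssim|y|^{-2}$ still leaves $\int |x-y|^{-1}|y|^{-4}\,dy\sim r_\alpha^{-1}\mu_\alpha^{-1}$, off by $\mu_\alpha^{-3/2}$. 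What you really need is $v_\alpha\gtrsim B_\alpha$, i.e.\ Claim~\ref{claim-lba2}, but that claim is proved \emph{after} the present one and uses \eqref{eq-lba-11bis}, itself a consequence of Claim~\ref{claim-lba3}; so you cannot invoke it here without circularity. Your final suggestion, ``a Harnack inequality at the scale $r_\alpha$ via Proposition~\ref{prop-HI}'', is exactly the paper's argument: once you apply that Harnack inequality on annuli, the upper bound follows immediately and the entire Green-representation machinery becomes superfluous.
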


\medskip {\bf Proof.} It is a direct consequence of Harnack's inequality, see Proposition \ref{prop-HI}. Indeed, let us set 
$$\tva= \left(\frac{r_\alpha}{2}\right)^{\frac{1}{2}}v_\alpha\left(\frac{1}{2}r_\alpha x\right)\hskip.1cm.$$
Then $\tva$ satisfies in $B_0\left(16\right)$ the equation 
$$\Delta_{\xi}\tva + \frac{1}{4}r_\alpha^2 \hha\left(\frac{1}{2}r_\alpha x\right)\tva  = \hfa\left(\frac{1}{2}r_\alpha x\right) \tva^5 + \left(\frac{r_\alpha}{2}\right)^{6}\frac{\haa\left(\frac{1}{2}r_\alpha x\right)}{\tva^{7}}\hskip.1cm.$$
Thanks to (\ref{eqconv}) and (\ref{eq-lba-9}), we know that there exists $D_1>0$ such that 
$$\vert x\vert^{\frac{1}{2}} \tva(x) + \vert x\vert^6 r_\alpha^6 \haa\left(\frac{1}{2}r_\alpha x\right)\le D_1$$
in $B_0(16)$. In particular, this implies that 
$$\tva \le D_1 \hbox{ in }B_0\left(16\right)\setminus B_0(1)\hskip.1cm.$$
We know moreover that 
$$\tva \le D_2 \mu_\alpha^{\frac{1}{2}} r_\alpha^{-\frac{1}{2}}\hbox{ in }B_0(2)\setminus B_0(1)$$
for some $D_2>0$ thanks to the definition of $r_\alpha$. Applying the Harnack inequality of Proposition \ref{prop-HI} on $B_{y}\left(1\right)$ for any $y\in B_0(3)\setminus B_0(2)$, we get that 
$$\tva\le D_3\mu_\alpha^{\frac{1}{2}} r_\alpha^{-\frac{1}{2}}\hbox{ in }B_0(3)\setminus B_0(2)$$
for some $D_3>0$ independent of $\alpha$. We can then repeat the argument on the annuli  $B_0(4)\setminus B_0(3)$, \dots, to finally get the existence of some $D_4>0$ independent of $\alpha$ such that 
$$\tva \le D_4 \mu_\alpha^{\frac{1}{2}} r_\alpha^{-\frac{1}{2}}\hbox{ in }B_0(14)\setminus B_0(2)\hskip.1cm.$$
This clearly leads to 
$$\va \le D_5 B_\alpha\hbox{ in }B_{0}\left(7r_\alpha\right)\setminus B_{0}\left(r_\alpha\right)$$
for some $D_5>0$ independent of $\alpha$. Since, by the definition of $r_\alpha$, we also have that 
$$v_\alpha\le \left(1+\eps\right) B_\alpha\hbox{ in }B_{0}\left(r_\alpha\right)\hskip.1cm,$$
this ends the proof of the claim.\hfill $\diamondsuit$

\medskip Note that, as a consequence of Claim \ref{claim1} and Claim \ref{claim-lba3}, we know that
\begin{equation}\label{eq-lba-11bis}
r_\alpha = O\left(\sqrt{\mua}\right)\hskip.1cm.
\end{equation}

\begin{claim}\label{claim-lba2}
There exists $\eta_\alpha\to 0$ such that 
$$\va\left(y_\alpha\right) \ge \left(1-\eta_\alpha\right) B_\alpha\left(y_\alpha\right)$$
for all sequences $\left(y_\alpha\right)$ of points in $B_0\left(8r_\alpha\right)$.
\end{claim}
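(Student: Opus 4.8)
\medskip {\it Proposed approach.} First I would record two elementary reductions. Since $\va(0)=\sqrt2\,\ma^{-1/2}=B_\alpha(0)$ (by Claim \ref{claim-lba1} and \eqref{eqdefconffactor}), one has $\inf_{\overline{B_0(8r_\alpha)}}\va/B_\alpha\le 1$, so it is enough to show that $\inf_{\overline{B_0(8r_\alpha)}}\va/B_\alpha\to 1$, i.e. that $\liminf_\alpha\va(y_\alpha)/B_\alpha(y_\alpha)\ge 1$ whenever $y_\alpha$ realizes the infimum; one then sets $1-\eta_\alpha=\inf_{\overline{B_0(8r_\alpha)}}\va/B_\alpha$. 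By \eqref{eq-lba-11bis}, $r_\alpha=O(\sqrt{\ma})$, hence $|y_\alpha|\to0$ and $B_\alpha(y_\alpha)$ stays bounded away from $0$. If $|y_\alpha|/\ma$ stays bounded along a subsequence, writing $y_\alpha=\ma z_\alpha$ the rescaled convergence \eqref{eq-lba-6}, the identity $\ma^{1/2}B_\alpha(\ma z)=U\!\left(\sqrt{4f_\alpha(\xa)/3}\,z\right)$ and $f_\alpha(\xa)\to f_0(x_0)$ force $\va(y_\alpha)/B_\alpha(y_\alpha)\to 1$ directly. So I may assume $|y_\alpha|/\ma\to+\infty$, which is the real case.

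\medskip The plan for the real case is to bound $\va(y_\alpha)$ from below by the contribution to $u_\alpha$ of the region where the bubble lives, read off a Green representation formula on ${\mathbb S}^3$. Let $G_\alpha$ be the Green function of $\Delta_h+h_\alpha$ on ${\mathbb S}^3$: for $\alpha$ large it is positive, coercivity of $\Delta_h+h_0$ being stable under $C^0$-perturbations, and its regular part stays bounded uniformly in $\alpha$, so that near the diagonal $G_\alpha(p,q)=\frac{1}{4\pi\, d_h(p,q)}+O(1)$ with $O(1)$ uniform in $\alpha$ (see \cite{RobWeb}). Setting $p_\alpha=\pi_{\xa}^{-1}(y_\alpha)$ one has $\va(y_\alpha)=u_\alpha(p_\alpha)U(y_\alpha)$ with $U(y_\alpha)\to\sqrt2$, and $d_h(p_\alpha,\xa)=2\arctan|y_\alpha|=2|y_\alpha|(1+o(1))$ since $\pi_{\xa}^{-1}(0)=\xa$ and the round metric reads $U^4\xi$ in the chart. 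Fixing $R>0$ and letting $\Omega_\alpha=\pi_{\xa}^{-1}\!\left(B_0(R\ma)\right)$ — a tiny ball around $\xa$ — the representation formula together with $f_\alpha u_\alpha^5>0$, $a_\alpha u_\alpha^{-7}\ge0$ and $G_\alpha>0$ yields
$$u_\alpha(p_\alpha)\ \ge\ \int_{\Omega_\alpha}G_\alpha(p_\alpha,q)\,f_\alpha(q)\,u_\alpha(q)^5\,dv_h(q).$$

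\medskip Then I would estimate the two factors separately. For $q\in\Omega_\alpha$ one has $d_h(\xa,q)=O(R\ma)=o(|y_\alpha|)$, so $d_h(p_\alpha,q)=2|y_\alpha|(1+o(1))$, and as $|y_\alpha|\to0$ the $O(1)$ in the expansion of $G_\alpha$ is negligible against $d_h(p_\alpha,q)^{-1}$, whence $G_\alpha(p_\alpha,q)\ge\frac{1+o(1)}{8\pi|y_\alpha|}$ uniformly on $\Omega_\alpha$. Passing to the chart ($dv_h=U^6\,dx$, $u_\alpha[\xa]=\va/U$) and rescaling by $\ma$, \eqref{eq-lba-6} together with $f_\alpha[\xa]\to f_0(x_0)$ and $U\to\sqrt2$ on $B_0(R\ma)$ gives
$$\int_{\Omega_\alpha}f_\alpha u_\alpha^5\,dv_h=(1+o(1))(1-\omega_R)\,\sqrt2\,f_0(x_0)\,I(x_0)\,\ma^{1/2},\qquad I(x_0):=\int_{{\mathbb R}^3}U\!\left(\sqrt{\tfrac{4f_0(x_0)}{3}}\,z\right)^{5}dz,$$
with $\omega_R\to0$ as $R\to+\infty$. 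Multiplying and using $\va(y_\alpha)=(1+o(1))\sqrt2\,u_\alpha(p_\alpha)$,
$$\va(y_\alpha)\ \ge\ (1+o(1))(1-\omega_R)\,\frac{f_0(x_0)I(x_0)}{4\pi}\,\frac{\ma^{1/2}}{|y_\alpha|}.$$
Finally, since $B_\alpha$ solves $\Delta_\xi B_\alpha=f_\alpha(\xa)B_\alpha^{5}$ on ${\mathbb R}^3$ and vanishes at infinity it is the Newtonian potential of $f_\alpha(\xa)B_\alpha^5$, of total mass $(1+o(1))f_0(x_0)I(x_0)\ma^{1/2}$ and concentrated at scale $\ma\ll|y_\alpha|$, so $B_\alpha(y_\alpha)=(1+o(1))\frac{f_0(x_0)I(x_0)}{4\pi}\frac{\ma^{1/2}}{|y_\alpha|}$ — equivalently $B_\alpha(y_\alpha)\sim\sqrt6\,\ma^{1/2}/(2\sqrt{f_0(x_0)}\,|y_\alpha|)$, the same number. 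Hence $\va(y_\alpha)/B_\alpha(y_\alpha)\ge(1+o(1))(1-\omega_R)$; letting $\alpha\to+\infty$ then $R\to+\infty$ gives the bound $\ge1$ and closes the argument.

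\medskip The crux is the uniform-in-$\alpha$ control of $G_\alpha$ near the diagonal: it is because its regular part stays bounded while $|y_\alpha|$ is $o(1)$ that this regular part is negligible against the $d_h^{-1}$ singularity, and thus that the \emph{global} representation on ${\mathbb S}^3$ produces the sharp constant. A Dirichlet representation on a Euclidean ball $B_0(s_\alpha)$ would fail here, its regular part being $\sim s_\alpha^{-1}$, hence comparable to $|y_\alpha|^{-1}$ for $y_\alpha$ near $\partial B_0(s_\alpha)$, which would cost a multiplicative constant. Everything else — the change of variables, the bookkeeping of the conformal factors $U$ so that the net gain is exactly $(1+o(1))$, and the identity $f_0(x_0)I(x_0)/(4\pi)=\sqrt6/(2\sqrt{f_0(x_0)})$ — is routine.
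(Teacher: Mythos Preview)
Your proof is correct and follows essentially the same route as the paper: both use the Green function $G_\alpha$ of $\Delta_h+h_\alpha$ on the whole sphere, drop the nonnegative terms $a_\alpha u_\alpha^{-7}$ to get $u_\alpha(p_\alpha)\ge\int G_\alpha(p_\alpha,\cdot)f_\alpha u_\alpha^5\,dv_h$, and exploit that the regular part of $G_\alpha$ is uniformly bounded while $|y_\alpha|\to 0$ (from $r_\alpha=O(\sqrt{\mu_\alpha})$) to recover the sharp constant. The only cosmetic difference is that the paper keeps the full kernel $|y_\alpha-\mu_\alpha x|^{-1}$ in the rescaled integral and invokes Fatou's lemma once, which handles both regimes $|y_\alpha|/\mu_\alpha$ bounded and $|y_\alpha|/\mu_\alpha\to+\infty$ simultaneously, whereas you split these two cases and, in the second, approximate the kernel by $|y_\alpha|^{-1}$ and truncate to $B_0(R\mu_\alpha)$ before sending $R\to+\infty$; your remark that a local Dirichlet representation would lose the constant is exactly the point.
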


\medskip {\bf Proof.} We let $G_\alpha$ be the Green function of $\Delta_h +h_\alpha$ on ${\mathbb S}^3$. Let $y_\alpha\in B_0\left(8r_\alpha\right)$. Since $\Delta_h u_\alpha +h_\alpha u_\alpha\ge 0$ thanks to (\ref{equationEalpha}) and the fact that $f_\alpha>0$ and $b_\alpha\ge 0$, we can write with the Green representation formula that 
$$v_\alpha\left(y_\alpha\right) \ge \mu_\alpha^{\frac{1}{2}} U\left(y_\alpha\right)\int_{B_0\left(8\frac{r_\alpha}{\mu_\alpha}\right)} \hfa\left(\mu_\alpha x \right) \left(\mu_\alpha^\frac{1}{2}v_\alpha\left(\mu_\alpha x\right)\right)^5 H_\alpha\left(y_\alpha,\mu_\alpha x\right)\, dx\hskip.1cm,$$
where 
$$H_\alpha\left(y_\alpha,y\right) = U\left(y\right) G_\alpha\left(\pi_{x_\alpha}^{-1}\left(y_\alpha\right),\pi_{x_\alpha}^{-1}\left(y\right)\right)\hskip.1cm.$$
We know thanks to (\ref{eqconv}) and to (\ref{eq-lba-11bis}) that 
$$\left\vert \left\vert y-\mu_\alpha x\right\vert H_\alpha\left(y_\alpha,\mu_\alpha x\right) - \frac{\sqrt{2}}{8\pi}\right\vert\to 0$$
uniformly for $x\in B_0\left(8\frac{r_\alpha}{\mu_\alpha}\right)$. Thus we have that 
\begincal
\frac{v_\alpha\left(y_\alpha\right)}{B_\alpha\left(y_\alpha\right)} &\ge &\bigl(1+o(1)\bigr) \frac{1}{8\pi}U\left(y_\alpha\right)\left(\mu_\alpha^2 +\frac{4f_\alpha\left(x_\alpha\right)}{3} \left\vert y_\alpha\right\vert^2\right)^{\frac{1}{2}}\\
&&\times\int_{B_0\left(8\frac{r_\alpha}{\mu_\alpha}\right)} \hfa\left(\mu_\alpha x \right) \left(\mu_\alpha^\frac{1}{2}v_\alpha\left(\mu_\alpha x\right)\right)^5 \left
\vert y_\alpha-\mu_\alpha x\right\vert^{-1}\, dx\hskip.1cm.
\fincal
Fatou's lemma together with simple computations lead then to the desired result thanks to Claim \ref{claim-lba1}.  \hfill $\diamondsuit$

\begin{claim}\label{claim-lba3bis}
There exists $C_4>0$ such that, for any $x\in B_0\left(7 r_\alpha\right)$, 
$$\int_{B_0\left(7r_\alpha\right)} \left\vert x-y\right\vert^{-1} \left\vert {\mathcal L}_\xi \hWa\right\vert_\xi^2(y) B_\alpha(y)^{-7}\, dy \le C_4 B_\alpha(x)\hskip.1cm.$$
\end{claim}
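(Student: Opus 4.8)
The plan is to bound $\bigl|\mathcal{L}_\xi\hWa\bigr|$ pointwise on $B_0\left(7r_\alpha\right)$ through the Green representation formula for the conformal vector Laplacian (Proposition \ref{vecteurGreen}), and then to feed this bound into the integral and reduce to elementary estimates of integrals of powers of $B_\alpha$ against $\left|x-y\right|^{-1}$.

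For the pointwise step, fix $y\in B_0\left(7r_\alpha\right)$ and apply Proposition \ref{vecteurGreen} on a ball $B_y(s)$ with $\frac{1}{2}r_\alpha\le s\le r_\alpha$, so that $B_y(s)\subset B_0\left(8r_\alpha\right)$: this gives
$$\bigl|\mathcal{L}_\xi\hWa\bigr|(y)\le C\int_{B_y(s)}\frac{\bigl|\overrightarrow{\Delta_\xi}\hWa\bigr|(z)}{\left|y-z\right|^{2}}\,dz+\frac{C}{s^{2}}\int_{\partial B_y(s)}\bigl|\mathcal{L}_\xi\hWa\bigr|\,d\sigma .$$
By the vector equation (\ref{eq-lba-5}) we have $\bigl|\overrightarrow{\Delta_\xi}\hWa\bigr|(z)\le\frac{6\left|z\right|}{1+\left|z\right|^{2}}\bigl|\mathcal{L}_\xi\hWa\bigr|(z)+\va^{6}\bigl|\hXa\bigr|(z)+\bigl|\hYa\bigr|(z)$; by Claim \ref{claim-lba3} and the convergences (\ref{eqconv}) the last two terms are $\le C\bigl(B_\alpha^{6}(z)+1\bigr)$ on $B_0\left(8r_\alpha\right)$, and a rescaling $z=\mu_\alpha w$ shows that $\int_{\mathbb R^{3}}\left|y-z\right|^{-2}B_\alpha^{6}(z)\,dz\le C\bigl(\mu_\alpha+\left|y\right|\bigr)^{-2}$. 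The self-referential term $\frac{6\left|z\right|}{1+\left|z\right|^{2}}\bigl|\mathcal{L}_\xi\hWa\bigr|(z)$ is controlled using $\left|z\right|\le 8r_\alpha\to 0$ together with the rough bound (\ref{eq-lba-9}) and then absorbed into the left-hand side. The boundary term is made negligible by a good choice of radius, as in the proof of Claim \ref{claim3}: from $\int_{B_y(r_\alpha)}\left|y-z\right|^{-1}\bigl|\overrightarrow{\Delta_\xi}\hWa\bigr|(z)\,dz\le C$ one obtains $\int_{B_y(r_\alpha)\setminus B_y(r_\alpha/2)}\left|y-z\right|^{-1}\bigl|\mathcal{L}_\xi\hWa\bigr|^{2}(z)\,dz\le C$, hence a radius $s_\alpha\in\bigl(\frac{1}{2}r_\alpha,r_\alpha\bigr)$ on which $\int_{\partial B_y(s_\alpha)}\bigl|\mathcal{L}_\xi\hWa\bigr|^{2}\,d\sigma\le Cr_\alpha^{-1}$, and Cauchy--Schwarz then controls $s_\alpha^{-2}\int_{\partial B_y(s_\alpha)}\bigl|\mathcal{L}_\xi\hWa\bigr|$. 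Putting everything together yields a pointwise decay estimate $\bigl|\mathcal{L}_\xi\hWa\bigr|(y)\le\Lambda_\alpha(y)$ on $B_0\left(7r_\alpha\right)$, expressed through $\mu_\alpha$, $\left|y\right|$ and $r_\alpha$.

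For the second step, I would insert this bound into
$$\int_{B_0\left(7r_\alpha\right)}\left|x-y\right|^{-1}\bigl|\mathcal{L}_\xi\hWa\bigr|^{2}(y)\,B_\alpha(y)^{-7}\,dy\le\int_{B_0\left(7r_\alpha\right)}\left|x-y\right|^{-1}\Lambda_\alpha(y)\,B_\alpha(y)^{-7}\,dy ,$$
and estimate the right-hand side by splitting $B_0\left(7r_\alpha\right)$ into the dyadic-type regions $\left\{\left|y\right|\le\mu_\alpha\right\}$, $\left\{\mu_\alpha\le\left|y\right|\le\left|x\right|\right\}$, $\left\{\left|x\right|\le\left|y\right|\le r_\alpha\right\}$ and $\left\{r_\alpha\le\left|y\right|\le 7r_\alpha\right\}$. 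On each region one uses the explicit form of $B_\alpha$ from (\ref{eq-lba-3}), the bound $r_\alpha=O\bigl(\sqrt{\mu_\alpha}\bigr)$ of (\ref{eq-lba-11bis}), and the elementary lower bound $B_\alpha(x)\ge c\,\mu_\alpha^{1/2}r_\alpha^{-1}$ valid on $B_0\left(7r_\alpha\right)$; since $B_\alpha$ is a decreasing function of $\left|y\right|$ and the kernel $\left|x-y\right|^{-1}$ is integrable in dimension $3$, each contribution reduces to a one-variable integral and is shown to be at most $C_4\,B_\alpha(x)$.

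The main obstacle is the pointwise step: one needs to extract from the Green formula a decay rate for $\bigl|\mathcal{L}_\xi\hWa\bigr|$ that is sharp enough --- keeping track of the precise concentration of the source $\va^{6}\hXa\sim B_\alpha^{6}$ at scale $\mu_\alpha$ --- so that, once weighted by $B_\alpha^{-7}$ and integrated against $\left|x-y\right|^{-1}$, the output is exactly of order $B_\alpha(x)$; this is the place where the bound $r_\alpha=O\bigl(\sqrt{\mu_\alpha}\bigr)$ is essential and where the scaling must be tracked with care. Handling the boundary term in the Green representation and absorbing the self-referential term $\frac{6\left|z\right|}{1+\left|z\right|^{2}}\bigl|\mathcal{L}_\xi\hWa\bigr|$ are secondary but still technical points.
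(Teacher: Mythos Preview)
Your approach is essentially backwards relative to the paper's, and it has a genuine gap.

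The paper's argument is much more direct and never goes through a pointwise estimate on $\bigl|\mathcal{L}_\xi\hWa\bigr|$. It observes that the quantity $\bigl|\mathcal{L}_\xi\hWa\bigr|^{2}$ already sits inside $\haa$, which in turn appears in the \emph{scalar} equation for $\va$. Writing the Green representation for $\Delta_\xi+\hha$ on $B_0\left(8r_\alpha\right)$ and using $\Delta_\xi\va+\hha\va\ge 0$, $\va>0$, one gets
\[
\va(x)\ge D_1\int_{B_0(7r_\alpha)}\frac{\haa(y)}{|x-y|\,\va(y)^{7}}\,dy .
\]
Since $\haa\ge D_2^{-1}\bigl|\mathcal{L}_\xi\hWa\bigr|^{2}-D_2$ and $\va\le C_3 B_\alpha$ (Claim~\ref{claim-lba3}), this immediately yields the desired integral bound up to the error $\int_{B_0(7r_\alpha)}|x-y|^{-1}B_\alpha(y)^{-7}\,dy$, which is $O\bigl(r_\alpha^{9}\mu_\alpha^{-7/2}\bigr)=O\bigl(B_\alpha(x)\bigr)$ by $r_\alpha=O\bigl(\sqrt{\mu_\alpha}\bigr)$. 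That is the whole proof.

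Your route---first prove a pointwise bound $\bigl|\mathcal{L}_\xi\hWa\bigr|(y)\le\Lambda_\alpha(y)$, then integrate---runs into a circularity. In the paper, the pointwise bound on $\bigl|\mathcal{L}_\xi\hWa\bigr|$ is Claim~\ref{claim-lba4}, and its proof \emph{uses} Claim~\ref{claim-lba3bis} precisely to control the boundary term in the vector Green representation. The only pointwise information available beforehand is the crude $(\ref{eq-lba-9})$, i.e.\ $\bigl|\mathcal{L}_\xi\hWa\bigr|(y)\le C\bigl(\mu_\alpha^{2}+|y|^{2}\bigr)^{-3/2}$, and plugging this into your integral gives a bound of order $\mu_\alpha^{-7/2}r_\alpha^{3}$, which is \emph{not} $O\bigl(B_\alpha(x)\bigr)$ since $r_\alpha/\mu_\alpha\to\infty$.

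There is also a concrete error in your boundary step: you write that ``from $\int_{B_y(r_\alpha)}|y-z|^{-1}\bigl|\overrightarrow{\Delta_\xi}\hWa\bigr|\,dz\le C$ one obtains $\int_{B_y(r_\alpha)\setminus B_y(r_\alpha/2)}|y-z|^{-1}\bigl|\mathcal{L}_\xi\hWa\bigr|^{2}\,dz\le C$''. This implication is false as stated; $\bigl|\overrightarrow{\Delta_\xi}\hWa\bigr|$ and $\bigl|\mathcal{L}_\xi\hWa\bigr|^{2}$ are unrelated quantities. In the analogous step of Claim~\ref{claim3} the bound on $\int|x-y|^{-1}\bigl|\mathcal{L}_\xi\tilde W_\alpha\bigr|^{2}$ comes from the \emph{scalar} Green formula applied to $\tilde u_\alpha$, not from any estimate on $\overrightarrow{\Delta_\xi}\tilde W_\alpha$. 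Moreover, your premise $\int_{B_y(r_\alpha)}|y-z|^{-1}\bigl|\overrightarrow{\Delta_\xi}\hWa\bigr|\,dz\le C$ is itself wrong near $y=0$: the source term $\va^{6}\hXa\sim B_\alpha^{6}$ gives $\int_{B_0(r_\alpha)}|z|^{-1}B_\alpha^{6}\,dz\sim\mu_\alpha^{-1}\to\infty$.

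In short: use the scalar equation. The term you want to bound is already built into $\haa\,\va^{-7}$, and the scalar Green representation delivers the claim directly.
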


\medskip {\bf Proof. } We use the Green representation formula for $\Delta_\xi + \hha$ in $B_0\left(8r_\alpha\right)$ to write that there exists $D_1>0$, see \cite{RobWeb}, such that  
$$v_\alpha(x) \ge D_1\int_{B_0\left(7r_\alpha\right)} \frac{1}{\vert x-y\vert}\haa(y)v_\alpha(y)^{-7}\, dy$$
for all $x\in B_0\left(7r_\alpha\right)$ thanks to the fact that $v_\alpha\ge 0$. Here we also used equation (\ref{eq-lba-5}) and the fact that $\Delta_\xi v_\alpha+\hha v_\alpha\ge 0$. We write now that there exists $D_2>1$ such that 
$$\haa \ge D_2^{-1} \left\vert {\mathcal L}_\xi \hWa\right\vert^2_\xi - D_2$$
in $B_0\left(7r_\alpha\right)$. Using claim \ref{claim-lba3}, we get that 
\begincal
C_3 B_\alpha(x) &\ge& \frac{D_1}{D_2 C_3^7}\int_{B_0\left(7r_\alpha\right)} \frac{1}{\vert x-y\vert}\left\vert {\mathcal L}_\xi\hWa\right\vert_\xi^2(y) B_\alpha(y)^{-7}\, dy\\
&& - \frac{D_2 D_1}{C_3^7} \int_{B_0\left(7r_\alpha\right)} \frac{1}{\vert x-y\vert}B_\alpha(y)^{-7}\, dy\hskip.1cm.
\fincal
It remains to remark that there exists $D_3>0$ such that 
$$\int_{B_0\left(7r_\alpha\right)} \frac{1}{\vert x-y\vert}B_\alpha(y)^{-7}\, dy \le D_3 r_\alpha^9 \mu_\alpha^{-\frac{7}{2}}$$
and to note thanks to (\ref{eq-lba-11bis}) that 
$$r_\alpha^9 \mu_\alpha^{-\frac{7}{2}}\le D_4 B_\alpha(x)$$
for some $D_4>0$ for all $x\in B_0\left(7r_\alpha\right)$ to conclude. \hfill $\diamondsuit$

\medskip Let us define the $1$-form $V_\alpha$ in ${\mathbb R}^3$ by 
\begin{equation}\label{eq-lba-12}
V_\alpha(x)_i = \hXa\left(0\right)^j \int_{{\mathbb R}^3} B_\alpha(y)^6 {\mathcal H}_{ij}\left(x,y\right)\, dy\hskip.1cm,
\end{equation}
where 
$${\mathcal H}_{ij}\left(x,y\right)= \frac{1}{32\pi} \left(\frac{7\delta_{ij}}{\vert x-y\vert}+\frac{\left(x-y\right)_{i}\left(x-y\right)_j}{\vert x-y\vert^3}\right)\hskip.1cm.$$
 We have that 
\begin{equation}\label{eq-lba-13}
\overrightarrow{\Delta_\xi} V_\alpha = B_\alpha^6 \hXa(0)\hbox{ in }{\mathbb R}^3\hskip.1cm.
\end{equation}
We refer here to (\ref{convolformes}) in section \ref{fundsol}. We also let in the following 
\begin{equation}\label{eq-lba-13bis}
\eps_\alpha = \left\vert \hat{X}_\alpha(0)\right\vert
\end{equation}
and, if $\eps_\alpha\neq 0$, 
\begin{equation}\label{eq-lba-13bisbis}
\zeta= \lim_{\alpha\to +\infty}\frac{\hat{X}_\alpha(0)}{\eps_\alpha}
\end{equation}
which is a vector in ${\mathbb R}^3$ of norm $1$. Then direct computations give that 
\begin{equation}\label{eq-lba13ter}
\left\vert {\mathcal L}_\xi V_\alpha\right\vert_\xi (x)\le C_5 \frac{\eps_\alpha}{\mu_\alpha^2 +\vert x\vert^2}
\end{equation}
for all $x\in B_0\left(8r_\alpha\right)$ for some $C_5>0$ independent of $x$ and $\alpha$ and that, if $\eps_\alpha\neq 0$, 
\begin{equation}\label{eq-lba13-4}
\frac{r_\alpha^2}{\eps_\alpha} {\mathcal L}_\xi V_\alpha\left(r_\alpha x\right) \to  P_{ij}\hbox{ in } C^0_{loc}\left(B_0(4)\setminus\left\{0\right\}\right)\hbox{ as }\alpha\to +\infty\hskip.1cm,
\end{equation}
where 
\begin{equation}\label{eq-lba-13-5}
P_{ij}(x)=\frac{3\pi}{8}\left(\frac{3}{4f_0\left(x_0\right)}\right)^{\frac{3}{2}}  \left\vert x\right\vert^{-3} \left(\zeta_k x^k \left(\delta_{ij}-\frac{x_i x_j}{\vert x\vert^2}\right) - x_i \zeta_j-x_j \zeta_i  \right)\hskip.1cm.
\end{equation}

\begin{claim}\label{claim-lba4}
There exists $C_6>0$ such that 
$$\left\vert {\mathcal L}_\xi \hWa\right\vert_\xi (x)\le C_6 \frac{\mu_\alpha^2}{r_\alpha^3\left(\mu_\alpha^2+\vert x\vert^2\right)}$$
for all $x\in B_0\left(2r_\alpha\right)$ and all $\alpha$.
\end{claim}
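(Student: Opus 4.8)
The plan is to obtain the pointwise decay estimate for $\left\vert {\mathcal L}_\xi \hWa\right\vert_\xi$ by feeding the pointwise bound $\va\le C_3 B_\alpha$ of Claim \ref{claim-lba3} into the Green representation formula for the vector equation, see Proposition \ref{vecteurGreen}, applied on a ball $B_0(r_\alpha)$ (or on $B_y(\theta r_\alpha)$ for a well-chosen radius). First I would rewrite the second equation of \eqref{eq-lba-5} in the convolution form: on $B_0(2r_\alpha)$ one has $\left(\overrightarrow{\Delta_\xi}\hWa\right)_i = -\frac{6x^j}{1+|x|^2}\left({\mathcal L}_\xi \hWa\right)_{ij} + \va^6 (\hXa)_i + (\hYa)_i$, so that Proposition \ref{vecteurGreen} gives, for $x\in B_0(2r_\alpha)$,
\[
\left\vert {\mathcal L}_\xi \hWa\right\vert_\xi(x)\le C\int_{B_0(r_\alpha')}|x-y|^{-2}\Bigl(|x||{\mathcal L}_\xi\hWa|_\xi(y)+\va(y)^6+1\Bigr)\,dy + C\,(r_\alpha')^{-2}\int_{\partial B_0(r_\alpha')}|{\mathcal L}_\xi\hWa|_\xi\,d\sigma
\]
for an appropriate intermediate radius $r_\alpha'\in(2r_\alpha,7r_\alpha)$ chosen, as in the proofs of Claims \ref{claim3} and \ref{claim-lba1}, so that the boundary integral is controlled; here $\va(y)^6\le C_3^6 B_\alpha(y)^6$ by Claim \ref{claim-lba3}, and $B_\alpha(y)^6$ is explicit.

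The bulk of the argument is then a bootstrap/iteration on the exponent of decay. The convolution of $|x-y|^{-2}$ against $B_\alpha(y)^6$ is computed explicitly: since $B_\alpha(y)^6 \sim \mu_\alpha^3(\mu_\alpha^2+|y|^2)^{-3}$, one gets a bound of order $\mu_\alpha^3(\mu_\alpha^2+|x|^2)^{-1}$ (up to constants), which already has the shape of the right-hand side with $r_\alpha^3$ replaced by $\mu_\alpha^{-3}\cdot$(something). To upgrade the crude a priori bound $|{\mathcal L}_\xi\hWa|_\xi(x)\le C_2|x|^{-3}$ coming from \eqref{eq-lba-9} to the sharp $\frac{\mu_\alpha^2}{r_\alpha^3(\mu_\alpha^2+|x|^2)}$ one iterates: insert the current bound on $|{\mathcal L}_\xi\hWa|_\xi$ into the term $\int |x|\,|x-y|^{-2}|{\mathcal L}_\xi\hWa|_\xi(y)\,dy$, use that $|x|\le 2r_\alpha$ and $r_\alpha=O(\sqrt{\mu_\alpha})$ by \eqref{eq-lba-11bis} to gain smallness, and show the exponent improves at each step until it stabilizes. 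For the boundary term, the uniform bound $|{\mathcal L}_\xi\hWa|_\xi \le C_2 r_\alpha^{-3}$ on $\partial B_0(r_\alpha')$ (from \eqref{eq-lba-9}, since $r_\alpha'\sim r_\alpha$) contributes $C (r_\alpha')^{-2}\cdot (r_\alpha')^2 \cdot r_\alpha^{-3} = O(r_\alpha^{-3})$, which is of the claimed order on $|x|\le 2r_\alpha$ after noting $r_\alpha^{-3}\le \frac{\mu_\alpha^2}{r_\alpha^3(\mu_\alpha^2+|x|^2)}\cdot\frac{\mu_\alpha^2+|x|^2}{\mu_\alpha^2}$, so one must be a little careful and rather compare with $\mu_\alpha^2/(r_\alpha^3(\mu_\alpha^2+|x|^2))$ directly on the two regimes $|x|\lesssim\mu_\alpha$ and $|x|\gtrsim\mu_\alpha$.

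The main obstacle I expect is the $|x|\gtrsim\mu_\alpha$ regime, i.e.\ closing the iteration far from the concentration point, where the self-improving term $\int |x|\,|x-y|^{-2}|{\mathcal L}_\xi\hWa|_\xi(y)\,dy$ must be shown to be genuinely subordinate to $\mu_\alpha^2 r_\alpha^{-3}(\mu_\alpha^2+|x|^2)^{-1}$: one needs a scaling-invariant bound and the gain must come entirely from the smallness factor $r_\alpha^2/\mu_\alpha = O(1)$ combined with a constant strictly less than $1$ in the convolution estimate, so the iteration converges geometrically. A secondary technical point is handling the first term $-\frac{6x^j}{1+|x|^2}({\mathcal L}_\xi\hWa)_{ij}$ carefully, but since on $B_0(2r_\alpha)$ with $r_\alpha\to 0$ one has $\frac{|x|}{1+|x|^2}=O(r_\alpha)$, this term is harmless and absorbed into the same iteration. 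The lower-order pieces $\va(y)^6$ and the constant $1$ (from $\hYa$ and the commutator term) only contribute, after convolution with $|x-y|^{-2}$ over $B_0(O(r_\alpha))$, terms of size $O\bigl(\mu_\alpha^3(\mu_\alpha^2+|x|^2)^{-1}\bigr)$ and $O(r_\alpha^2)$ respectively, both dominated by the claimed bound using \eqref{eq-lba-11bis}. Once the geometric iteration is set up the conclusion follows, giving the constant $C_6$ depending only on $C_2,C_3$ and the $C^0$-bounds in \eqref{eqconv}.
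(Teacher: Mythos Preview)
Your plan has a genuine gap: it treats the vector equation in isolation and misses the crucial feedback from the \emph{scalar} equation, which is what actually forces the sharp bound.

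Concretely, the source term in the vector equation is $v_\alpha^6\hXa$, not just $v_\alpha^6$. A direct convolution computation gives
\[
\int_{B_0(O(r_\alpha))}|x-y|^{-2}B_\alpha(y)^6\,dy \;\sim\; (\mu_\alpha^2+|x|^2)^{-1},
\]
not $\mu_\alpha^3(\mu_\alpha^2+|x|^2)^{-1}$ as you wrote (you lost a factor $\mu_\alpha^{-3}$ in the scaling). Hence the contribution of $v_\alpha^6\hXa$ to $|{\mathcal L}_\xi\hWa|$ is of order $\eps_\alpha(\mu_\alpha^2+|x|^2)^{-1}$ with $\eps_\alpha=|\hXa(0)|$. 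This fits under the claimed bound only if $\eps_\alpha=O(\mu_\alpha^2 r_\alpha^{-3})$, and nothing in your scheme establishes that: a priori $\eps_\alpha\to |X_0(x_0)|$ which may be nonzero, while $\mu_\alpha^2 r_\alpha^{-3}$ can tend to $0$ (take $r_\alpha\sim\sqrt{\mu_\alpha}$, allowed by \eqref{eq-lba-11bis}). So the iteration cannot close.

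The same problem hits the boundary term. The crude bound $|{\mathcal L}_\xi\hWa|\le C_2 r_\alpha^{-3}$ on $\partial B_0(r_\alpha')$ gives a contribution $\sim r_\alpha^{-3}$, but at $|x|\sim r_\alpha$ the target bound is $\mu_\alpha^2 r_\alpha^{-5}$, and $r_\alpha^{-3}\le C\mu_\alpha^2 r_\alpha^{-5}$ would force $r_\alpha=O(\mu_\alpha)$, contradicting \eqref{eq-lba-11}. Your remark that ``one must be a little careful'' here is correct, but care alone does not suffice.

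The paper closes both gaps with an ingredient you do not use: Claim~\ref{claim-lba3bis}, the weighted $L^2$ estimate
\[
\int_{B_0(7r_\alpha)}|x-y|^{-1}\,|{\mathcal L}_\xi\hWa|_\xi^2\,B_\alpha^{-7}\,dy\le C_4 B_\alpha(x),
\]
which comes from the \emph{scalar} equation (the term $\haa v_\alpha^{-7}\ge C^{-1}|{\mathcal L}_\xi\hWa|^2 v_\alpha^{-7}$). The paper then subtracts the explicit $1$-form $V_\alpha$ with $\overrightarrow{\Delta_\xi}V_\alpha=B_\alpha^6\hXa(0)$, estimates ${\mathcal L}_\xi(\hWa-V_\alpha)$ via the Green formula with a boundary radius chosen by Claim~\ref{claim-lba3bis}, and finally uses Claim~\ref{claim-lba3bis} again on an annulus to deduce $\eps_\alpha=O(\mu_\alpha^2 r_\alpha^{-3})$ (equation \eqref{eq-lba4-10}). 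Only after this does the claimed pointwise bound follow. Your bootstrap on the vector equation alone cannot reproduce this, because the smallness of $\eps_\alpha$ is not an a priori fact but a consequence of the coupling.
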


\medskip {\bf Proof.} Let $z_\alpha\in B_0\left(4r_\alpha\right)$. Thanks to claim \ref{claim-lba3bis}, we know that 
$$\int_{B_0\left(6r_\alpha\right)\setminus B_0\left(5r_\alpha\right)} \left\vert z_\alpha - y\right\vert^{-1} \left\vert {\mathcal L}_\xi \hWa\right\vert_\xi^2(y)B_\alpha(y)^{-7}\, dy \le C_4B_\alpha\left(z_\alpha\right)\hskip.1cm.$$
This leads to the existence of some $s_\alpha\in \left(5r_\alpha,6r_\alpha\right)$ and of some $D_1>0$ independent of $\alpha$ such that 
\begin{equation}\label{eq-lba4-1}
\int_{\partial B_0\left(s_\alpha\right)} \left\vert {\mathcal L}_\xi \hWa\right\vert_\xi^2(y)\, d\sigma(y)\le D_1 \mu_\alpha^{\frac{7}{2}}r_\alpha^{-7} B_\alpha\left(z_\alpha\right)\hskip.1cm.
\end{equation} 
Thanks to the Green representation formula in $B_0\left(s_\alpha\right)$, see Proposition \ref{vecteurGreen}, there exists $D_2>0$ such that 
\begincal
\left\vert {\mathcal L}_\xi \left(\hWa-V_\alpha\right)\right\vert_\xi\left(z_\alpha\right) &\le & D_2 \int_{B_0\left(6r_\alpha\right)} \left\vert z_\alpha- y\right\vert^{-2} \left\vert {\overrightarrow \Delta}\left(\hWa-V_\alpha\right)(y)\right\vert_\xi\, dy\\
&& + D_2 \int_{\partial B_0\left(s_\alpha\right)} \left\vert z_\alpha-y\right\vert^{-2} \left\vert {\mathcal L}_\xi\left(\hWa-V_\alpha\right)\right\vert_\xi (y)\, d\sigma(y)\hskip.1cm.
\fincal
The boundary term can be estimated thanks to (\ref{eq-lba13ter}) and (\ref{eq-lba4-1}). We obtain that 
\begincal
&& \int_{\partial B_0\left(s_\alpha\right)} \left\vert z_\alpha-y\right\vert^{-2} \left\vert {\mathcal L}_\xi\left(\hWa-V_\alpha\right)\right\vert_\xi (y)\, d\sigma(y)\\
&&\quad \le D_3\left(\frac{\eps_\alpha}{r_\alpha^2} + \mu_\alpha^{\frac{7}{4}} r_\alpha^{-\frac{9}{2}} B_\alpha\left(z_\alpha\right)^{\frac{1}{2}}\right)
\fincal
for some constant $D_3>0$ independent of $\alpha$ so that we can write that 
\begincal
\left\vert {\mathcal L}_\xi \left(\hWa-V_\alpha\right)\right\vert_\xi\left(z_\alpha\right) &\le & D_2 \int_{B_0\left(6r_\alpha\right)} \left\vert z_\alpha- y\right\vert^{-2} \left\vert {\overrightarrow \Delta}\left(\hWa-V_\alpha\right)(y)\right\vert_\xi\, dy\\
&& + D_2  D_3\left(\frac{\eps_\alpha}{r_\alpha^2} + \mu_\alpha^{\frac{7}{4}} r_\alpha^{-\frac{9}{2}} B_\alpha\left(z_\alpha\right)^{\frac{1}{2}}\right)\hskip.1cm.
\fincal
Using equations (\ref{eq-lba-5}) and (\ref{eq-lba-13}), we have that 
\begincal
\left\vert {\overrightarrow \Delta}\left(\hWa-V_\alpha\right)(y)\right\vert_\xi&\le &\frac{6\vert y\vert}{1+\vert y\vert^2} \left\vert {\mathcal L}_\xi \hWa\right\vert_\xi (y) + \left\vert \hYa\right\vert_\xi(y)\\
&& +\left\vert v_\alpha(y)^6 \hXa(y)-B_\alpha(y)^6\hXa(0)\right\vert_\xi\hskip.1cm.
\fincal
Using (\ref{eqconv}), this leads with the previous inequality to the existence of some $D_4>0$ such that 
\begin{equation}\label{eq-lba4-2}
\left\vert {\mathcal L}_\xi \left(\hWa-V_\alpha\right)\right\vert_\xi\left(z_\alpha\right) \le D_4\left(I_\alpha^1+I_\alpha^2+I_\alpha^3+I_\alpha^4 +\frac{\eps_\alpha}{r_\alpha^2} + \frac{\mu_\alpha^{\frac{7}{4}}}{ r_\alpha^{\frac{9}{2}}} B_\alpha\left(z_\alpha\right)^{\frac{1}{2}}\right),
\end{equation}
where 
\begin{equation}\label{eq-lba4-2bis}
\begin{array}{l}
{\ds I_\alpha^1 = \int_{B_0\left(6r_\alpha\right)} \left\vert z_\alpha-y\right\vert^{-2} \vert y\vert \left\vert {\mathcal L}_\xi \hWa(y)\right\vert_\xi\, dy\hskip.1cm,}\\
{\ds I_\alpha^2 = \int_{B_0\left(6r_\alpha\right)}\left\vert z_\alpha-y\right\vert^{-2}\, dy\hskip.1cm,}\\
{\ds I_\alpha^3 = \eps_\alpha \int_{B_0\left(6r_\alpha\right)}\left\vert z_\alpha-y\right\vert^{-2}\left\vert v_\alpha(y)^6-B_\alpha(y)^6\right\vert \, dy\hskip.1cm,}\\
{\ds I_\alpha^4 = \int_{B_0\left(6r_\alpha\right)}\left\vert z_\alpha-y\right\vert^{-2} \vert y\vert v_\alpha(y)^6\, dy\hskip.1cm.}
\end{array}
\end{equation}
We clearly have that 
\begin{equation}\label{eq-lba4-3}
I_\alpha^2\le D_5 r_\alpha\hskip.1cm.
\end{equation}
Using Claim \ref{claim-lba3}, we can write by direct computations that 
\begin{equation}\label{eq-lba4-4}
I_\alpha^4 \le D_6 \frac{\mu_\alpha}{\mu_\alpha^2 +\left\vert z_\alpha\right\vert^2}\hskip.1cm.
\end{equation}
Thanks to Claim \ref{claim-lba1}, there exists $R_\alpha\to +\infty$ such that 
$$\mu_\alpha^{\frac{1}{2}}\left\Vert v_\alpha-B_\alpha\right\Vert_{L^\infty\left(B_0\left(R_\alpha\mu_\alpha\right)\right)}\to 0\hbox{ as }\alpha\to +\infty\hskip.1cm.$$
Then we write using also Claim \ref{claim-lba3} that 
\begincal
I_\alpha^3 &\le & \eps_\alpha \int_{B_0\left(R_\alpha\mu_\alpha\right)} \left\vert z_\alpha-y\right\vert^{-2} \left\vert v_\alpha(y)^6-B_\alpha(y)^6\right\vert\, dy\\
&& + O\left(\eps_\alpha \int_{B_0\left(6r_\alpha\right)\setminus B_0\left(R_\alpha\mu_\alpha\right)} \left\vert z_\alpha-y\right\vert^{-2} B_\alpha(y)^6\, dy\right)\\
&= & o\left(\eps_\alpha \mu_\alpha^{-\frac{1}{2}}\int_{B_0\left(R_\alpha\mu_\alpha\right)} \left\vert z_\alpha-y\right\vert^{-2}B_\alpha(y)^5\, dy\right)\\
&&+O\left(\eps_\alpha \int_{B_0\left(6r_\alpha\right)\setminus B_0\left(R_\alpha\mu_\alpha\right)} \left\vert z_\alpha-y\right\vert^{-2} B_\alpha(y)^6\, dy\right)\hskip.1cm.
\fincal
Simple computations lead then to 
\begin{equation}\label{eq-lba4-5}
I_\alpha^3 = o\left(\frac{\eps_\alpha}{\mu_\alpha^2+\left\vert z_\alpha\right\vert^2}\right)\hskip.1cm.
\end{equation}
In order to estimate $I_\alpha^1$, we use H\"older's inequalities with exponents $4$ and $\frac{4}{3}$ to write that 
\begincal
I_\alpha^1&\le & \left(\int_{B_0\left(6r_\alpha\right)} \left\vert z_\alpha-y\right\vert^{-1} B_\alpha(y)^{-7} \left\vert {\mathcal L}_\xi\hWa\right\vert_\xi^2(y)\, dy\right)^{\frac{1}{4}}\\
&& \left(\int_{B_0\left(6r_\alpha\right)} \left\vert z_\alpha-y\right\vert^{-\frac{7}{3}}\left\vert y\right\vert^{\frac{4}{3}} B_\alpha(y)^{\frac{7}{3}}\left\vert {\mathcal L}_\xi\hWa\right\vert_\xi^{\frac{2}{3}}(y)\, dy\right)^{\frac{3}{4}}\hskip.1cm.
\fincal
Using Claim \ref{claim-lba1} and (\ref{eq-lba-9}), we can write that 
$$\left\vert {\mathcal L}_\xi \hWa(y)\right\vert_\xi \le D_7\left(\mu_\alpha^2+\vert y\vert^2\right)^{-\frac{3}{2}}$$
for all $y\in B_0\left(6r_\alpha\right)$. Using this and Claim \ref{claim-lba3bis}, we get that 
$$I_\alpha^1 \le D_8 B_\alpha\left(z_\alpha\right)^{\frac{1}{4}} \left(\mu_\alpha^{\frac{7}{6}}\int_{B_0\left(6r_\alpha\right)} \left\vert y\right\vert^{\frac{4}{3}} \left\vert z_\alpha-y\right\vert^{-\frac{7}{3}}\left(\mu_\alpha^2 +\vert y\vert^2\right)^{-\frac{13}{6}}\, dy\right)^{\frac{3}{4}}\hskip.1cm.$$
Simple computations lead then to 
\begin{equation}\label{eq-lba4-6}
I_\alpha^1 \le D_9 \frac{\mu_\alpha}{\mu_\alpha^2+\left\vert z_\alpha\right\vert^2} \left(\ln \frac{2\left(\mu_\alpha^2+\left\vert z_\alpha\right\vert^2\right)}{\mu_\alpha^2}\right)^{\frac{3}{4}}\hskip.1cm.
\end{equation}
Coming back to (\ref{eq-lba4-2}) with (\ref{eq-lba4-2bis}), (\ref{eq-lba4-3}), (\ref{eq-lba4-4}), (\ref{eq-lba4-5}), (\ref{eq-lba4-6}) but also with (\ref{eq-lba-11bis}) and (\ref{eq-lba13ter}), we deduce that 
\begin{equation}\label{eq-lba4-7}
\begin{array}{l}
{\ds \left\vert {\mathcal L}_\xi \hWa\right\vert_\xi\left(z_\alpha\right)\le D_{10} \left(\frac{\mu_\alpha}{\mu_\alpha^2+\left\vert z_\alpha\right\vert^2} \left(\ln \frac{2\left(\mu_\alpha^2+\left\vert z_\alpha\right\vert^2\right)}{\mu_\alpha^2}\right)^{\frac{3}{4}}\right.}\\
{\ds \qquad\qquad\qquad\qquad \left.+ \frac{\eps_\alpha}{\mu_\alpha^2+\left\vert z_\alpha\right\vert^2}+ \mu_\alpha^{\frac{7}{4}} r_\alpha^{-\frac{9}{2}} B_\alpha\left(z_\alpha\right)^{\frac{1}{2}}\right)\hskip.1cm.}
\end{array}
\end{equation}
Thanks to this estimate on $\hWa$ in $B_0\left(4r_\alpha\right)$, we can sharpen the estimate on $I_\alpha^1$ for $z_\alpha\in B_0\left(2r_\alpha\right)$. Indeed, we can write that 
\begincal
I_\alpha^1&=& O\left(\int_{B_0\left(4r_\alpha\right)} \left\vert z_\alpha-y\right\vert^{-2} \left\vert y\right\vert \frac{\mu_\alpha}{\mu_\alpha^2+\left\vert y\right\vert^2} \left(\ln \frac{2\left(\mu_\alpha^2+\left\vert y\right\vert^2\right)}{\mu_\alpha^2}\right)^{\frac{3}{4}}\, dy\right)\\
&&+ O\left(\mu_\alpha^2 r_\alpha^{-\frac{9}{2}}\int_{B_0\left(4r_\alpha\right)} \left\vert z_\alpha-y\right\vert^{-2} \left\vert y\right\vert \left(\mu_\alpha^2+\vert y\vert^2\right)^{-\frac{1}{4}}\, dy\right)\\
&&+ O\left(\eps_\alpha \int_{B_0\left(4r_\alpha\right)} \left\vert z_\alpha-y\right\vert^{-2} \left\vert y\right\vert\left(\mu_\alpha^2+\vert y\vert^2\right)^{-1}\, dy\right)\\
&&+ O\left(r_\alpha^{-1} \int_{B_0\left(6r_\alpha\right)\setminus B_0\left(4r_\alpha\right)} \left\vert {\mathcal L}_\xi \hWa\right\vert_\xi(y)\, dy\right)\hskip.1cm.
\fincal
Direct computations lead to 
\begincal
I_\alpha^1&= & o\left(\frac{\mu_\alpha}{\mu_\alpha^2 +\left\vert z_\alpha\right\vert^2}\right)+O\left(\mu_\alpha^2 r_\alpha^{-3}\right)+O\left(\eps_\alpha \ln \frac{r_\alpha}{\mu_\alpha}\right)\\
&& + O\left(r_\alpha^{-1} \int_{B_0\left(6r_\alpha\right)\setminus B_0\left(4r_\alpha\right)} \left\vert {\mathcal L}_\xi \hWa\right\vert_\xi(y)\, dy\right)\hskip.1cm.
\fincal
In order to estimate the last term, we apply Claim \ref{claim-lba3bis} for some $\vert x\vert=r_\alpha$ to finally obtain 
\begin{equation}\label{eq-lba4-8}
I_\alpha^1 = o\left(\frac{\mu_\alpha}{\mu_\alpha^2 +\left\vert z_\alpha\right\vert^2}\right)+O\left(\eps_\alpha \ln \frac{r_\alpha}{\mu_\alpha}\right)+O\left(\mu_\alpha^2 r_\alpha^{-3}\right)\hskip.1cm.
\end{equation}
Coming back to (\ref{eq-lba4-2}) with (\ref{eq-lba4-2bis}), (\ref{eq-lba4-3}), (\ref{eq-lba4-4}), (\ref{eq-lba4-5}), (\ref{eq-lba4-8}) but also with (\ref{eq-lba-11bis}), we deduce that 
\begin{equation}\label{eq-lba4-9}
\begin{array}{l}
{\ds \left\vert {\mathcal L}_\xi\left(\hWa-V_\alpha\right)\right\vert_\xi\left(z_\alpha\right)}\\
{\ds \quad \le D_{11}\left(\frac{\eps_\alpha}{r_\alpha^2} + \mu_\alpha^{\frac{7}{4}} r_\alpha^{-\frac{9}{2}} B_\alpha\left(z_\alpha\right)^{\frac{1}{2}}+\frac{\mu_\alpha}{\mu_\alpha^2+\left\vert z_\alpha\right\vert^2}\right) + o\left(\frac{\eps_\alpha}{\mu_\alpha^2+\left\vert z_\alpha\right\vert^2}\right) \hskip.1cm.}
\end{array}\end{equation}
We claim now that 
\begin{equation}\label{eq-lba4-10}
\eps_\alpha = O\left(\mu_\alpha^2 r_\alpha^{-3}\right)\hskip.1cm.
\end{equation}
Indeed, we can write thanks to Claim \ref{claim-lba3bis} applied to some $\vert x\vert = 3r_\alpha$ that, for any $\delta>0$,
\begincal
&&\int_{B_0\left(2\delta r_\alpha\right)\setminus B_0\left(\delta r_\alpha\right)} B_\alpha(y)^{-7} \left\vert {\mathcal L}_\xi V_\alpha\right\vert_\xi^2(y)\, dy \\
&&\quad \le D_{12} \mu_\alpha^{\frac{1}{2}}+2\int_{B_0\left(2\delta r_\alpha\right)\setminus B_0\left(\delta r_\alpha\right)} B_\alpha(y)^{-7} \left\vert {\mathcal L}_\xi \left(\hWa-V_\alpha\right)\right\vert_\xi^2(y)\, dy
\fincal
where $D_{12}$ is of course independent of $\alpha$ and $\delta$. This leads with (\ref{eq-lba4-9}) to 
\begincal
&&r_\alpha^3 \int_{B_0\left(2\delta\right)\setminus B_0\left(\delta\right)} B_\alpha\left(r_\alpha y\right)^{-7} \left\vert {\mathcal L}_\xi V_\alpha\right\vert_\xi^2\left(r_\alpha y\right)\, dy \\
&&\quad \le D_{13} \left(\mu_\alpha^{\frac{1}{2}} + \frac{\eps_\alpha^2}{r_\alpha^4}\int_{B_0\left(2\delta r_\alpha\right)\setminus B_0\left(\delta r_\alpha\right)} B_\alpha(y)^{-7}\, dy\right.\\
&&\qquad \left. +\mu_\alpha^{\frac{7}{2}} r_\alpha^{-9}\int_{B_0\left(2\delta r_\alpha\right)\setminus B_0\left(\delta r_\alpha\right)} B_\alpha(y)^{-6}\, dy+\int_{B_0\left(2\delta r_\alpha\right)\setminus B_0\left(\delta r_\alpha\right)} B_\alpha(y)^{-3}\, dy\right)\\
&&\qquad + o\left(\frac{\eps_\alpha^2}{\mu_\alpha^2} \int_{B_0\left(2\delta r_\alpha\right)\setminus B_0\left(\delta r_\alpha\right)} B_\alpha(y)^{-3}\, dy\right)\hskip.1cm,
\fincal
where $D_{13}$ is independent of $\alpha$ and $\delta$. After simple computations, this gives using (\ref{eq-lba-11bis}) that 
$$r_\alpha^3 \int_{B_0\left(2\delta\right)\setminus B_0\left(\delta\right)} B_\alpha\left(r_\alpha y\right)^{-7} \left\vert {\mathcal L}_\xi V_\alpha\right\vert_\xi^2\left(r_\alpha y\right)\, dy 
 \le D_{14} \left(\mu_\alpha^{\frac{1}{2}}\left(1+\delta^9\right)+\delta^{10} \eps_\alpha^2 r_\alpha^6 \mu_\alpha^{-\frac{7}{2}}\right) $$
for some $D_{14}$ independent of $\alpha$ and $\delta$. Using now (\ref{eq-lba13-4}) and (\ref{eq-lba-13-5}), we can write that 
$$
r_\alpha^3 \int_{B_0\left(2\delta\right)\setminus B_0\left(\delta\right)} B_\alpha\left(r_\alpha y\right)^{-7} \left\vert {\mathcal L}_\xi V_\alpha\right\vert_\xi^2\left(r_\alpha y\right)\, dy 
\ge D_{15} \eps_\alpha^2 r_\alpha^6\mu_\alpha^{-\frac{7}{2}}\delta^6$$
for some $D_{15}$ independent of $\alpha$ and $\delta$. Up to choose $\delta>0$ small enough, we thus obtain that 
$$ \eps_\alpha^2 r_\alpha^6\mu_\alpha^{-\frac{7}{2}} = O\left(\mu_\alpha^{\frac{1}{2}}\right)$$
which leads to (\ref{eq-lba4-10}). 

Coming back to (\ref{eq-lba4-9}) with (\ref{eq-lba4-10}) but also with (\ref{eq-lba13ter}), we obtain the claim. \hfill $\diamondsuit$

\medskip We set now, for $x\in B_0(4)$, 
\begincal 
\cva(x)&=&\mua^{-\frac{1}{2}}r_\alpha v_\alpha\left(r_\alpha x\right)\hskip.1cm,\\
\cWa(x)&=& \hWa\left(r_\alpha x\right)\hskip.1cm,\\
\cha(x)&=& \hha\left(r_\alpha x\right)\hskip.1cm,\\
\cfa(x)&=& \hfa\left(r_\alpha x\right)\hskip.1cm,\\
\caa(x)&=& r_\alpha^2\haa\left(r_\alpha x\right)\hskip.1cm,\\
\cXa(x)&=& \hXa\left(r_\alpha x\right)\hskip.1cm,\\
\cYa(x)&=& \hYa\left(r_\alpha x\right)\hskip.1cm,\\
\cVa(x)&=& V_\alpha\left(r_\alpha x\right)\hskip.1cm,\\
\cZa(x)&=& Z_\alpha\left(r_\alpha x\right)\hskip.1cm.\\
\fincal
The first line of system (\ref{eq-lba-5}) becomes
\begin{equation}\label{eq-lba-26}
\Delta_\xi \cva + r_\alpha^2 \cha \cva = \left(\frac{\mu_\alpha}{r_\alpha}\right)^2 \cfa \cva^5 + \frac{r_\alpha^8}{\mu_\alpha^4}\frac{\caa}{\cva^7}\hbox{ in }B_0(4)\hskip.1cm.
\end{equation}
We can now use (\ref{eqconv}), (\ref{eq-lba-4}) and (\ref{eq-lba-11bis}) together with Claims \ref{claim-lba3} and \ref{claim-lba4} to write that 
\begin{equation}\label{eq-lba-32}
\frac{r_\alpha^8}{\mu_\alpha^4}\frac{\caa}{\cva^7} \le C_7 \left(\frac{\mu_\alpha^2}{r_\alpha^2}+\vert x\vert^2\right)^{\frac{3}{2}}
\end{equation}
for all $x\in B_0(2)$. 

\medskip Thanks to Claims \ref{claim-lba3} and \ref{claim-lba2}, we also know that 
\begin{equation}\label{eq-lba-27}
C_3 \mua^{-\frac{1}{2}}r_\alpha B_\alpha\left(r_\alpha x\right)\le \cva(x)\le C_3\mua^{-\frac{1}{2}}r_\alpha B_\alpha\left(r_\alpha x\right)\le C_3 \sqrt{\frac{3}{2 f_\alpha\left(x_\alpha\right)}} \vert x\vert^{-1}
\end{equation}
in $B_0(4)\setminus \left\{0\right\}$. We also have thanks to the definition (\ref{eq-lba-10}) of $r_\alpha$ that 
\begin{equation}\label{eq-lba-28}
\cva(x)< \left(1+\eps\right)\mua^{-\frac{1}{2}}r_\alpha B_\alpha\left(r_\alpha x\right)\hbox{ in }B_0(1)
\end{equation}
and that, if $r_\alpha<\hpa$, there exists $z_\alpha\in \partial B_0(1)$ such that 
\begin{equation}\label{eq-lba-29}
\cva\left(z_\alpha\right) = \left(1+\eps\right)\mua^{-\frac{1}{2}}r_\alpha B_\alpha\left(r_\alpha z_\alpha\right)\hskip.1cm.
\end{equation}
By standard elliptic theory, using (\ref{eq-lba-11}), (\ref{eq-lba-26}), (\ref{eq-lba-32}) and (\ref{eq-lba-27}) , we obtain that after passing to a subsequence, since $\cva$ is uniformly bounded from below in every compact subset of $B_0(2)\setminus\left\{0\right\}$ thanks to (\ref{eq-lba-27}), 
\begin{equation}\label{eq-lba-33}
\cva \to \cv \hbox{ in }C^1_{loc}\left(B_0(2)\setminus \left\{0\right\}\right)\hbox{ as }\alpha\to +\infty\hskip.1cm.
\end{equation}
Moreover, it is easily checked thanks to (\ref{eq-lba-26}) that 
\begin{equation}\label{eq-lba-35}
\cv(x)= \frac{\lambda}{\vert x\vert} + \beta(x)\hskip.1cm,
\end{equation}
where $\beta\in C^1\left(B_0(2)\right)$ is some super-harmonic function and ${\ds \lambda= \left(\frac{2f_0\left(x_0\right)}{3}\right)^{-\frac{1}{2}}}$. Moreover, using Claim \ref{claim-lba2}, it is also easily checked that 
\begin{equation}\label{eq-lba-36}
\beta(x)\ge 0 \hbox{ in }B_0(2)\hskip.1cm.
\end{equation}
Now we clearly have that 
\begin{equation}\label{eq-lba-37}
\beta>0 \hbox{ in }B_0(2) \hbox{ if } r_\alpha<\hpa
\end{equation}
thanks to (\ref{eq-lba-29}) and to the fact that $\beta$ is super-harmonic.

\begin{claim}\label{claim-lba5}
We have that $\beta(0)=0$ so that $r_\alpha=\hpa$.
\end{claim}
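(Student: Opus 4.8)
The plan is to prove $\beta(0)=0$ by a Pohozaev-type identity for $\cva$ on small balls $B_0(\delta)$ centered at the origin, passing first to the limit $\alpha\to+\infty$ and then letting $\delta\to 0$. Granting $\beta(0)=0$: then $\beta$ is not positive at the interior point $0\in B_0(2)$, so \eqref{eq-lba-37} read contrapositively rules out $r_\alpha<\hpa$, and since $r_\alpha\le\hpa$ always holds this forces $r_\alpha=\hpa$. Concretely, fix $\delta\in(0,2)$. The first equation of \eqref{eq-lba-26} reads $\Delta_\xi\cva+r_\alpha^2\cha\cva=(\mua/r_\alpha)^2\cfa\cva^5+\frac{r_\alpha^8}{\mua^4}\caa\cva^{-7}$ and $\cva$ is smooth on $B_0(4)$, so I multiply it by $x\cdot\nabla\cva+\frac12\cva$ and integrate over $B_0(\delta)$. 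The left-hand side yields the boundary quantity
$$\mathcal Q_\delta(\cva)=\delta\int_{\partial B_0(\delta)}\left(\partial_\nu\cva\right)^2 d\sigma+\frac12\int_{\partial B_0(\delta)}\cva\,\partial_\nu\cva\, d\sigma-\frac\delta2\int_{\partial B_0(\delta)}\left|\nabla\cva\right|^2 d\sigma,$$
while the right-hand side is a sum of three bulk integrals, one for each of $r_\alpha^2\cha\cva$, $(\mua/r_\alpha)^2\cfa\cva^5$, $\frac{r_\alpha^8}{\mua^4}\caa\cva^{-7}$.

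Next I let $\alpha\to+\infty$ with $\delta$ fixed. From Claim \ref{claim-lba3} (see \eqref{eq-lba-27}) one has $\cva\le C\min\left(r_\alpha/\mua,\,|x|^{-1}\right)$ on $B_0(\delta)$, and standard interior elliptic estimates for the equation \eqref{eq-lba-26} give a matching bound for $|\nabla\cva|$; using these, the $\cha$-term is $o_\alpha(1)$ and the $\cfa$-term, after one integration by parts (which produces $\int(x\cdot\nabla\cfa)\cva^6$ with $\nabla\cfa=O(r_\alpha)$, plus a boundary term tamed because $\cva\to\cv$ uniformly on $\partial B_0(\delta)$), is also $o_\alpha(1)$. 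The $\caa$-term I estimate \emph{without} integrating by parts, invoking directly the decay \eqref{eq-lba-32}: splitting the integral at $|x|=\mua/r_\alpha$,
$$\left|\int_{B_0(\delta)}\left(x\cdot\nabla\cva+\frac12\cva\right)\frac{r_\alpha^8}{\mua^4}\caa\cva^{-7}\, dx\right|\le C\int_{B_0(\delta)}\left(|x|\,|\nabla\cva|+\cva\right)\max\left(\frac{\mua}{r_\alpha},|x|\right)^3 dx\le C\left(\delta^5+(\mua/r_\alpha)^5\right).$$
Since $\cva\to\cv$ in $C^1_{loc}\left(B_0(2)\setminus\{0\}\right)$ by \eqref{eq-lba-33}, the boundary quantity converges, $\mathcal Q_\delta(\cva)\to\mathcal Q_\delta(\cv)$, and therefore $\left|\mathcal Q_\delta(\cv)\right|\le C\delta^5$ for all small $\delta>0$, with $C$ independent of $\delta$.

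It remains to insert $\cv=\lambda|x|^{-1}+\beta$, $\lambda=\left(2f_0(x_0)/3\right)^{-1/2}>0$, into $\mathcal Q_\delta$. The contribution of the fundamental solution $\lambda|x|^{-1}$ is $0$; the $C^1$ term $\beta$ contributes $O(\delta^2)$; and the cross term, after the divergence theorem and the observation that $|\Delta\beta|\le C|x|^3$ on $B_0(2)$ (a consequence of \eqref{eq-lba-32} together with \eqref{eq-lba-33}), so that $\int_{B_0(\delta)}|\Delta\beta|=O(\delta^6)$, reduces to $-2\pi\lambda\cdot\frac{1}{4\pi\delta^2}\int_{\partial B_0(\delta)}\beta\, d\sigma+o_\delta(1)$, which tends to $-2\pi\lambda\beta(0)$ as $\delta\to0$ because $\beta$ is continuous at $0$. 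Combining this with the previous paragraph gives $2\pi\lambda\left|\beta(0)\right|\le C\delta^5+o_\delta(1)$ for all small $\delta$; letting $\delta\to0$ yields $\beta(0)=0$, and hence $r_\alpha=\hpa$ by \eqref{eq-lba-37} as explained above.

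I expect the main obstacle to be the control of the $\cva^{-7}$ term — the matter/constraint contribution carrying ${\mathcal L}_\xi\hWa$ — inside the Pohozaev identity. This is exactly the place where the sharp pointwise decay \eqref{eq-lba-32} of $\frac{r_\alpha^8}{\mua^4}\caa\cva^{-7}$ must be used in an essential way, and that estimate itself rests on the simultaneous blow-up analysis of $v_\alpha$ (Claim \ref{claim-lba3}) and of the $1$-form ${\mathcal L}_\xi\hWa$ (Claim \ref{claim-lba4}); without treating both equations of the system together, this term could not be shown to be negligible.
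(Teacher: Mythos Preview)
Your proof is correct and follows the same Poho\v{z}aev skeleton as the paper, but you handle the crucial $\caa\cva^{-7}$ term by a genuinely different and more direct route. The paper spends the bulk of its argument (equations \eqref{eq-lba5-4}--\eqref{eq-lba5-10}) establishing, via a Green representation for $x^k\partial_k\cva+\tfrac12\cva$, that this multiplier is \emph{nonpositive} on the annulus $B_0(\delta)\setminus B_0(\lambda\mua/r_\alpha)$; it then uses this sign together with $\caa\ge 0$ to simply drop the $\caa$ contribution outside the core, obtaining only the one-sided conclusion $\beta(0)\le 0$, which is then combined with \eqref{eq-lba-36}. You instead bound the $\caa$ integral in absolute value using \eqref{eq-lba-32} directly, together with the pointwise gradient control $|x|\,|\nabla\cva|+\cva\le C\min(r_\alpha/\mua,|x|^{-1})$ (obtained from Claim~\ref{claim-lba1} near the core and from scale-invariant interior elliptic estimates on dyadic annuli elsewhere). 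This yields the two-sided bound $|\mathcal Q_\delta(\cv)|\le C\delta^5$ and hence $\beta(0)=0$ without ever invoking \eqref{eq-lba-36}. What your approach buys is brevity: you avoid the Green-function asymptotics for the Poho\v{z}aev multiplier entirely. What the paper's approach buys is that it never needs a pointwise gradient estimate on $\cva$ in the intermediate regime; its sign argument is more robust in situations where \eqref{eq-lba-32} might be weaker (for instance if one only knew $\frac{r_\alpha^8}{\mua^4}\caa\cva^{-7}=O(1)$ rather than $O(|x|^3)$, the sign trick would still work while your direct estimate would not).
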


\medskip {\bf Proof.} - Let us apply the Poho\v{z}aev identity to $\cva$ in a ball $B_0\left(\delta\right)$. This reads as 
$$\int_{B_0\left(\delta\right)} \left(x^k\partial_k \cva+\frac{1}{2}\cva\right)\Delta_\xi \cva\, dx = \int_{\partial B_0\left(\delta\right)} \left(\frac{1}{2}\delta \left\vert \nabla\cva\right\vert_\xi^2 -\frac{1}{2} \cva\partial_\nu \cva-\delta\left(\partial_\nu \cva\right)^2\right)\, d\sigma\hskip.1cm.$$
Using (\ref{eq-lba-33}) and (\ref{eq-lba-35}), we obtain after simple computations that 
\begin{equation}\label{eq-lba5-1}
\lim_{\delta\to 0} \lim_{\alpha\to +\infty} \int_{B_0\left(\delta\right)} \left(x^k\partial_k \cva+\frac{1}{2}\cva\right)\Delta_\xi \cva\, dx = 2\pi \lambda \beta(0)\hskip.1cm.
\end{equation}
In order to estimate the left-hand side, we need some asymptotic of ${\ds x^k\partial_k \cva+\frac{1}{2}\cva}$ on the ball $B_0\left(\delta\right)$ for $\delta>0$ small enough. We have thanks to claim \ref{claim-lba1} that 
\begin{equation}\label{eq-lba5-2}
\frac{\mu_\alpha}{r_\alpha}\left(x^k \partial_k \cva +\frac{1}{2}\cva\right)\left(\frac{\mu_\alpha}{r_\alpha}\, \cdot\,\right)\to \frac{1}{2}\left(\frac{1}{2}+\frac{\vert x\vert^2}{\lambda^2}\right)^{-\frac{3}{2}}\left(\frac{1}{2}-\frac{\vert x\vert^2}{\lambda^2}\right)
\end{equation}
in $C^0_{loc}\left({\mathbb R}^3\right)$ as $\alpha\to +\infty$. We let now $\left(z_\alpha\right)$ be a sequence of points in $B_0\left(\delta\right)$ such that 
\begin{equation}\label{eq-lba5-3}
\frac{r_\alpha \vert z_\alpha\vert}{\mu_\alpha} \to +\infty\hbox{ as }\alpha\to +\infty
\end{equation}
and we write with the Green representation formula that 
$$\left(x^k\partial_k \cva+\frac{1}{2}\cva\right)\left(z_\alpha\right)=
\int_{B_0\left(1\right)} H^1\left(z_\alpha,x\right) \Delta_\xi \cva(x)\, dx+\int_{\partial B_0\left(1\right)} H^2(z_\alpha,x) \cva(x)d\sigma\hskip.1cm,$$
where 
$$H^1(z,x)=\frac{1}{8\pi}\left(\frac{\vert x\vert^2-\vert z\vert^2}{\left\vert x-z\right\vert^3}+\frac{\vert x\vert^2\vert z\vert^2-1}{\left\vert \vert z\vert x-\frac{z}{\vert z\vert}\right\vert^3}\right)$$
and 
$$H^2(z,x)=\frac{1}{8\pi \left\vert z-x\right\vert^5}\left(\vert z\vert^4-10\vert z\vert^2+1+4\left(1+\vert z\vert^2\right)\langle x,z\rangle\right)\hskip.1cm.$$
Thanks to (\ref{eq-lba-26}) and (\ref{eq-lba-33}), this leads to 
\begin{equation}\label{eq-lba5-4}
\begin{array}{rcl}
{\ds \left(x^k\partial_k \cva+\frac{1}{2}\cva\right)\left(z_\alpha\right)}&{\ds =}&{\ds \left(\frac{\mu_\alpha}{r_\alpha}\right)^2\int_{B_0\left(1\right)} H^1\left(z_\alpha,x\right) \cfa\cva^5\, dx}\\
\,&\, &{\ds -r_\alpha^2\int_{B_0\left(1\right)} H^1\left(z_\alpha,x\right) \cha \cva\, dx}\\
\, &\, &{\ds +\frac{r_\alpha^8}{\mu_\alpha^4}\int_{B_0\left(1\right)} H^1\left(z_\alpha,x\right) \caa \cva^{-7}\, dx}\\
\, &\, & {\ds +\int_{\partial B_0\left(1\right)} H^2\left(z_0,x\right) \cv(x)d\sigma+o(1)\hskip.1cm,}
\end{array}
\end{equation}
where ${\ds z_0=\lim_{\alpha\to +\infty}z_\alpha}$. Direct computations lead thanks to (\ref{eqconv}) and (\ref{eq-lba-27}) to 
\begin{equation}\label{eq-lba5-5}
\int_{B_0\left(1\right)} H^1\left(z_\alpha,x\right) \cha \cva\, dx = O\left(1\right)\hskip.1cm.
\end{equation}
Using (\ref{eq-lba-32}),  we obtain that 
\begin{equation}\label{eq-lba5-6}
\frac{r_\alpha^8}{\mu_\alpha^4}\int_{B_0\left(1\right)} H^1\left(z_\alpha,x\right) \caa \cva^{-7}\, dx \le D_2
\end{equation}
for some $D_2>0$ independent of $z_\alpha\in B_0\left(\delta\right)$. Let us take some $R_\alpha\to +\infty$ such that 
$$\frac{r_\alpha\vert z_\alpha\vert}{R_\alpha \mu_\alpha}\to +\infty\hbox{ as }\alpha\to +\infty$$
and such that 
$$\sup_{B_0\left(R_\alpha\frac{\mu_\alpha}{r_\alpha}\right)} \left\vert \frac{\cva}{\mu_\alpha^{-\frac{1}{2}}r_\alpha B_\alpha\left(r_\alpha x\right)}-1\right\vert \to 0\hbox{ as }\alpha\to +\infty\hskip.1cm.$$
Such a sequence $R_\alpha$ clearly exists thanks to claim \ref{claim-lba1} and to (\ref{eq-lba5-3}). Then we write thanks to (\ref{eqconv}) and to (\ref{eq-lba-27}) that 
$$ \left(\frac{\mu_\alpha}{r_\alpha}\right)^2\int_{B_0\left(1\right)\setminus B_0\left(R_\alpha\frac{\mu_\alpha}{r_\alpha}\right)} H^1\left(z_\alpha,x\right) \cfa\cva^5\, dx=o\left(\left\vert z_\alpha\right\vert^{-1}\right)\hskip.1cm.
$$
We can also write that 
$$\left(\frac{\mu_\alpha}{r_\alpha}\right)^2\int_{B_0\left(R_\alpha\frac{\mu_\alpha}{r_\alpha}\right)} H^1\left(z_\alpha,x\right) \cfa\cva^5\, dx =  -\frac{\lambda}{2\left\vert z_\alpha\right\vert}+o\left(\left\vert z_\alpha\right\vert^{-1}\right)$$
so that 
\begin{equation}\label{eq-lba5-7}
\left(\frac{\mu_\alpha}{r_\alpha}\right)^2\int_{B_0\left(1\right)} H^1\left(z_\alpha,x\right) \cfa\cva^5\, dx= -\frac{\lambda}{2\left\vert z_\alpha\right\vert}+o\left(\left\vert z_\alpha\right\vert^{-1}\right)\hskip.1cm.
\end{equation}
Coming back to (\ref{eq-lba5-4}) with (\ref{eq-lba5-5})-(\ref{eq-lba5-7}), we obtain that 
\begin{equation}\label{eq-lba5-8}
\left(x^k\partial_k \cva+\frac{1}{2}\cva\right)\left(z_\alpha\right)= -\frac{\lambda}{2\left\vert z_\alpha\right\vert}+o\left(\left\vert z_\alpha\right\vert^{-1}\right)+O(1)\hskip.1cm.
\end{equation}
And this holds for any sequence $\left(z_\alpha\right)$ in $B_0\left(\delta\right)$ satisfying (\ref{eq-lba5-3}). Thus, together with (\ref{eq-lba5-2}), we have obtained that for any sequence of points $\left(z_\alpha\right)$ in $B_0\left(\delta\right)$, 
\begin{equation}\label{eq-lba5-9}
\left(x^k\partial_k \cva+\frac{1}{2}\cva\right)\left(z_\alpha\right)= \frac{1}{2}\frac{r_\alpha}{\mu_\alpha} \frac{\frac{1}{2}-\frac{\vert z_\alpha\vert^2 r_\alpha^2}{\lambda^2 \mu_\alpha^2}}{\left(\frac{1}{2}+\frac{\vert z_\alpha\vert^2 r_\alpha^2}{\lambda^2 \mu_\alpha^2}\right)^{\frac{3}{2}}}+ O(1) + o\left(\left(\frac{\mu_\alpha}{r_\alpha}+\vert z_\alpha\vert\right)^{-1}\right)\hskip.1cm.
\end{equation}
In particular, there exists $\delta>0$ such that for $\alpha$ large, 
\begin{equation}\label{eq-lba5-10} 
x^k\partial_k \cva(x)+\frac{1}{2}\cva(x) \le 0 \hbox{ in }B_0\left(\delta\right)\setminus B_0\left(\lambda\frac{\mu_\alpha}{r_\alpha}\right)\hskip.1cm.
\end{equation}
Using now equation (\ref{eq-lba-26}), (\ref{eq-lba5-10}) and the fact that $\caa\ge 0$, we can write that 
\begincal
\int_{B_0\left(\delta\right)} \left(x^k\partial_k \cva+\frac{1}{2}\cva\right)\Delta_\xi \cva\, dx &\le &
\left(\frac{\mu_\alpha}{r_\alpha}\right)^2 \int_{B_0\left(\delta\right)} \left(x^k\partial_k \cva+\frac{1}{2}\cva\right)\cfa \cva^5\, dx\\
&&- r_\alpha^2\int_{B_0\left(\delta\right)} \left(x^k\partial_k \cva+\frac{1}{2}\cva\right)\cha \cva\, dx\\
&& + \frac{r_\alpha^8}{\mu_\alpha^4}\int_{B_0\left(\lambda\frac{\mu_\alpha}{r_\alpha}\right)} \left(x^k\partial_k \cva+\frac{1}{2}\cva\right)\caa \cva^{-7}\, dx\hskip.1cm.
\fincal
Let us write thanks to (\ref{eqconv}),  (\ref{eq-lba-11bis}), (\ref{eq-lba-27}) and (\ref{eq-lba5-9}) that
\begincal
&&\left(\frac{\mu_\alpha}{r_\alpha}\right)^2 \int_{B_0\left(\delta\right)} \left(x^k\partial_k \cva+\frac{1}{2}\cva\right)\cfa \cva^5\, dx \\
&&\quad =\cfa(0)\left(\frac{\mu_\alpha}{r_\alpha}\right)^2 \int_{B_0\left(\delta\right)} \left(x^k\partial_k \cva+\frac{1}{2}\cva\right) \cva^5\, dx\\
&&\qquad +\left(\frac{\mu_\alpha}{r_\alpha}\right)^2 \int_{B_0\left(\delta\right)} \left(x^k\partial_k \cva+\frac{1}{2}\cva\right)\left(\cfa-\cfa(0)\right) \cva^5\, dx\\
&&\quad = o(1)+ O\left(\left(\frac{\mu_\alpha}{r_\alpha}\right)^2 r_\alpha\int_{B_0\left(\delta\right)} \left\vert x^k\partial_k \cva+\frac{1}{2}\cva\right\vert \vert x\vert \cva^5\, dx\right)\\
&&\quad = o(1) \hskip.1cm.
\fincal
We also easily get that 
$$\int_{B_0\left(\delta\right)} \left(x^k\partial_k \cva+\frac{1}{2}\cva\right)\cha \cva\, dx = O(1)\hskip.1cm.$$
At last, using (\ref{eq-lba-27}) and (\ref{eq-lba5-9}), we obtain that 
$$\frac{r_\alpha^8}{\mu_\alpha^4}\int_{B_0\left(\lambda\frac{\mu_\alpha}{r_\alpha}\right)} \left(x^k\partial_k \cva+\frac{1}{2}\cva\right)\caa \cva^{-7}\, dx = O\left(\left(\frac{\mu_\alpha}{r_\alpha}\right)^5\right)\hskip.1cm.$$
Coming back to (\ref{eq-lba5-1}) with these last estimates, we obtain that $\beta(0)\le 0$. Thanks to (\ref{eq-lba-37}), this proves that $r_\alpha=\hpa$. This ends the proof of the claim. \hfill $\diamondsuit$

\medskip Now the results of this section clearly permit to prove Claim \ref{claim5}, combining the definition (\ref{eq-lba-10}) of $r_\alpha$ and Claims \ref{claim-lba2} and \ref{claim-lba5}.

\section{A Harnack inequality}\label{section-HI}

We prove in the following a Harnack inequality for solutions of Einstein-Lichnerowicz equation which was used throughout the paper~:

\begin{prop}\label{prop-HI}
Let $a,f,h$ be smooth functions on $\overline{B_0(2)}\subset {\mathbb R}^3$ and let $u\in C^2\left(B_0(2)\right)$ be a positive solution in $B_0(2)$ of 
$$\Delta_\xi u + h u = f u^5 + a u^{-7}\hskip.1cm.$$
We assume that 
\begin{itemize}
\item $u\in L^\infty\left(B_0(2)\right)$.
\item $f\ge 0$ and $a\ge 0$, $a\not\equiv 0$. 
\end{itemize}
Then there exists $C>0$ depending only on $\left\Vert h\right\Vert_{L^\infty\left(B_0(2)\right)}$, $\left\Vert a\right\Vert_{L^\infty\left(B_0(2)\right)}$, $\left\Vert f\right\Vert_{L^\infty\left(B_0(2)\right)}$  and $\left\Vert u\right\Vert_{L^\infty\left(B_0(2)\right)}$ such that
$$\sup_{B_0(1)} u\le C \inf_{B_0(1)} u\hskip.1cm.$$
\end{prop}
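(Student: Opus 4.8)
The plan is to prove the two halves of the Harnack inequality separately --- a lower bound for $\inf_{B_0(1)}u$ and a bound for $\sup_{B_0(1)}u$ --- and then chain them. Throughout I would use the sign conditions $f\ge 0$, $a\ge 0$, the a priori bound $u\le M:=\|u\|_{L^\infty(B_0(2))}$, and I abbreviate $\Lambda:=\|h\|_{L^\infty(B_0(2))}$; all estimates are carried out on the relatively compact balls $B_0(1)\subset B_0(3/2)\subset\subset B_0(2)$, where $u$ is smooth and positive.

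\emph{Lower bound.} Since $f,a\ge 0$ and $u>0$ one has $\Delta_\xi u+\Lambda u=fu^5+au^{-7}+(\Lambda-h)u\ge 0$, so $u$ is a positive supersolution of the uniformly elliptic operator $\Delta_\xi+\Lambda$, whose zeroth order coefficient is controlled by $\Lambda$ alone. The De Giorgi--Nash--Moser weak Harnack inequality then yields a constant $C_1=C_1(\Lambda)$ with
\[\|u\|_{L^2(B_0(3/2))}\le C_1\,\inf_{B_0(1)}u\]
(the exponent $2$ being admissible since $2<n/(n-2)=3$ here).

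\emph{Upper bound.} This is the delicate half, because $au^{-7}$ is not bounded by $u$ times a controlled coefficient and $u$ is therefore not a subsolution of any nice operator. The device I would use is the substitution $v:=u^8\in(0,M^8]$: multiplying the equation by $u^7$ and using $\Delta_\xi(u^8)=8u^7\Delta_\xi u-56u^6|\nabla u|^2$ together with $u^6|\nabla u|^2=\tfrac1{64}v^{-1}|\nabla v|^2$ gives the identity
\[\Delta_\xi v+\tfrac78\,v^{-1}|\nabla v|^2=8fv^{3/2}+8a-8hv,\]
in which the singular nonlinearity has become the \emph{bounded} term $8a$. Discarding the favourable gradient term and using $v^{3/2}=u^{12}\le M^4v$ we obtain $\Delta_\xi v\le\Lambda_0 v+8\|a\|_{L^\infty}$ with $\Lambda_0:=8(\|f\|_{L^\infty}M^4+\Lambda)$, so $v\ge 0$ is a subsolution of $\Delta_\xi-\Lambda_0$ with a bounded inhomogeneity. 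Moser's local boundedness estimate then gives $C_2=C_2(\Lambda_0)$ with $\sup_{B_0(1)}v\le C_2\big(\|v\|_{L^{1/4}(B_0(3/2))}+\|a\|_{L^\infty}\big)$; since $\|v\|_{L^{1/4}(B_0(3/2))}=\|u\|_{L^2(B_0(3/2))}^{8}$ and $\sup_{B_0(1)}v=(\sup_{B_0(1)}u)^8$, combining with the lower bound step yields
\[(\sup_{B_0(1)}u)^8\le C_2\big(C_1^8(\inf_{B_0(1)}u)^8+\|a\|_{L^\infty}\big).\]

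\emph{Conclusion, and where the difficulty lies.} If $(\inf_{B_0(1)}u)^8\ge\|a\|_{L^\infty}$ the last inequality immediately gives $\sup_{B_0(1)}u\le C\inf_{B_0(1)}u$. In the complementary regime it only gives $\sup_{B_0(1)}u\le(2C_2\|a\|_{L^\infty})^{1/8}$, i.e. an a priori bound on $u$ over $B_0(1)$ by a controlled constant, and one still has to exclude that $\inf_{B_0(1)}u$ is far below this bound. I expect this to be the main obstacle: it is precisely here that one must use that $u$ is a genuine solution, not merely a sub/supersolution, and exploit $u\le M$ --- for instance by running the preceding subsolution estimate on all interior balls $B_\rho(x_0)\subset B_0(3/2)$ and propagating the resulting lower bound through the weak Harnack inequality, or via a Poho\v{z}aev identity at a minimum point of $u$. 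Once this last regime is handled one obtains $\sup_{B_0(1)}u\le C\inf_{B_0(1)}u$ with $C$ depending only on $\|h\|_{L^\infty}$, $\|a\|_{L^\infty}$, $\|f\|_{L^\infty}$ and $\|u\|_{L^\infty}$, as claimed.
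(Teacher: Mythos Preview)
Your lower-bound step via the weak Harnack inequality is exactly what the paper does. The gap is in the upper bound, and you have identified it yourself: after the substitution $v=u^{8}$ you obtain
\[
(\sup_{B_0(1)}u)^{8}\le C_2\bigl(C_1^{8}(\inf_{B_0(1)}u)^{8}+\|a\|_{L^\infty}\bigr),
\]
and the additive $\|a\|_{L^\infty}$ cannot be absorbed when $\inf u$ is small. Your proposed fixes (covering by balls, a Poho\v{z}aev identity at the minimum) do not close this; the obstruction is structural. By treating $8a$ as a fixed inhomogeneity in the subsolution inequality for $v$, you have thrown away the information that this term came from $au^{-7}$ multiplied by $u^{7}$: you lose the coupling between the size of $u$ and the size of the source term, and no amount of iterating the resulting linear estimate will recover it.

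The paper's proof avoids this by running a \emph{sublinear} Moser iteration directly on $u$. One multiplies the equation by $\eta^{2}u^{k}$ for $k\ge 8$; the three right-hand terms $fu^{k+5}$, $au^{k-7}$, $hu^{k+1}$ are all bounded by $C(\,M,\|a\|,\|f\|,\|h\|\,)\,u^{k-7}$ using $u\le M$, and then H\"older gives $\int_{B_0(R)}u^{k-7}\le C\bigl(\int_{B_0(R)}u^{k+1}\bigr)^{(k-7)/(k+1)}$. With $\gamma=k+1$ this yields
\[
\|u\|_{L^{3\gamma}(B_0(r))}\le C^{1/\gamma}\Bigl(\frac{\gamma}{(R-r)^{2}}\Bigr)^{1/\gamma}\|u\|_{L^{\gamma}(B_0(R))}^{\,1-8/\gamma},
\]
a \emph{strictly sublinear} step. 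Iterating gives $\|u\|_{L^\infty(B_0(r))}\le C(2-r)^{-3/\gamma}\|u\|_{L^\gamma(B_0(2))}^{\alpha}$ with $\alpha=\prod_{j\ge0}(1-8/3^{j}\gamma)<1$. Interpolating $\|u\|_{L^\gamma}$ between $\|u\|_{L^p}$ and $\|u\|_{L^\infty}$, using Young's inequality, and choosing $\gamma$ large enough so that $\alpha$ is close to $1$, one reaches
\[
\|u\|_{L^\infty(B_0(r))}\le\tfrac{2}{3}\,\|u\|_{L^\infty(B_0(2))}+C(2-r)^{-\beta}\|u\|_{L^p(B_0(2))},
\]
and the absorption lemma (Han--Lin, Lemma~4.3) turns this into $\sup_{B_0(1)}u\le C\|u\|_{L^p(B_0(2))}$, with no additive constant. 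Combined with the weak Harnack lower bound, this gives the Harnack inequality. The point is that the sublinear exponent $(\gamma-8)/\gamma$ is precisely what carries the $a$-term through the iteration multiplicatively rather than additively; your black-box application of the local maximum principle to $v$ cannot see this.
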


\medskip The proof follows the standard Nash-Moser iterative scheme. However, we have to deal with an additional difficulty due to the negative power nonlinearity of $u$ that makes the proof more involved. In particular, unlike the standard Harnack inequality, proposition \ref{prop-HI} is no longer an \emph{a priori} estimate and does not induce a control on the $L^\infty$-norm of $\nabla u$ in $B_0(1)$.

\medskip  {\bf Proof} - First of all, since $u$, $a$ and $f$ are nonnegative there holds
\[  \triangle_\xi u + hu \ge 0 \]
in $B_0(2)$. Hence Theorem $8.18$ in Gilbarg-Trudinger \cite{GilTru} applies and shows that for any $1 \le p < 3$, there exists $C_1(h,p)$ depending only on $\Vert h\Vert_{L^\infty(B_0(2))}$ and $p$ such that
\begin{equation} \label{Harnackminor}
\inf_{B_0(1)} u \ge C_1(h,p) \Vert u \Vert_{L^p(B_0(2))}.
\end{equation}

\noindent We now aim at proving that for any $p \ge 1$ there exist $C = C(a,h,f,u,p)$ such that
\[ \sup_{B_0(1)} u \le C \Vert u \Vert_{L^p(B_0(2))}\]
which together with \eqref{Harnackminor} will conclude the proof of the proposition. We adapt the steps of the proof of Theorem $4.1$ in Han-Lin \cite{HanLin}. Let $k \ge 8$ be given and $\eta \in C^\infty_c(B_0(2))$ be a smooth positive function with compact support in $B_0(2)$. Multiplying the equation satisfied by $u$ by $\eta^2 u^k$ and integrating yields 
\begincal
 \frac{4(k-1)}{(k+1)^2}  \int_{B_0(2)} \left | \nabla u^{\frac{k+1}{2}}\right |^2 \eta^2 dx &\le& \int_{B_0(2)} \left(  \eta^2 f u^{k+5}  + a \eta^2 u^{k-7} - \eta^2 h u^{k+1}\right)\, dx\\
&&+ \int_{B_0(2)} |\nabla \eta|^2 u^{k+1} \,dx \hskip.1cm.
\fincal
Let $0 < r < R \le 2$. Assume that $\eta$ is compactly supported in $B_0(R)$, is equal to $1$ in $B_0(r)$ and that it satisfies $| \eta | \le 1$ and $| \nabla \eta | \le \frac{2}{R-r}$ in $B_0(2)$. It is then easily seen that there exists $C_2>0$, depending only on $\Vert h \Vert_{L^\infty(B_0(2))}$, $\Vert a \Vert_{L^\infty(B_0(2))}$, $\Vert f \Vert_{L^\infty(B_0(2))}$ and $\Vert u \Vert_{L^\infty(B_0(2))}$, such that
$$
 \int_{B_0(2)} \left|  \eta^2 f u^{k+5}  + a \eta^2 u^{k-7} - \eta^2 h u^{k+1} \right| dx \le C_2 \left( \int_{B_0(R)}  u^{k+1} dx \right)^{\frac{k-7}{k+1}} 
$$
and that
$$
\int_{B_0(2)}  |\nabla \eta|^2 u^{k+1} dx \le C_2 (R-r)^{-2} \left( \int_{B_0(R)} u^{k+1} dx \right)^{\frac{k-7}{k+1}}.
$$
Independently, Sobolev's inequality shows that 
\[ \int_{B_0(2)} (\eta u^{\frac{k+1}{2}})^6 \le K \left( \int_{B_0(R)} \left | \nabla( \eta u^{\frac{k+1}{2}} )\right|^2 dx \right)^3\]
for some $K>0$ which leads to 
$$
\left( \int_{B_0(2)} (\eta^\frac{2}{k+1} u )^{3(k+1)} \right)^{\frac{1}{3}} \le C_3  \left( \int_{B_0(R)} |\nabla \eta |^2 u^{k+1}dx + \int_{B_0(2)} \eta^2 | \nabla u^{\frac{k+1}{2}}|^2 dx \right)
$$
for some positive $C_3$. We now let $\gamma = k +1 \ge 9$. Combining the above estimates, we obtain that 
\begin{equation} \label{improvedHolder}
\Vert u \Vert_{L^{3 \gamma}(B_0(r))} \le C_4^{\frac{1}{\gamma}} \left( \frac{\gamma}{(R-r)^2}\right)^{\frac{1}{\gamma}} \Vert u \Vert^{\frac{\gamma-8}{\gamma}}_{L^\gamma(B_0(R))}
\end{equation}
for some positive constant $C_4$ depending only on $\Vert h \Vert_{L^\infty(B_0(2))}$, $\Vert a \Vert_{L^\infty(B_0(2))}$, $\Vert f \Vert_{L^\infty(B_0(2))}$ and $\Vert u \Vert_{L^\infty(B_0(2))}$. We now pick some $0 < r < 2$ and define two sequences $\gamma_i$ and $r_i$ by $\gamma_i = 3^i \gamma$ and $r_0 = 2, r_{i+1} = r_i - (2-r)2^{-i-1}$. Inequality \eqref{improvedHolder} then gives that, for any $i \ge 0$:
\[ \Vert u \Vert_{L^{\gamma_{i+1}}(B_0(r_{i+1}))} \le C_4^{\frac{1}{3^i \gamma}} \left(  \frac{2 \cdot 6^i \gamma}{(2-r)^2}\right)^{\frac{1}{3^i \gamma}} \Vert u \Vert_{L^{\gamma_i}(B_0(r_i))} ^{1 - \frac{8}{3^i \gamma}} \]
and we thus obtain that there exists 
some constant $C_6>0$ depending on $\Vert h \Vert_{L^\infty(B_0(2))}$, $\Vert a \Vert_{L^\infty(B_0(2))}$, $\Vert f \Vert_{L^\infty(B_0(2))}$ and $\Vert u \Vert_{L^\infty(B_0(2))}$ but which does not depend on $r$ nor on $\gamma$ such that
\[\Vert u \Vert_{L^{\gamma_i}(B_0(r_i))} \le \frac{C_6}{(2-r)^{\frac{3}{\gamma}}} \Vert u \Vert_{L^{\gamma}(B_0(2))}^{\alpha} \]
for all $i \ge 0$, where $\alpha = \alpha(\gamma) = \prod_{k=0}^\infty  \left( 1 - \frac{8}{3^k \gamma} \right)$. Passing to the limit as $i \to \infty$ we thus obtain:
\begin{equation} \label{presqueHarnack}
\Vert u \Vert_{L^{\infty}(B_0(r))} \le \frac{C_6}{(2-r)^{\frac{3}{\gamma}}} \Vert u \Vert_{L^{\gamma}(B_0(2))}^{\alpha} .
\end{equation}
 
\noindent To conclude the proof of Proposition \ref{prop-HI}, we need to improve estimate \eqref{presqueHarnack}. Let $1 \le p < \gamma$. We bound $ (2-r)^{-3} \Vert u \Vert_{L^{\gamma}(B_0(2))}^{\alpha}$ using Young's inequality with exponents $\frac{\gamma}{\gamma - p \alpha}$ and $\frac{\gamma}{p \alpha}$ and combine with  \eqref{presqueHarnack} to obtain, for any $\eps > 0$:
\begin{equation} \label{Harnacketapefin}
\Vert u \Vert_{L^{\infty}(B_0(r))} \le C_6 \eps^{\frac{\gamma}{\gamma - p \alpha}}  \Vert u \Vert_{L^\infty(B_0(2))}^{\frac{(\gamma-p)\alpha}{\gamma - p \alpha}} + \frac{C_6}{(2-r)^{\frac{3 }{p \alpha}}} \frac{p}{\gamma} \eps^{- \frac{\gamma}{p \alpha}} \left( \int_{B_0(2)} u^p dx \right)^{\frac{1}{p}} .
\end{equation}
It is easily seen that $\alpha(\gamma) \to 1$ as $\gamma \to \infty$. Hence, choosing $\eps = (2 C_6)^{-1}$ one can then pick $\gamma$ large enough (depending on $a,h,f$ and $u$) so as to have, in \eqref{Harnacketapefin}:
\[ \Vert u \Vert_{L^{\infty}(B_0(r))}  \le \frac{2}{3} \Vert u \Vert_{L^\infty(B_0(2))} + C_7 (2-r)^{-\beta} \Vert u \Vert_{L^p(B_0(2))}\hskip.1cm, \]
where $C_7$ and $\beta > 0$ only depend on $\Vert h \Vert_{L^\infty(B_0(2))}$, $\Vert a \Vert_{L^\infty(B_0(2))}$, $\Vert f \Vert_{L^\infty(B_0(2))}$ and $\Vert u \Vert_{L^\infty(B_0(2))}$. The conclusion follows using Lemma $4.3$ in Han-Lin \cite{HanLin}. \hfill $\diamondsuit$

\section{Standard elliptic theory for the vectorial Laplacian on $\mathbb{S}^3$ }\label{stddelltheory}

Equation $\Dx X = 0$ historically appeared in the formulation of mathematical linear elasticity and is sometimes referred to as the Lam\'e system. We deal in this subsection with several properties of the operator $\Dh$ on $(\mathbb{S}^3, h)$. It is a differential operator between sections of the cotangent bundle $T^* \mathbb{S}^3$. If $X$ is a $1$-form in $\mathbb{S}^3$, $\overrightarrow{\triangle_h} X$ writes in coordinates as:
\[ \overrightarrow{\triangle_h} X_i = \nabla^j \nabla_j X_i + \nabla^j \nabla_i X^j - \frac{2}{3} \nabla_i ( \mathrm{div}_h X) .\]
If we write formally $\overrightarrow{\triangle_h} X(x) = L(x, \nabla) X$ then the principal symbol of the operator $\overrightarrow{\triangle_h} $ at some point $x \in \mathbb{S}^3$ and for some $\xi \in T_x \mathbb{S}^3$ is given by the determinant of the map $L(x,\xi)$ seen as a linear endomorphism of $T_x^* \Sp$. Thus there holds
\begin{equation} \label{unifelliptic}
 |L(x,\xi) | = \frac{4}{3} | \xi |_h^6 
 \end{equation} 
which shows that $\overrightarrow{\triangle_h} $ is uniformly elliptic in $\mathbb{S}^3$. It also satisfies the so-called strong ellipticity condition (also called Legendre-Hadamard condition) since for any $x \in \mathbb{S}^3$ and any $\eta \in T_x^*\Sp$:
\begin{equation} \label{strongelliptic}
(L(x,\xi) \eta)_i \eta^i = |\xi|_h^2 |\eta|_h^2 + \frac{1}{3} |\langle \xi, \eta \rangle|_h^2 \ge |\xi|_h^2 |\eta|_h^2\hskip.1cm.
\end{equation}
Since $\mathbb{S}^3$ is closed, integrating by parts, one gets that, for any $1$-forms $X$ and $Y$,
\begin{equation} \label{ippDeltah}
 \int_{\mathbb{S}^3}  \langle \Dh X, Y \rangle_h dv_h = \frac{1}{2} \int_{\Sp} \langle \mathcal{L}_h X, \mathcal{L}_h Y \rangle_h dv_h \hskip.1cm.
 \end{equation}
 In particular, \eqref{ippDeltah} shows that $\Dh$ is self-adjoint on $H^1(M)$ (we still denote the Sobolev space of $1$-forms by $H^1(M)$ since no ambiguity will occur) and that there holds in $\Sp$
 \begin{equation} \label{Killingh}
 \Dh X = 0 \Longleftrightarrow \mathcal{L}_h X = 0
 \end{equation}
 for any $1$-form $X$. Fields of $1$-forms in $\Sp$ satisfying $\mathcal{L}_h X = 0$ are called conformal Killing $1$-forms and by \eqref{ippDeltah} and standard Fredholm theory the set of those $1$-forms is finite dimensional. With \eqref{strongelliptic}, \eqref{ippDeltah} and \eqref{Killingh} standard results of elliptic theory for elliptic operators acting on vector bundles on closed manifolds apply, see for instance Theorem $27$, Appendix H in Besse \cite{Besse}, or Theorem $5.20$ in Giaquinta-Martinazzi \cite{GiaquintaMartinazzi}. In particular, for $1$-forms which are $L^2$-orthogonal to the subspace of conformal Killing forms, we have the following estimates~: 

 \begin{prop} \label{stdellipticclosed}
 For any $p > 1$, there exist constants $C_1 = C_1(h,p)$ and $C_2 = C_2(h,p)$ depending only on $h$ and $p$ such that for any $1$-form $X$ in $\Sp$:
 \begin{equation} \label{regellipstd}
  \Vert X \Vert_{W^{2,p}(\Sp)} \le C_1 \Vert \Dh X\Vert_{L^p(\Sp)} + C_2 \Vert X \Vert_{L^1(\Sp)}. 
  \end{equation}
  If, in addition, $X$ satisfies
 \begin{equation} \label{Killingortho}
  \int_{\Sp} \langle X, K \rangle_h dv_h  = 0
  \end{equation}
 for all conformal Killing $1$-form $K$, then, for any $p > 1$, we can choose $C_2 = 0$ in \eqref{regellipstd}. 
 \end{prop}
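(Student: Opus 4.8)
The proof rests on the general $L^p$-theory for strongly elliptic systems acting on sections of vector bundles over closed manifolds, which applies to $\Dh$ thanks precisely to \eqref{unifelliptic}, \eqref{strongelliptic}, \eqref{ippDeltah} and \eqref{Killingh}. The plan is first to recall the a priori estimate: working in coordinate charts, the interior Agmon--Douglis--Nirenberg estimates for the Legendre--Hadamard elliptic operator $L(x,\nabla)$ (whose strong ellipticity is \eqref{strongelliptic}), glued together by a partition of unity on the compact manifold $\Sp$, yield for every $p>1$ a constant $C$ depending only on $h$ and $p$ such that
\[ \Vert X \Vert_{W^{2,p}(\Sp)} \le C \left( \Vert \Dh X \Vert_{L^p(\Sp)} + \Vert X \Vert_{L^p(\Sp)} \right) \]
for all $1$-forms $X \in W^{2,p}(\Sp)$. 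This is exactly what the quoted results of Besse \cite{Besse} (Theorem $27$, Appendix H) and Giaquinta--Martinazzi \cite{GiaquintaMartinazzi} (Theorem $5.20$) provide.

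The second step is to replace the lower-order $L^p$-norm on the right-hand side by the $L^1$-norm, giving \eqref{regellipstd}. I would argue by contradiction: if \eqref{regellipstd} failed for some $p>1$, there would exist $1$-forms $X_k$ with $\Vert X_k \Vert_{W^{2,p}(\Sp)} = 1$ and $\Vert \Dh X_k \Vert_{L^p(\Sp)} + \Vert X_k \Vert_{L^1(\Sp)} \to 0$. Since $\Sp$ is compact, $W^{2,p}(\Sp)$ embeds compactly into $L^p(\Sp)$ and into $L^1(\Sp)$, so after passing to a subsequence $X_k \to X$ in $L^p(\Sp)$; as $\Vert X_k \Vert_{L^1(\Sp)} \to 0$ we get $X = 0$. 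Applying the a priori estimate above to the differences $X_k - X_\ell$ shows that $(X_k)$ is Cauchy in $W^{2,p}(\Sp)$, hence $X_k \to X = 0$ in $W^{2,p}(\Sp)$, which contradicts $\Vert X_k \Vert_{W^{2,p}(\Sp)} = 1$. This establishes \eqref{regellipstd}.

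Finally, to obtain the estimate with $C_2 = 0$ under \eqref{Killingortho}, I would argue once more by contradiction on the closed subspace $E \subset W^{2,p}(\Sp)$ of $1$-forms that are $L^2$-orthogonal to every conformal Killing $1$-form. If no constant $C_1$ worked on $E$, there would be $X_k \in E$ with $\Vert X_k \Vert_{W^{2,p}(\Sp)} = 1$ and $\Vert \Dh X_k \Vert_{L^p(\Sp)} \to 0$. By compactness of $W^{2,p}(\Sp) \hookrightarrow L^1(\Sp)$, after extraction $X_k \to X$ in $L^1(\Sp)$; then \eqref{regellipstd} applied to $X_k - X_\ell$ gives $\Vert X_k - X_\ell \Vert_{W^{2,p}(\Sp)} \to 0$, so $X_k \to X$ in $W^{2,p}(\Sp)$ with $\Vert X \Vert_{W^{2,p}(\Sp)} = 1$, $\Dh X = 0$, and $X \in E$ since $E$ is closed. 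By \eqref{Killingh}, $\mathcal{L}_h X = 0$, so $X$ is a conformal Killing $1$-form which, being in $E$, is $L^2$-orthogonal to itself; hence $X = 0$, contradicting $\Vert X \Vert_{W^{2,p}(\Sp)} = 1$. Thus $C_2 = 0$ is admissible in \eqref{regellipstd} for $X$ satisfying \eqref{Killingortho}. The only genuinely delicate point is the first step, namely verifying that $\Dh$ falls within the scope of the elliptic systems machinery; but this verification has already been carried out in the excerpt through \eqref{unifelliptic}--\eqref{Killingh}, so the remaining content is the routine compactness argument above, used to upgrade an a priori estimate with an $L^p$ lower-order term to one with an $L^1$ term and to remove that term entirely on the orthogonal complement of the finite-dimensional kernel of $\mathcal{L}_h$.
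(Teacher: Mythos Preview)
Your proof is correct and follows precisely the approach the paper intends: the paper does not give a detailed argument at all, but simply invokes the standard elliptic theory for systems on closed manifolds via the same references (Besse, Appendix~H, Theorem~27; Giaquinta--Martinazzi, Theorem~5.20), having verified the ellipticity hypotheses in \eqref{unifelliptic}--\eqref{Killingh}. Your contradiction/compactness arguments to pass from the $L^p$ lower-order term to $L^1$ and then to $C_2=0$ on the orthogonal of conformal Killing forms are the routine way to deduce \eqref{regellipstd} from the cited results, and in fact supply more detail than the paper itself.
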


 \noindent It is in fact possible to fully describe the conformal Killing $1$-forms on $\mathbb{S}^3$ in terms of the conformal Killing forms in $\RR^3$.

 \begin{prop}\label{Killingsphere}
Let $P \in \Sp$. Then any conformal Killing $1$-form K in $\Sp$ is given by 
\begin{equation} \label{KillingS3}
Z = \pi_P^*(U^4 L),
\end{equation} 
where $\pi_P$ is the stereographic projection of pole $-P$, $U$ is as in \eqref{eqdefconffactor} and $L$ is some conformal Killing $1$-form in $\RR^3$, that is satisfying $\mathcal{L}_\xi L = 0$. 
 \end{prop}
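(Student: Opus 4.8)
The plan is to read off the classification directly from the conformal covariance of the conformal Killing operator recorded in \eqref{eqLgstereo}. First I would fix $P\in\Sp$ and let $K$ be an arbitrary conformal Killing $1$-form on $\Sp$, so that $\mathcal{L}_h K=0$ everywhere on $\Sp$. Since $\pi_P$ restricts to a diffeomorphism from $\Sp\setminus\{-P\}$ onto $\RR^3$, the $1$-form $K[P]=(\pi_P^{-1})^\star K$ is a well-defined smooth $1$-form on $\RR^3$ (notation of Section \ref{sectionnotations}), and I would set $L:=U^{-4}K[P]$, which is again smooth on $\RR^3$ because $U$ is smooth and strictly positive there.

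The key step is then to apply \eqref{eqLgstereo} with $W=K$, which gives $\pi_P^\star\bigl(U^4\,\mathcal{L}_\xi L\bigr)=\mathcal{L}_h K=0$ on $\Sp\setminus\{-P\}$. Because $\pi_P^\star$ is a bijection on $1$-forms (and $(2,0)$-tensors) over $\Sp\setminus\{-P\}$ and $U>0$ on $\RR^3$, this forces $\mathcal{L}_\xi L=0$ on all of $\RR^3$; that is, $L$ is a conformal Killing $1$-form of Euclidean space. Unwinding the definitions, $\pi_P^\star(U^4 L)=\pi_P^\star\bigl((\pi_P^{-1})^\star K\bigr)=K$ on $\Sp\setminus\{-P\}$, and since both sides are continuous $1$-forms on the whole of $\Sp$ while $\Sp\setminus\{-P\}$ is dense, the equality extends to all of $\Sp$. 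This is exactly \eqref{KillingS3}.

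I do not expect a serious obstacle here: the argument is essentially a change of variables plus the covariance identity. The only points requiring a little care are the smoothness of $L$ and of $\pi_P^\star(U^4L)$ across the pole $-P$ — which is automatic since one starts from a $1$-form that is smooth on the compact manifold $\Sp$ — and the bookkeeping of the various pullbacks in the notation of Section \ref{sectionnotations}. If desired, one could record the converse as well (for any Euclidean conformal Killing $1$-form $L$, the form $\pi_P^\star(U^4L)$ extends to a smooth conformal Killing $1$-form on $\Sp$), which follows by reading the same identity in the other direction together with the explicit polynomial form of the classical conformal Killing fields of $\RR^3$; this identifies the space of conformal Killing $1$-forms on $\Sp$ with the $10$-dimensional space of conformal Killing fields of $\RR^3$, consistently with $\dim\mathrm{SO}(4,1)=10$.
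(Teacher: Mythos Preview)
Your argument is correct and follows the same covariance route as the paper: both use \eqref{eqLgstereo} to translate $\mathcal{L}_h K=0$ into $\mathcal{L}_\xi(U^{-4}K[P])=0$, thereby producing the Euclidean conformal Killing form $L$. The paper dispatches this forward direction in one line and then devotes the bulk of its proof to the converse (showing that for an arbitrary Euclidean conformal Killing $L$ the form $\pi_P^\star(U^4L)$ extends across $-P$ as a smooth conformal Killing form on $\Sp$, via an $L^2$ bound, a weak $\overrightarrow{\Delta_h}Z=0$ computation, and elliptic regularity); you handle the extension to $-P$ differently---by starting from a globally smooth $K$ and invoking density---which is perfectly adequate for the direction actually stated in the proposition, and you correctly flag the converse as requiring the explicit polynomial classification \eqref{confclass}.
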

 
 \begin{proof}
Let $Z$ satisfy $\mathcal{L}_h Z = 0$ in $\Sp$. Using \eqref{eqLgstereo}, there holds $\mathcal{L}_\xi ( U^{-4} (\pi_P)_* Z) = 0$. Conversely, let $L$ be a conformal Killing $1$-form in $\RR^3$. These forms are classified (see for instance Chapter $1$ of Schottenloher \cite{Schottenloher}) and span a $10$-dimensional vector space. In particular $L$ has the following expression: 
\begin{equation} \label{confclass}
L(x)_i = 2 \langle b,x \rangle_\xi x_i - |x|^2 b_i + \lambda x_i + c_i + (\Omega x)_i\hskip.1cm,
\end{equation}
where $\lambda \in \RR$, $b,c \in \RR^3$ and $\Omega$ is a skew-symmetric matrix. Note that \eqref{confclass} actually describes any conformal Killing $1$-form on any open subset of $\RR^3$. Let $P \in \Sp$ be some arbitrary point and define $Z = \pi_P^*(U^4 L)$. This defines a smooth conformal $1$-form in $\Sp \backslash \{ -P \}$. We now show that $Z$ is in $L^2(\Sp)$ and actually satisfies $\Dh Z = 0$ in $\Sp$ in a weak sense. Using the regularity theory as in proposition \ref{stdellipticclosed} this will show that $Z$ is a smooth conformal Killing $1$-form in $\Sp$ and \eqref{Killingh} will show that $Z$ is conformal. First, $Z \in L^2 (\Sp)$ since using \eqref{confclass} there holds, for any $y \in \Sp \backslash \{-P \}$:
\[ |Z(y)|_{h(y)} = 
U^2(\pi_P(y)) |L|_\xi(\pi_P(y)) \le  C\hskip.1cm, \]
where $C$ is some constant depending only on $n$ and $h$. Then, if $X$ is some smooth $1$-form in $\Sp$, integrating by parts we get for any $\eps > 0$:
\[\int_{\Sp \backslash B^h_{-P}(\eps) }\langle Z,\Dh X \rangle dv_h = \int_{\partial B^h_{-P}(\eps)} \langle \mathcal{L}_h X, Z \otimes \nu \rangle d \sigma_h = o(1) \]
so that letting $\eps \to 0$ shows that $\Dh Z = 0$ in a weak sense in $\Sp$.
 \end{proof}

 \section{Fundamental solution of the Lam\'e-type system in $\RR^3$.}\label{fundsol}

We define, for $1 \le i \le 3$, a $1$-form $H_i$ in $\RR^3 \backslash \{ 0 \}$ by:
\begin{equation} \label{Solfond}
H_{i}(y)_j = H_{ij}(y) = \frac{1}{32 \pi} \left( 7 \frac{\delta_{ij}}{|y|} + \frac{y_i y_j}{|y|^3} \right)
\end{equation}
for any $y \neq 0$. Note that the matrices $(H_{ij}(y))_{ij}$ thus defined are symmetric: for any $y \neq 0$,
\begin{equation} \label{solfondsym}
H_{ij}(y) = H_{ji}(y).
\end{equation}
Let $X$ be a field of $1$-forms in $\RR^3$. Integrating by parts and using Stoke's formula it is easily seen that for any $R > 0$ and for any $x \in B_0(R)$ there holds:
\begin{equation} \label{ippsolfond}
\begin{aligned}
X_i(x) = \int_{B_0(R)} H_{ij}(x-y) \Dx X(y)^j dx + \int_{\partial B_0(R)} \mathcal{L}_\xi X(y)^{kl} \nu_k(y) H_{il}(x-y) d\sigma \\
- \int_{\partial B_0(R)} \mathcal{L}_{\xi} \big( H_i(x - \cdot) \big)(y)_{kl} \nu(y)^k X(y)^l d \sigma.
\end{aligned}
\end{equation}
This means in a distributional sense that 
\[ \Dx \big( H_i (x - \cdot) \big) = \delta_x e_i\hskip.1cm, \]
where $e_i$ is the $i$-th vector of the canonical basis and there holds, for any $1$-form $Y$: $\langle \delta_x e_i, Y \rangle = Y_i(x)$. Equivalently, if we write $H(x,y) = (H_{ij}(x-y))_{1 \le i,j \le 3}$, we get that
\begin{equation} \label{solfondvect}
 \Dx H (x, \cdot) = \delta_x \mathrm{Id}\hskip.1cm,
 \end{equation}
where $\Dx$ is now seen as a matrix of differential operators acting on a distribution-valued matrix. Note that  the standard results of distribution theory easily extend to distribution-valued matrices, see for instance Schwartz \cite{Schwartz}. 

\noindent If now $X$ is some smooth field of $1$-forms in $\RR^3$ we let, for any $R >0$ and any $x \in B_0(R)$,
\begin{equation} \label{Zi}
 \begin{array}{l}
{\ds Z_i(x) =  \int_{\partial B_0(R)}\mathcal{L}_\xi X(y)^{kl} \nu_k(y) H_{il}(x-y) d\sigma }\\
{\ds \qquad- \int_{\partial B_0(R)} \mathcal{L}_{\xi} \big( H_i(x - \cdot) \big)(y)_{kl} \nu(y)^k X(y)^l d \sigma\hskip.1cm.} 
\end{array}
 \end{equation}
This defines a smooth $1$-form in $B_0(R)$ which satisfies
\begin{equation} \label{harmonicZi}
\Dx Z = 0 \quad \mathrm{ in } \quad B_0(R)
\end{equation}
due to \eqref{ippsolfond} and \eqref{solfondvect}. Similarly, using again \eqref{solfondvect}, if $Y \in L^1(\RR^3)$ is a smooth $1$-form then the $1$-form defined by 
\[ W_i(x) = \int_{\RR^3} H_{ij}(x-y) Y^j (y) dy = (H \star Y)_i(x) \]
satisfies in a weak sense
\begin{equation} \label{convolformes}
\Dx W_i (x) = Y_i(x) \hskip.1cm.
\end{equation}

\section{Green function with Neumann boundary conditions for Lam\'e-type systems in $\RR^3$.}

The system (\ref{equationEalpha}) we are interested in in this article is invariant up to adding to $W_\alpha$ some conformal Killing $1$-form in $\Sp$. We exploit this invariance all along the article by noting that the only relevant quantity to investigate is $\mathcal{L}_h W_\alpha$ and not the vector field $W_\alpha$ in itself. In particular we use several times a Green identity for $\Dx$ with Neumann boundary conditions that is proven in what follows. We let 
\begin{equation} \label{Killingspace}
K_R = \{ X \in H^1(B_0(R)), \mathcal{L}_\xi X = 0\}
 \end{equation}
be the Kernel subspace associated to the Neumann problem for $\Dx$ in $B_0(R)$. The orthogonal subspace of $K_R$ in $B_0(R)$ is the set of forms $Y \in H^1(B_0(R))$ such that for any $K \in K_R:$
\[ \int_{B_0(R)} \langle Y, K \rangle_\xi dx = 0 \quad \mathrm{holds}. \]
Elements of $K_R$ are infinitesimal generators of conformal transformations of $B_0(R)$ and are classified, see \eqref{confclass}. In particular $K_R$ is finite dimensional, $\dim K_R = 10$, and it is spanned by smooth vector fields. Let $(K_j)_{j=1 \dots 10}$ be an orthonormal basis of $K_0(R)$ for the $L^2$-scalar product, that is
\[ \int_{B_0(R)} \langle K_l, K_p \rangle_\xi dx =  \delta_{lp} .\]
The following proposition states the existence of Green $1$-forms satisfying Neumann boundary conditions:

\begin{prop} \label{vecteurGreen}
For any $1 \le i \le 3$ and any $R > 0$ there exists a unique $G_{i,R}$ defined in $B_0(R)\times B_0(R) \backslash D$, where $D = \{ (x,x), x \in B_0(R) \}$, such that $G_{i,R}(x, \cdot)$ is orthogonal to $K_R$ for any $x \in B_0(R)$ and such that for any smooth $1$-form $X$ in $\overline{B_0(R)}$:
\begin{equation} \label{ippNeumann}
\begin{array}{l}
{\ds \big( X - \pi_R(X) \big)_i(x)  = \int_{B_0(R)} G_{i,R}(x,y)_j \Dx X(y)^j dx} \\
{\ds \qquad\qquad\qquad\qquad\quad+ \int_{\partial B_0(R)} \mathcal{L}_\xi X(y)^{kl} \nu_k(y) G_{i,R}(x,y)_l d\sigma \hskip.1cm,}
\end{array}\end{equation}
where $\pi_R(X)$ is the orthogonal projection of $X$ on $K_R$ given by:
\begin{equation} \label{projkil}
 \pi_R(X) = \sum_{j=1}^{10} \left(  \int_{B_0(R)} \langle K_j, X\rangle dx \right) K_j .
 \end{equation}
Moreover $G_{i,R}$ is continuous and continuously differentiable in each variable in $B_0(R) \times B_0(R) \backslash D$. Furthermore, if $K$ denotes any compact set in $B_0(R)$ and if we let 
\begin{equation} \label{rapportdomaine}
 \delta = \frac{d(K, \partial B_0(R))}{R} > 0
 \end{equation}
there holds:
\begin{equation} \label{dersolNeumann}
 |x-y| |\nabla G_{i,R}(x,y)| + |G_{i,R}(x,y)| \le C(\delta) |x-y|^{-1} 
 \end{equation}
for any $ x\in B_0(K)$ and any $y \in B_0(R)$, whether the derivative in \eqref{dersolNeumann} is taken with respect to $x$ or $y$, and where $C(\delta)$ is a positive constant that only depends on $\delta$ as in \eqref{rapportdomaine} (in particular it does not depend on $x$).
\end{prop}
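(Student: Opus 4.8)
The plan is to construct $G_{i,R}(x,\cdot)$ as a correction of the fundamental solution $H_i$ of section \ref{fundsol}, using the solvability of the Neumann problem for $\Dx$ on $B_0(R)$. First I would set up the variational framework: on the space of $H^1$ $1$-forms over $B_0(R)$ consider the bilinear form
\[ a(X,Y)=\frac12\int_{B_0(R)}\langle\mathcal{L}_\xi X,\mathcal{L}_\xi Y\rangle_\xi\,dx\hskip.1cm, \]
whose kernel is exactly $K_R$ as defined in \eqref{Killingspace} (this uses an integration by parts as in \eqref{ippsolfond} together with the strong ellipticity \eqref{strongelliptic}). A Korn-type inequality, valid here since the dimension $3$ is $\ge 3$, combined with a compactness argument, shows that $a$ is coercive on the $L^2$-orthogonal complement of $K_R$ in $H^1(B_0(R))$. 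Hence, for any $f\in L^2(B_0(R))$ and any boundary datum $g$ satisfying the compatibility relation obtained by testing \eqref{ippsolfond} against the elements of $K_R$, the Neumann problem $\Dx X = f$ in $B_0(R)$, $\mathcal{L}_\xi X^{kl}\nu_k = g$ on $\partial B_0(R)$, admits a unique weak solution $X$ that is $L^2$-orthogonal to $K_R$.

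Next I would fix $x\in B_0(R)$ and $1\le i\le 3$, pick a cutoff $\chi\in C^\infty_c(B_0(R))$ with $\chi\equiv 1$ near $x$, and observe by \eqref{solfondvect} that $\Dx_y\big(\chi\, H_i(x-\cdot)\big)=\delta_x e_i + E_i(x,\cdot)$, where $E_i(x,\cdot)=[\Dx,\chi]H_i(x-\cdot)$ is a smooth $1$-form, compactly supported in $B_0(R)$ away from $x$ and from $\partial B_0(R)$ (since $\nabla\chi$ is). I would then let $\Phi_i(x,\cdot)\in K_R^\perp$ solve the Neumann problem with right-hand side $E_i(x,\cdot)+\sum_{j=1}^{10}K_j(x)_i\,K_j$ and zero boundary datum --- the correction term $\sum_j K_j(x)_i K_j$ being precisely what makes the compatibility condition hold --- and set
\[ G_{i,R}(x,\cdot)=\chi\, H_i(x-\cdot)-\Phi_i(x,\cdot)-\pi_R\big(\chi\, H_i(x-\cdot)\big)\hskip.1cm, \]
with $\pi_R$ as in \eqref{projkil}. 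By construction $\Dx_y G_{i,R}(x,\cdot)=\delta_x e_i-\sum_j K_j(x)_i K_j$, the Neumann data of $G_{i,R}(x,\cdot)$ vanish, and $G_{i,R}(x,\cdot)$ is $L^2$-orthogonal to $K_R$. Pairing this distributional identity against a smooth $1$-form $X$ on $\overline{B_0(R)}$ and integrating by parts as in \eqref{ippsolfond} --- the boundary term carrying $\mathcal{L}_\xi G_{i,R}(x,\cdot)$ disappearing because of the homogeneous Neumann condition --- yields exactly the reproducing formula \eqref{ippNeumann}, once one notes that $\langle\delta_x e_i,X\rangle=X_i(x)$ and $\sum_j\big(\int_{B_0(R)}\langle K_j,X\rangle\,dx\big) K_j(x)_i=\pi_R(X)_i(x)$.

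Uniqueness then follows by testing \eqref{ippNeumann} for two such kernels against all smooth $X$: the difference $G_{i,R}(x,\cdot)-G'_{i,R}(x,\cdot)$ is a weak solution of the homogeneous Neumann problem, hence lies in $K_R$, and being orthogonal to $K_R$ it vanishes. For the regularity and for \eqref{dersolNeumann} I would use the decomposition above: the singular part $\chi\, H_i(x-\cdot)$ is explicit, with $|H_i(x-y)|\le C|x-y|^{-1}$ and $|\nabla_y H_i(x-y)|\le C|x-y|^{-2}$ by \eqref{Solfond}, while $\Phi_i(x,\cdot)$ solves a Neumann problem whose data are smooth near $\partial B_0(R)$ and depend smoothly on $x$; since $\Dx$ is strongly elliptic \eqref{strongelliptic}, interior and boundary Schauder estimates for the Lam\'e system give $\Phi_i(x,\cdot)\in C^{1,\theta}(\overline{B_0(R)})$ and joint continuity and $C^1$-regularity of $G_{i,R}$ on $B_0(R)\times B_0(R)\setminus D$. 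Finally, rescaling $B_0(R)$ to the unit ball, under which $H$ transforms homogeneously of degree $-1$, makes the Schauder bounds for $\Phi_i$ uniform for $x$ in the compact set $\{\,y:d(y,\partial B_0(1))\ge\delta\,\}$; combining $|\Phi_i|+|\nabla\Phi_i|\le C(\delta)$ there with the explicit bounds on $\chi H_i$ and using $|x-y|\le 2$ to absorb the bounded contributions into $|x-y|^{-1}$ yields \eqref{dersolNeumann}.

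I expect the regularity part to be the main obstacle: one has to verify that the conormal boundary operator $X\mapsto \mathcal{L}_\xi X^{kl}\nu_k$ satisfies the Lopatinski--Shapiro complementing condition relative to the strongly elliptic operator $\Dx$, so that the Agmon--Douglis--Nirenberg boundary Schauder estimates are available, and then to follow the scaling carefully in order to obtain \eqref{dersolNeumann} with a constant depending only on the ratio $\delta$ of \eqref{rapportdomaine}. A secondary point, needed already for the well-posedness of the Neumann problem, is the Korn-type coercivity of $a$ on $K_R^\perp$.
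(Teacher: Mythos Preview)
Your proposal is correct and follows the same overall strategy as the paper: correct the explicit fundamental solution $H_i$ of \eqref{Solfond} by solving an auxiliary Neumann problem for $\Dx$, check the reproducing formula \eqref{ippNeumann}, and get \eqref{dersolNeumann} from elliptic regularity for the correction term combined with a rescaling to the unit ball.

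The differences are purely technical. The paper works without a cutoff: it keeps the full $H_i(x-\cdot)$ on $B_0(R)$ and defines the corrector $U^R_{i,x}$ (their \eqref{Uix}) as the solution of a Neumann problem with smooth interior datum $-\sum_j (K_j)_i(x)K_j$ but \emph{nonzero} boundary datum $-\nu^k\mathcal{L}_\xi(H_i(x-\cdot))_{kl}$; the compatibility condition is read off directly from \eqref{ippsolfond} applied to $K\in K_R$. You instead localise $H_i$ by $\chi$, which moves the inhomogeneity to a smooth compactly supported commutator term $E_i(x,\cdot)$ and lets you take zero boundary data. Both decompositions yield the same $G_{i,R}$ by your uniqueness argument (which matches their Claim \ref{claimunicite}). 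For the $x$--derivative in \eqref{dersolNeumann}, the paper proves a symmetry relation $G_{i,R}(x,y)_j=G_{j,R}(y,x)_i-\pi_R({}^tG_i(\cdot,x))_j$ (their Claim \ref{symetrie}) and transfers the $y$--estimate to $x$, whereas you obtain it by differentiating the data $E_i(x,\cdot)$ and $K_j(x)_i$ in $x$ and invoking continuity of the solution map. Your route avoids the symmetry computation but requires fixing a single $\chi$ that is $\equiv 1$ on the whole compact $K$ so that the parameter dependence is manifestly smooth; the paper's route avoids the cutoff but needs the extra claim. The scaling argument and the appeal to Agmon--Douglis--Nirenberg boundary regularity for the complemented system are the same in both.
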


\begin{proof}

The proof of this proposition goes through a sequences of claims. The techniques used are strongly inspired from Robert \cite{RobWeb}.

\begin{claim} \label{claimexis}
Let $F$ and $G$ be smooth $1$-forms, in $B_0(R)$ and in $\partial B_0(R)$ respectively, satisfying:
\begin{equation} \label{compatibilite}
\int_{B_0(R)}  F_l K^l  d\xi + \int_{\partial B_0(R)} G_l K^l d\sigma= 0
\end{equation} 
for any $K \in K_R$, where $K_R$ is as in \eqref{Killingspace}.
Then there exists a unique smooth $1$-form $Z$ orthogonal to $K_R$ such that
\begin{equation} \label{eqmodele} \left \{ 
\begin{aligned}
& \Dx Z = F & \quad B_0(R) \\
& \nu^k \mathcal{L}_{\xi} Z_{kl} = G_l & \quad \partial B_0(R).\\
\end{aligned} \right. \end{equation}
\end{claim}

\begin{proof}
The existence and uniqueness of $Z$ is ensured by the Lax-Milgram theorem applied on the orthogonal complement of $K_R$ to the symmetric bilinear form
\[ B(X,Y) = \frac{1}{2} \int_{B_0(R)} \langle \mathcal{L}_\xi X, \mathcal{L}_\xi Y \rangle dx\]  
and to the linear  form:
\[ L(X) = \int_{B_0(R)} F_l X^l d\xi - \int_{\partial B_0(R)}  G_l X^l d\sigma.  \]
The coercivity of $B(X,X)$ on the orthogonal complement of $K_R$ follows from the definition of $K_R$ and is obtained via the direct method. We claim now that $Z$ is smooth in $B_0(R)$. This is a consequence of general elliptic regularity results up to the boundary for elliptic systems satisfying complementing boundary conditions, as stated in Agmon-Douglis-Nirenberg \cite{ADN2}. Due to \eqref{unifelliptic} and \eqref{strongelliptic} the problem \eqref{eqmodele} is complemented so that Theorem 10.5 in \cite{ADN2} applies and shows that $Z$ is smooth. 
\end{proof}
For any $1 \le i \le 3$ and any $x \in B_0(R)$ we let $U^R_{i,x}$ be the unique $1$-form in $H^1(\overline{B_0(R)})$, orthogonal to $K_R$, satisfying:
\begin{equation} \label{Uix}
\left \{ 
\begin{aligned}
& \Dx U^R_{i,x} = - \sum_{j=1}^{10} (K_j)_i(x) K_j & \quad B_0(R) \\
& \nu^k \mathcal{L}_{\xi} \big( U^R_{i,x}\big)_{kl} = - \nu^k \mathcal{L}_{\xi} \big( H_i(x- \cdot) \big)_{kl}  & \quad \partial B_0(R).\\
\end{aligned} \right.
\end{equation}
The existence and smoothness of $U^R_{i,x}$ is ensured by Claim \ref{claimexis}. Indeed, the compatibility condition \eqref{compatibilite} is satisfied by applying \eqref{ippsolfond} to any $K \in K_R$.

\medskip

\noindent We now let, for $x \neq y$:
\begin{equation} \label{solfondNeumann}
G_{i,R}(x,y) = H_i (x - y) + U^R_{i,x}(y) - \sum_{j=1}^{10} \left(  \int_{B_0(R)} \langle K_j, H_i(x- \cdot) \rangle dy \right) K_j(y).
\end{equation}
By construction, $G_{i,R}(x,\cdot)$ is a $1$-form defined in $B_0(R) \backslash \{ x\}$. It clearly belongs to $L^2(B_0(R))$, is orthogonal to $K_R$ and continuously differentiable in $B_0(R) \backslash \{ x\}$. Combining \eqref{ippsolfond} and \eqref{Uix} it is easily seen that \eqref{ippNeumann} holds. The next three claims aim at finishing the proof of the proposition. The first one is a uniqueness result. 

\begin{claim} \label{claimunicite}
Assume that for some $1 \le i \le 3$ and for some $x \in B_0(R)$ there exists a $1$-form $M_i$ in $L^1(B_0(R))$ such that for any $X \in C^2(\overline{B_0(R)})$ with $\mathcal{L}_\xi X_{kl} \nu^k = 0$ on $\partial B_0(R)$ there holds:
\begin{equation} \label{condunicite}
 \int_{B_0(R)} \langle M_i, \Dx X \rangle_\xi d\xi = \left( X - \pi(X)\right)_i(x) .
 \end{equation}
Then $G_{i,R}(x, \cdot) - M_i \in K_R$, where $K_R$ is as in \eqref{Killingspace}.

\begin{proof}
Let $F_i$ = $G_{i,R}(x, \cdot) - M_i$. Let $Y$ be a smooth $1$-form with compact support in $B_0(R)$. By Claim \ref{claimexis} there exists a smooth $1$-form $X$ in $B_0(R)$ such that $\Dx X = Y - \pi_R(Y)$ in $B_0(R)$ and $\mathcal{L}_\xi X_{kl} \nu^k = 0$ in $\partial B_0(R)$, where $\pi_R$ is as in \eqref{projkil}. Using \eqref{ippNeumann} and \eqref{condunicite} there holds:
\[ 0 =  \int_{B_0(R)} \langle F_i, \Dx X \rangle_\xi d\xi =   \int_{B_0(R)} \langle F_i, Y - \pi_R(Y) \rangle_\xi d\xi =  \int_{B_0(R)} \langle F_i - \pi_R(F_i), Y \rangle_\xi d\xi \]
by definition of $\pi_R$. Assume for a while that $F_i$ belongs to $L^p(B_0(R))$ for some $p > 1$. A density argument then shows that $F_i = \pi_R(F_i)$.

\noindent It thus remains to prove that $F_i \in L^p(B_0(R))$ for some $p > 1$. We only need to prove this for $M_i$. Let $p \in (1, 3)$ and define $q = \frac{p}{p-1}$. Let $Y$ be a smooth $1$-form compactly supported in $B_0(R)$. By Claim \ref{claimexis} there exists a smooth $1$-form $X$ in $B_0(R)$, orthogonal to $K_R$, such that $\Dx X = Y - \pi_R(Y)$ in $B_0(R)$ and $\mathcal{L}_\xi X_{kl} \nu^k = 0$ in $\partial B_0(R)$. Then the definition of $\pi_R$ and \eqref{condunicite} yield:
\begincal
  \int_{B_0(R)} \langle M_i - \pi_R(M_i), Y \rangle_\xi d\xi &=& \int_{B_0(R)} \langle M_i , Y - \pi_R(Y) \rangle_\xi d\xi \\
&=&  \int_{B_0(R)} \langle M_i ,  \Dx X \rangle_\xi d\xi \\
&=& X_i(x).
\fincal
Elliptic regularity results for complemented elliptic systems, as those stated in the proof of Claim \ref{claimexis}, show that there exists a constant $C$ only depending on $q$ such that $\Vert X \Vert_{W^{2,q}} \le C \Vert Y - \pi_R(Y) \Vert_{L^q}$ where we omit to say that these norms are taken on $B_0(R)$ for the sake of clarity. Since $q > \frac{3}{2}$ we thus obtain, using the Sobolev embedding of $W^{2,q}$ in $C^0(\overline{B_0(R)})$:
\[   \left | \int_{B_0(R)} \langle M_i - \pi_R(M_i), Y \rangle_\xi d\xi \right | \le C \Vert Y - \pi_R(Y) \Vert_{L^q} \le C \Vert Y \Vert_{L^q}.  \]
A density argument then shows that $M_i - \pi_R(M_i)$, and thus $M_i$, belongs to $L^p(B_0(R))$ for all $p \in (1, 3)$.
\end{proof}
\end{claim}

\noindent We now state some rescaling-invariance property of the Green $1$-forms $G_{i,R}$:
\begin{claim} \label{claimrescaling}
For any $R > 0$ there holds:
\begin{equation} \label{Greenrescaling}
G_{i,R}(x,y) = \frac{1}{R} G_{i,1} \left( \frac{x}{R}, \frac{y}{R} \right)
\end{equation} 
for any $x, y \in B_0(R)$.
\end{claim}

\begin{proof}
Let $Y$ be a smooth, compactly supported, $1$-form in $B_0(R)$. We define, in $B_0(1)$, $Y_R = Y(R \cdot)$, which is then compactly supported in $B_0(1)$ Let $x \in B_0(R)$ and $1 \le i \le 3$. Equation \eqref{ippNeumann} shows that
\begin{equation} \label{rescaleB1}
\left( Y_R - \pi_1(Y_R)\right)_i \left( \frac{x}{R} \right) = \int_{B_0(1)} \left \langle G_{i,1} \left( \frac{x}{R}, y \right), \Dx Y_R(y) \right \rangle_\xi d\xi.
\end{equation}
Since for any $y \in B_0(1)$ there holds $\Dx Y_R(y) = R^2 \Dx Y (Ry)$ we easily obtain: 
\[  \int_{B_0(1)} \left \langle G_{i,1} \left( \frac{x}{R}, y \right), \Dx Y_R(y) \right \rangle_\xi dy = \frac{1}{R} \int_{B_0(R)} \left \langle G_{i,1} \left( \frac{x}{R}, \frac{y}{R} \right) , \Dx Y (y) \right \rangle_\xi dy. \]
Let now $(L_j)_{1 \le j \le 10}$ be an orthonormal basis of $K_1$, where $K_1$ is as in \eqref{Killingspace}. Let $Z_j =  R^{- \frac{3}{2}} L_j \left( {\frac{x}{R}} \right)$ for any $1 \le j \le 10$ and any $x \in B_0(R)$. Then $(Z_j)_{1 \le j \le 10}$ is an orthonormal basis of $K_R$ since there holds
\[ \int_{B_0(R)} \langle Z_k, Z_l \rangle_\xi d\xi = \int_{B_0(R)} R^{-3} \langle L_k \left( \frac{y}{R}\right), L_l \left( \frac{y}{R}\right) \rangle_\xi dy = \int_{B_0(1)} \langle L_k, L_l \rangle_\xi d\xi = \delta_{kl}. \]
Hence one has, by definition of $\pi_R$:
\[ \begin{aligned}
\pi_R(Y) (x) & = \sum_{j = 1}^{10}   \left(  \int_{B_0(R)} \langle R^{- \frac{3}{2}} L_j \left( \frac{y}{R} \right), Y(y) \rangle dy \right) R^{- \frac{3}{2}} L_j \left(\frac{x}{R} \right)  \\
 & = \sum_{j = 1}^{10}   \left(  \int_{B_0(1)} \langle L_j (y), Y_R(y) \rangle dy \right) L_j \left( \frac{x}{R} \right)  = \pi_1(Y_R) \left( \frac{x}{R} \right). \\
 \end{aligned} \]
 In the end \eqref{rescaleB1} becomes:
 \[ \left( Y - \pi_R(Y) \right)_i(x) = \int_{B_0(R)} \left \langle \frac{1}{R}  G_{i,1} \left( \frac{x}{R}, \frac{y}{R} \right) , \Dx Y (y) \right \rangle_\xi dy.
 \]
Finally, $\frac{1}{R}  G_{i,1} \left( \frac{x}{R}, \frac{\cdot}{R} \right)$ is orthogonal to $K_R$: indeed for $1 \le j \le 10$ there holds:
 \begincal
\int_{B_0(R)} \left \langle \frac{1}{R}  G_{i,1} \left( \frac{x}{R}, \frac{y}{R}\right), R^{- \frac{3}{2}} L_j \left( \frac{y}{R} \right)  \right \rangle_\xi dy &=& R^{\frac{1}{2}} \int_{B_0(1)} \langle G_{i,1}( \frac{x}{R}, y), L_j(y) \rangle_\xi dy \\
&=& 0\hskip.1cm,
\fincal
where the last equality is true since $G_{i,1}( \frac{x}{R}, \cdot)$ is orthogonal to $K_1$ by definition. Using Claim \ref{claimunicite} we then obtain \eqref{Greenrescaling}. 
\end{proof}
\noindent The last ingredient of the proof is a symmetry property of $G_{i,R}$~:
\begin{claim} \label{symetrie}
For any $x,y \in B_0(R)$ there holds:
\begin{equation} \label{symGreen}
G_{i,R}(x,y)_j  =  G_{j,R}(y,x)_i - \pi_R \left( {}^t G_i(\cdot, x) \right)_j
 \end{equation}
where we have set:
\begin{equation} \label{greentranspo}
 {}^t G_i(\cdot, x)_j(y) = G_{j,R}(y,x)_i .
 \end{equation}
\end{claim}

\begin{proof}
Let $\Psi \in C^0(\overline{B_0(R)})$ be a $1$-form orthogonal to $K_R$. We define a $1$-form in $B_0(R)$ by
\begin{equation} \label{Hiform}
H_i(x) = \int_{B_0(R)} G_{j,R}(y,x)_i \Psi(y) ^j dy \hskip.1cm.
\end{equation}
By the explicit construction of $G_{j,R}$ in \eqref{solfondNeumann} it is easily seen that $H$ is continuous in $\overline{B_0(R)}$. Also, $H$ is orthogonal to the conformal Killing forms since by Fubini's theorem, for any $K \in K_R$, 
\begin{equation} \label{orthokil}
 \int_{B_0(R)} H_i(y) K^i (y) dy = \int_{B_0(R)} \Psi^j(z) \int_{B_0(R)} G_{j,R}(z,y)_i K^i(y) dy dz = 0 
 \end{equation}
since by construction $G_{j,R}(z, \cdot)$ is orthogonal to $K_R$ for any $z \in B_0(R)$. By Claim \ref{claimexis} and since $\Psi$ is orthogonal to $K_R$ we can let $F$ be the unique $C^1$ $1$-form in $B_0(R)$ orthogonal to $K_R$  satisfying $\Dx F = \Psi$ in $B_0(R)$ and $\mathcal{L}_\xi F_{kl} \nu^k = 0$ in $\partial B_0(R)$. Let also $\Phi$ be a smooth $1$-form such that $\mathcal{L}_\xi \Phi_{kl} \nu^k = 0$ on $\partial B_0(R)$. With Fubini's theorem, equation \eqref{ippNeumann} and using the properties of $\Psi$ and $\Phi$ there holds:
\[ \begin{aligned}
\int_{B_0(R)} H_i(y) \Dx \Phi^i(y) dy & = \int_{B_0(R)} \Psi^j (z) \int_{B_0(R)}   G_{j,R}(z,y)_i \Dx \Phi^i (y)  dy dz \\
& = \int_{B_0(R)} \Psi^j(z) \left( \Phi - \pi_R(\Phi)\right)_j (z) dz \\
& = \int_{B_0(R)}  \Psi^j(z) \Phi_j(z) dz \\
& = \int_{B_0(R)} \Phi_j(z) \Dx F^j (z) dz \\
& = \int_{B_0(R)} F_j(z) \Dx \Phi^j(z) dz \hskip.1cm,\\
\end{aligned} \]
where we integrated by parts to obtain the last inequality since the boundary terms vanish. In particular:
\[ \int_{B_0(R)} \left( H - F \right)_j(y) \Dx \Phi^j(y) dy = 0 \]
for any smooth $\Phi$ with $\mathcal{L}_\xi \Phi_{kl} \nu^k = 0$ on $\partial B_0(R)$. Note that by a density argument the above inequality remains true for $\Phi \in W^{2,p}(B_0(R))$ for any $p > 1$ and orthogonal to $K_R$. By construction $F$ is orthogonal to $K_R$ and thanks to \eqref{orthokil} so is $H$. By Claim \ref{claimexis} we can thus choose $\Phi$ to be the unique $1$-form orthogonal to $K_R$ satisfying $\Dx \Phi = F - H$ in $B_0(R)$ and $\mathcal{L}_\xi \Phi_{kl}\nu^k = 0$ in $\partial B_0(R)$ to obtain, with the above inequality, that $F = H$. Independently, using \eqref{ippNeumann} gives:
\[ F_i(x) = \int_{B_0(R)} G_{i,R}(x,y)_j \Psi^j(y) dy\]
so that
\begin{equation} \label{presquesym}
\int_{B_0(R)} \left( G_{j,R}(y,x)_i - G_{i,R}(x,y)_j \right) \Psi^j(y)dy = 0
\end{equation}
for any continuous, Killing-free $\Psi$. Let now $X$ be any smooth $1$-form in $B_0(R)$. Choose $\Psi = X - \pi_R(X)$. There holds
\begincal
&&\int_{B_0(R)} G_{j,R}(y,x)_i \pi_R(X)^j(y) dy \\ 
&&\quad = \sum_{p=1}^{10} \int_{B_0(R)} G_{j,R}(y,x)_i \left(  \int_{B_0(R)} \langle K_p(z), X(z)\rangle_\xi dz \right) K_p(y)^j dy   \\
&&\quad = \sum_{p=1}^{10} \int_{B_0(R)} X_l(z) \left(  \int_{B_0(R)} G_{j,R}(y,x)_i  K_p(y)^j dy \right)  K_p^l(z) dz \\
&&\quad = \sum_{p=1}^{10}  \int_{B_0(R)} X_l(z) \pi_R \left( {}^t G_i(\cdot, x) \right)^l(z) dz\hskip.1cm,
\fincal
where ${}^t G_i(\cdot, x)$ is as in \eqref{greentranspo}. Since $G_{i,R}(x, \cdot)$ has no conformal Killing part equation \eqref{presquesym} becomes:
\[ \int_{B_0(R)} \left( G_{j,R}(y,x)_i - G_{i,R}(x,y)_j -  \pi_R \left( {}^t G_i(\cdot, x) \right)_j(y) \right) X^j(y) dy = 0\]
for any smooth $1$-form $X$ and this concludes the proof of the claim.
\end{proof}

We are now able to end the proof of Proposition \ref{vecteurGreen}. Let $x \in B_0(1)$ and consider $U_{i,x}^1$ as defined in \eqref{Uix}. Since $\Dx$ is coercive on the orthogonal of $K_1$ we can use elliptic regularity results -- as those stated in the proof of Claim \ref{claimexis} -- to get that there exist positive constants $C_1, C_2$ that do not depend on $x$ such that
\begin{equation} \label{bornederUix}
 \Vert U_{i,x}^1 \Vert_{C^1(\overline{B_0(R)})} \le C_1 + C_2 \Vert \mathcal{L}_\xi H_i(x - \cdot), \nu \otimes \cdot \Vert_{C^1(\partial B_0(R))}\hskip.1cm,
 \end{equation}
where we have let $ ( \mathcal{L}_\xi H_i(x - \cdot), \nu \otimes \cdot)_l = \nu^k \mathcal{L}_{\xi} \big( H_i(x - \cdot) \big)_{kl}$. Let $K$ be some compact set in $B_0(1)$ and assume $x \in K$. It is easily seen by definition of $H_i$ as in \eqref{Solfond} that
\[ \Vert \mathcal{L}_\xi H_i(x - \cdot), \nu \otimes \cdot \Vert_{C^1(\partial B_0(R))} \le C_3 d \left(K, \partial B_0(1) \right)^{-2}\]
for some positive constant $C_3$ independent of $x$. By the definition of $G_{i,1}(x, \cdot)$ in \eqref{solfondNeumann} one therefore easily obtains that for $x \in K$ and $y \in B_0(1)$:
\begin{equation} \label{presquedersolNeumann}
  |x-y| |\nabla_y G_{i,1}(x,y)| + |G_{i,1}(x,y)| \le C(\delta) |x-y|^{-1}  \hskip.1cm,
  \end{equation}
where $\delta$ is as in \eqref{rapportdomaine}. This gives \eqref{dersolNeumann} when the derivative is taken with respect to $y$. The same estimate when the derivative is taken with respect to $x$ is obtained differentiating \eqref{symGreen} and combining with \eqref{presquedersolNeumann}. Finally, \eqref{dersolNeumann} for any positive $R$ is obtained combining \eqref{presquedersolNeumann} with Claim \ref{claimrescaling}.

\end{proof}

\bibliographystyle{amsplain}
\bibliography{biblio}

\end{document}